\def\@cite#1#2{[{{\bfseries #1}\if@tempswa , #2\fi}]}
\renewcommand{\section}{%
\@startsection{section}{1}{\z@}
{0.5truecm plus -1ex minus -.2ex}%
{1.0ex plus .2ex}{\bfseries\large}}
\def\@seccntformat#1{\csname the#1\endcsname.\ }
\numberwithin{equation}{section} 
\theoremstyle{theorem}
\newtheorem{thm}{Theorem}[section]
\newtheorem{lem}[thm]{Lemma}
\theoremstyle{definition}
\newtheorem{remark}{Remark}[section]
\newtheorem*{prth1.1}{Proof of Theorem 1.1 (existence part)}
\newtheorem*{prth1.1c}{Proof of Theorem 1.1 (continued)}
\newtheorem*{prth1.2}{Proof of Theorem 1.2}
\newtheorem*{prth1.3}{Proof of Theorem 1.3}
\newcommand{\ep}{\varepsilon}
\newcommand{\pa}{\partial}
\newcommand{\tmax}{T_{\rm max}}
\begin{document}
\footnote[0]
    {2010{\it Mathematics Subject Classification}\/. 
    Primary: 35B44; Secondary: 35K55, 92C17.
    }
\footnote[0]
    {{\it Key words and phrases}\/: 
    chemotaxis; logistic source; boundedness; finite-time blow-up}
\begin{center}
    \Large{{\bf Boundedness and finite-time blow-up in a quasilinear parabolic--elliptic chemotaxis system\\ with logistic source and nonlinear production}}
\end{center}
\vspace{5pt}
\begin{center}
    Yuya Tanaka
   \footnote[0]{
    E-mail: 
    {\tt yuya.tns.6308@gmail.com}
    }\\
    \vspace{12pt}
    Department of Mathematics, 
    Tokyo University of Science\\
    1-3, Kagurazaka, Shinjuku-ku, 
    Tokyo 162-8601, Japan\\
    \vspace{2pt}
\end{center}
\begin{center}    
    \small \today
\end{center}

\vspace{2pt}
\newenvironment{summary}
{\vspace{.5\baselineskip}\begin{list}{}{%
     \setlength{\baselineskip}{0.85\baselineskip}
     \setlength{\topsep}{0pt}
     \setlength{\leftmargin}{12mm}
     \setlength{\rightmargin}{12mm}
     \setlength{\listparindent}{0mm}
     \setlength{\itemindent}{\listparindent}
     \setlength{\parsep}{0pt}
     \item\relax}}{\end{list}\vspace{.5\baselineskip}}
\begin{summary}
{\footnotesize {\bf Abstract.}
This paper deals with the quasilinear parabolic--elliptic chemotaxis system with logistic source and nonlinear production,
\begin{equation*}
  \begin{cases}
    u_t=\nabla \cdot (D(u) \nabla u) - \nabla \cdot (S(u)\nabla v) 
          + \lambda u - \mu u^{\kappa},
    & x\in\Omega,\ t>0, 
  \\
    0=\Delta v - \overline{M_f}(t) + f(u),
    & x\in\Omega,\ t>0,
  \end{cases}
\end{equation*}
where $\lambda>0$, $\mu>0$, $\kappa>1$ and $\overline{M_f}(t):=\frac{1}{|\Omega|}\int_{\Omega} f(u(x,t))\,dx$, and $D$, $S$ and $f$ are functions generalizing the prototypes
  \begin{align*}
    D(u)=(u+1)^{m-1},\quad 
    S(u)=u(u+1)^{\alpha-1}\quad\mbox{and}\quad
    f(u)=u^\ell
  \end{align*}
with $m\in\mathbb{R}$, $\alpha>0$ and $\ell>0$.
In the case $m=\alpha=\ell=1$, Fuest (NoDEA Nonlinear Differential Equations Appl.; 2021; 28; 16) obtained conditions for $\kappa$ such that solutions blow up in finite time.
However, in the above system boundedness and finite-time blow-up of solutions have been not yet established.
This paper gives boundedness and finite-time blow-up under some conditions for $m$, $\alpha$, $\kappa$ and $\ell$.
} 
\end{summary}
\vspace{10pt}

\newpage
\section{Introduction}

In this paper we consider the following quasilinear parabolic--elliptic chemotaxis system with logistic source and nonlinear production:
\begin{equation}\label{JL}
  \begin{cases}
    u_t=\nabla \cdot (D(u) \nabla u) - \nabla \cdot (S(u)\nabla v) 
          + \lambda u - \mu u^{\kappa},
    & x\in\Omega,\ t>0, 
  \\
    0=\Delta v - \overline{M_f}(t) + f(u),
    & x\in\Omega,\ t>0,
  \\
    \nabla u \cdot \nu=\nabla v \cdot \nu=0,
    & x\in \pa\Omega,\ t>0,
  \\
    u(x,0)=u_0(x),
    & x\in\Omega,
  \end{cases}
\end{equation}
where $\Omega \subset \mathbb{R}^n\ (n\ge1)$ is a bounded domain with smooth boundary $\pa\Omega$;
$\lambda>0$, $\mu>0$ and $\kappa>1$; 
$D,S \in C^2([0,\infty))$ and $D(0)>0$;
$f\in \bigcup_{\beta\in(0,1)} C_{\rm loc}^\beta([0,\infty)) \cap C^1((0,\infty))$;
  \[
    \overline{M_f}(t):=\frac{1}{|\Omega|}\int_{\Omega} f(u(x,t))\,dx;
  \]
$\nu$ is the outward normal vector to $\pa\Omega$;
$u_0 \in \bigcup_{\beta\in(0,1)} C^\beta(\overline{\Omega})$ is nonnegative.

\medskip

The system \eqref{JL} describes a motion of cellular slime molds with chemotaxis, and the unknown function $u=u(x,t)$ donates the density of cells and the unknown function $v=v(x,t)$ represents the concentration of the chemical substance at place $x \in \Omega$ and time $t>0$.
This system is one of many types of the Keller--Segel system
\begin{equation}\label{KSsimple}
  \begin{cases}
    u_t=\Delta u - \nabla \cdot (u\nabla v),
    & x\in\Omega,\ t>0, 
  \\
    v_t=\Delta v - v + u,
    & x\in\Omega,\ t>0,
  \end{cases}
\end{equation}
which was proposed by Keller and Segel \cite{K-S}.
A number of variations of the original system \eqref{KSsimple} and related results for blow-up 
(in the radial setting) and boundedness are introduced in \cite{B-B-T-W,H-P,L-W}: 

\begin{itemize}
\item We first focus on the quasilinear Keller--Segel system,
\begin{equation*}
  \begin{cases}
    u_t=\nabla \cdot (D(u) \nabla u) - \nabla \cdot (S(u)\nabla v), 
    & x\in\Omega,\ t>0, 
  \\
    \tau v_t=\Delta v - v + f(u),
    & x\in\Omega,\ t>0,
  \end{cases}
\end{equation*}
where $\tau\in\{0,1\}$.
When $f(u)=u$, in the parabolic--parabolic setting $(\tau=1)$, 
Tao and Winkler \cite{T-W} showed that solutions are global and bounded under the conditions that $\frac{S(u)}{D(u)}\le cu^q$ with $q<\frac{2}{n}$ and $c>0$ and 
that $\Omega$ is a convex domain; Ishida et al. \cite{I-S-Y} removed the convexity of $\Omega$;
whereas Winkler \cite{W_2010_quasilinear} proved that solutions blow up in either finite or infinite time when $\frac{S(u)}{D(u)}\ge cu^q$ with $q>\frac{2}{n}$ and $c>0$;
In the parabolic--elliptic setting $(\tau=0)$, Lankeit \cite{L-2020} proved that
solutions exist globally and are bounded in the case $q<\frac{2}{n}$ and that unbounded solutions are constructed in the case $q>\frac{2}{n}$.
When $\tau=1$ and $D(u)=1$, $S(u)=u$ and $f(u)=u^\ell$ with $\ell>0$, Liu and Tao \cite{Liu-Tao} established global existence and boundedness under the condition that $0<\ell<\frac{2}{n}$;
Moreover, in the case that $D(u)=(u+1)^{m-1}$ and $S(u)=u(1+u)^{\alpha-1}$ 
with $m\in \mathbb{R}$ and $\alpha \in \mathbb{R}$, it was obtained that solutions are global and bounded under the condition $\alpha-m+\max\left\{\ell,\frac{1}{n}\right\}<\frac{2}{n}$ in \cite{TVY_2021}.

\item We next review the quasilinear Keller--Segel system with logistic source,
\begin{equation*}
  \begin{cases}
    u_t=\nabla \cdot (D(u) \nabla u) - \nabla \cdot (S(u)\nabla v) 
          + \lambda u - \mu u^{\kappa},
    & x\in\Omega,\ t>0, 
  \\
    \tau v_t=\Delta v - v + f(u),
    & x\in\Omega,\ t>0,
  \end{cases}
\end{equation*}
where $\lambda>0$, $\mu>0$, $\kappa>1$ and $\tau\in\{0,1\}$.
As to this system, blow-up phenomena are suppressed 
when $\kappa\ge2$ and $f(u)=u$.
Indeed, in the parabolic--parabolic setting ($\tau=1$), when $D(u)=1$ and $S(u)=u$, 
Winkler \cite{W-2010} derived that solutions exist globally and are bounded if $\mu>0$ is so large and $\kappa=2$;
When $D(u)= (u+1)^{m-1}$ and $S(u)=u(u+1)^{\alpha-1}$ with $m\in \mathbb{R}$ and $\alpha\in\mathbb{R}$, global existence and boundedness were obtained if $\lambda=\mu=1$, $\kappa=2$ and $0<\alpha-m+1<\frac{4}{4+n}$ by Zheng \cite{Z-2017}.
In the parabolic--elliptic setting $(\tau=0)$, when $D(u)=1$ and $S(u)=u$, 
Tello and Winker \cite{Tello-W-2007} showed that solutions exist globally and are bounded in the cases that $\kappa=2$ and $\mu>\max\left\{0,\frac{n-2}{n}\right\}$ and that $\kappa>2$ and $\mu>0$;
When $D(u)=u^{m-1}$ and $S(u)=u^\alpha$ for all $u\ge1$ with $m\ge1$ and $\alpha>0$, Zheng \cite{Z-2015} proved global existence and boundedness in the cases that $\kappa>1$ and $\alpha+1<\max\left\{m+\frac{2}{n},\kappa\right\}$ and that $\kappa>1$, 
$\alpha+1=\kappa$ and $\mu>\mu_0$ for some $\mu_0>0$.
On the other hands, in the parabolic--elliptic setting, it is known that blow-up occurs under the some conditions for $\kappa>1$ when $f(u)=u$.
When $D(u)=1$ and $S(u)=u$, Winkler \cite{W-2018} presented that if $1<\kappa<\frac{7}{6}\ (n\in\{3,4\})$ and $1<\kappa<1+\frac{1}{2(n-1)}\ (n\ge5)$, then solutions blow up in finite time;
Similar blow-up results were obtained in the case that  $D(u)=(u+1)^{m-1}$ and $S(u)=u(u+1)^{\alpha-1}$ with $m\ge1$ and $\alpha>0$ (see \cite{B-F-L, T_2021_KS_logistic, Tanaka-Y_2020}).

\item We turn our eyes into the quasilinear parabolic--elliptic chemotaxis system
\begin{equation*}
  \begin{cases}
    u_t=\nabla \cdot (D(u) \nabla u) - \nabla \cdot (S(u)\nabla v),
    & x\in\Omega,\ t>0, 
  \\
    0=\Delta v - \overline{M_f}(t) + f(u),
    & x\in\Omega,\ t>0.
  \end{cases}
\end{equation*}
A simplification of this system was introduced by J\"{o}ger and Luckhaus \cite{J-L}. 
When $D(u)=(u+1)^{m-1}$ with $m\in\mathbb{R}$, $S(u)=u$ and $f(u)=u$, Cie\'{s}lak and Winkler \cite{C-W} derived global existence and boundedness in the case $2-m<\frac{2}{n}$ and finite-time blow-up in the case $2-m>\frac{2}{n}$; 
When $D(u)=(u+1)^{m-1}$ and $S(u)=u(u+1)^{\alpha-1}$ with $m\le1$ and 
$\alpha\in\mathbb{R}$ as well as $f(u)=u$, Winkler and Djie \cite{W-D} proved that solutions are global and bounded if $\alpha-m+1<\frac{2}{n}$, whereas finite-time blow-up occurs if $\alpha-m+1>\frac{2}{n}$;
When $D(u)=1$, $S(u)=u$ and $f(u)=u^\ell$ with $\ell>0$, Winkler \cite{W-2018_nonlinear}
obtained that solutions exist globally and remain bounded in the case $\ell<\frac{2}{n}$ and that there exist solutions which are unbounded in finite time in the case $\ell>\frac{2}{n}$;
When $D(u)=(u+1)^{m-1}$, $S(u)=u$ and $f(u)=u^\ell$ with $m\in\mathbb{R}$ and $\ell>0$, global existence and boundedness were established if $\ell-m+1<\frac{2}{n}$ by Li \cite{Li_2019}. 
Moreover, in \cite{Li_2019} it was asserted that finite-time blow-up occurs under the condition that $\ell-m+1>\frac{2}{n}$. 
However, this condition should be repaired because from assumptions of \cite[Lemma 3.5]{Li_2019} we can obtain the condition that
  \begin{equation}\label{Licondi}
    \ell-(m-1)_+>\frac{2}{n},\quad\mbox{where}\quad (m-1)_+:=\max\{0,m-1\};
  \end{equation}
When $D(u)=1$, $S(u)=u(u+1)^{\alpha-1}$ and $f(u)=u^\ell$ with $\alpha>0$ and $\ell>0$,
Wang and Li \cite{Wang-Li} derived the critical value $\alpha+\ell-1=\frac{2}{n}$.

\item In the system \eqref{JL}, 
when $D(u)=1$, $S(u)=u$ and $f(u)=u$, Winkler \cite{W-2011} showed that
if $1<\kappa<\frac{3}{2}+\frac{1}{2n-2}\ (n\ge5)$, then there exists a solution blowing up in finite time; Moreover, a similar blow-up result was obtained in the case that $D(u)=(u+1)^{m-1}$ with $m\ge1$ in \cite{B-F-L};
Furthermore, Fuest \cite{F_2021_optimal} showed that solutions blow up in finite time under the conditions that $1<\kappa<\min\left\{2,\frac{n}{2}\right\}$ and $\mu>0\ (n\ge3)$ and that $\kappa=2$ and $\mu\in\left(0,\frac{n-4}{n}\right)\ (n\ge5)$;
In the two dimensional setting and $\kappa=2$, global existence and boundedness were established when $\int_\Omega u_0<8\pi$, whereas finite-time blow-up occurs when $\int_\Omega u_0<m_0$ with $m_0>8\pi$ in \cite{F-2020}.
\end{itemize}

In summary,
in \cite{B-F-L, F-2020,F_2021_optimal,W-2011}, blow-up results were derived in the chemotaxis system with logistic source and {\it linear} production.
However, 
boundedness and blow-up results were not obtained in the quasilinear chemotaxis system with logistic source and {\it nonlinear} production (when $D(u)=1$ and $S(u)=u$, recently, 
Yi et al. \cite{YMXD} derived the blow-up result under the condition that $\ell+1>\kappa\left(1+\frac{2}{n}\right)$).

\medskip

Our aim of this paper is to present conditions that solutions of \eqref{JL} are bounded or blow up.
Before we state the main results, we give conditions for the functions $D$, $S$ and $f$ as follows:
  \begin{align}
    \label{Dclass}
    D &\in C^2([0,\infty))\ \mbox{is positive};
    \\ \label{Sclass}
    S &\in C^2([0,\infty))\ \mbox{is nonnegative and nondecreasing};
    \\ \label{fclass}
    f &\in \bigcup_{\beta\in(0,1)} C_{\rm loc}^\beta([0,\infty)) \cap C^1((0,\infty))
    \ \mbox{is nonnegative and  nondecreasing}.
  \end{align}

We now state the main theorems. The first one asserts boundedness of solutions.
%
%
\begin{thm}\label{thm1}
Let $\Omega \subset \mathbb{R}^n\ (n\ge1)$ be a smooth bounded domain, 
and let $\delta \in (0,1]$, $m\in\mathbb{R}$, $\alpha>0$, $\lambda>0$, $\mu>0$, $\kappa>1$ and $\ell>0$.
Assume that $u_0 \in \bigcup_{\beta\in(0,1)} C^\beta(\overline{\Omega})$ is nonnegative
and $D$, $S$ and $f$ satisfy \eqref{Dclass}, \eqref{Sclass} and \eqref{fclass} as well as 
  \begin{align}
    \label{DSineq1}
    D(\xi) \ge C_D (\xi + \delta)^{m-1},
  \quad
    S(\xi) \le C_S \xi(\xi + \delta)^{\alpha-1}
    \quad\mbox{for all}\ \xi\ge0
  \end{align}
and
  \begin{align}\label{fineq1}
    f(\xi) \le L \xi^\ell
    \quad\mbox{for all}\ \xi\ge0
  \end{align}
with $C_D>0$, $C_S>0$ and $L>0$.
Suppose that $m$, $\alpha$, $\mu$, $\kappa$ and $\ell$ fulfill that
  \begin{alignat}{2}
    \label{thm1condi1}
    \mbox{if}\ 
    \alpha+\ell&<\max\left\{m+\frac{2}{n},\kappa\right\},& &\quad\mbox{then}\quad \mu>0,
  \\ 
    \label{thm1condi2}
    \mbox{if}\      
    \alpha+\ell&=\kappa,& &\quad\mbox{then}\quad
    \mu>\frac{n(\alpha+\ell-m)-2}{2(\alpha-1)+n(\alpha+\ell-m)}C_SL.
  \end{alignat}
Then there exists an exactly one pair $(u,v)$ of functions
  \begin{align*}
    \begin{cases}
      u\in C^0(\overline{\Omega}\times[0,\infty))\cap C^{2,1}(\overline{\Omega}\times(0,\infty)),\\
      v\in \bigcap_{q>n}C^0([0,\infty);W^{1,q}(\Omega))\cap C^{2,0}(\overline{\Omega}\times(0,\infty))
    \end{cases}
  \end{align*}
which solves \eqref{JL} classically. Moreover, the solution $(u,v)$ is bounded in the sense that
there exists $C>0$ such that
  \[
    \|u(\cdot,t)\|_{L^\infty(\Omega)} \le C
    \quad\mbox{for all}\ t \in(0,\infty).
  \]
\end{thm}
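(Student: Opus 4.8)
The plan is to follow the now-standard route for boundedness in quasilinear Keller--Segel systems with logistic damping: establish local existence, then derive an a priori $L^p$ bound for $u$ for some sufficiently large $p$, and finally bootstrap to an $L^\infty$ bound via elliptic regularity for $v$ and parabolic (semigroup / Moser-type) estimates for $u$. First I would invoke a local existence and uniqueness result together with an extensibility criterion of the form ``either $T_{\max}=\infty$ or $\|u(\cdot,t)\|_{L^\infty(\Omega)}\to\infty$ as $t\nearrow T_{\max}$'', so that it suffices to control $\sup_{t<T_{\max}}\|u(\cdot,t)\|_{L^\infty(\Omega)}$; this is where the regularity hypotheses \eqref{Dclass}--\eqref{fclass} on $D,S,f$ and on $u_0$ are used. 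Throughout, the elliptic equation $0=\Delta v-\overline{M_f}(t)+f(u)$ lets one replace $\nabla v$ by a ``nonlocal Riesz-type'' expression in $f(u)$; testing against suitable powers, the crucial quantitative input is $\int_\Omega |\Delta v|^p \le \int_\Omega |f(u)-\overline{M_f}|^p \lesssim \int_\Omega f(u)^p + (\int_\Omega f(u))^p$, and via elliptic $W^{2,p}$ estimates and Gagliardo--Nirenberg, control of $\nabla v$ in the norms one needs.

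The heart of the matter is the differential inequality for $y_p(t):=\int_\Omega (u+\delta)^{p+m-1}$ (or equivalently $\frac{d}{dt}\int_\Omega (u+\delta)^p$ with a shifted exponent). Testing the first equation of \eqref{JL} by $(u+\delta)^{p-1}$ and using \eqref{DSineq1}, \eqref{fineq1}, one obtains schematically
\[
  \frac{1}{p}\frac{d}{dt}\int_\Omega (u+\delta)^p
  + (p-1)C_D\int_\Omega (u+\delta)^{p+m-3}|\nabla u|^2
  \le (p-1)C_S\int_\Omega (u+\delta)^{p+\alpha-2}\,u\,|\nabla v|
  + \lambda\int_\Omega u(u+\delta)^{p-1} - \mu\int_\Omega u^\kappa (u+\delta)^{p-1}.
\]
After integrating by parts once more on the chemotaxis term to move the gradient onto a power of $u$ and then substituting $-\Delta v = \overline{M_f}(t)-f(u)$, the leading ``bad'' term becomes a multiple of $\int_\Omega (u+\delta)^{p+\alpha-1} f(u) \lesssim L\int_\Omega (u+\delta)^{p+\alpha+\ell-1}$ (plus lower-order and sign-definite pieces, where the nondecreasing monotonicity of $S$ and $f$ and nonnegativity of $u$ keep the signs under control). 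The logistic term contributes $-\mu\int_\Omega (u+\delta)^{p+\kappa-1}$ up to constants. Thus everything reduces to absorbing $C_S L\int_\Omega (u+\delta)^{p+\alpha+\ell-1}$ by the diffusion-induced good term $\int_\Omega (u+\delta)^{p+m-3}|\nabla u|^2 = \frac{4}{(p+m-1)^2}\int_\Omega |\nabla (u+\delta)^{\frac{p+m-1}{2}}|^2$ together with the dissipation $-\mu\int_\Omega (u+\delta)^{p+\kappa-1}$; this is precisely where the trichotomy \eqref{thm1condi1}--\eqref{thm1condi2} enters. When $\alpha+\ell<\kappa$, the logistic term alone dominates the bad term for every $\mu>0$ (Young's inequality on $L^p$ norms). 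When $\alpha+\ell<m+\frac{2}{n}$, the subcritical Gagliardo--Nirenberg inequality applied to $(u+\delta)^{\frac{p+m-1}{2}}$ lets the diffusion term absorb the bad term with an arbitrarily small constant for any $\mu>0$. The borderline case $\alpha+\ell=\kappa$ is the delicate one: here the exponents $p+\alpha+\ell-1$ and $p+\kappa-1$ coincide, so no exponent gap is available, and one must win by a \emph{sharp constant} comparison — the extra factor from integrating by parts on the chemotaxis term (which produces the combinatorial coefficient scaling like $\frac{p+\alpha-1}{p+\alpha+\ell-1}$, or after taking $p\to\infty$ effectively a coefficient built from $n$, $\alpha$, $\ell$, $m$) must be strictly beaten by $\mu/(C_S L)$, which is exactly the quantitative threshold $\mu>\frac{n(\alpha+\ell-m)-2}{2(\alpha-1)+n(\alpha+\ell-m)}C_SL$ in \eqref{thm1condi2}. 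So the main obstacle I anticipate is tracking these constants through the double integration by parts and the elliptic estimate for $v$ carefully enough that, in the critical case, the inequality $\mu/(C_S L)$ strictly exceeds the computed ratio yields absorption; this requires the estimate to be essentially optimal, not merely qualitative.

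Once the differential inequality $\frac{d}{dt}y_p(t) \le c_1 - c_2 y_p(t)$ with $c_2>0$ is in hand for a fixed sufficiently large $p>\max\{1,\, n(\alpha-m)_+,\ \dots\}$ (large enough that the Gagliardo--Nirenberg exponents and the Sobolev embedding $W^{2,p}\hookrightarrow W^{1,\infty}$ are usable), an ODE comparison (Grönwall) gives $\sup_{t<T_{\max}}\|u(\cdot,t)\|_{L^p(\Omega)}<\infty$. Then standard elliptic regularity applied to the second equation yields $\sup_{t<T_{\max}}\|v(\cdot,t)\|_{W^{1,\infty}(\Omega)}<\infty$ (taking $p>n$), and feeding this back into the first equation and running a Moser--Alikakos iteration or a variation-of-constants estimate for the semigroup generated by $\nabla\cdot(D(u)\nabla\cdot)$ (as in Tao--Winkler / Ishida--Seki--Yokota) upgrades this to $\sup_{t<T_{\max}}\|u(\cdot,t)\|_{L^\infty(\Omega)}<\infty$. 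By the extensibility criterion this forces $T_{\max}=\infty$, and the uniform-in-time bound produced by the iteration is precisely the claimed constant $C$. The remaining subtlety is that, since $m$ may be negative, all estimates should be phrased in terms of $u+\delta$ (which is bounded below by $\delta>0$) rather than $u$, so that powers like $(u+\delta)^{m-1}$ and the associated Moser iteration stay well defined; the shift $\delta$ in \eqref{DSineq1} is tailored for exactly this.
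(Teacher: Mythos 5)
Your overall scaffold matches the paper's: local existence with the $L^\infty$ extensibility criterion, testing the first equation against $(u+\delta)^{p-1}$, replacing $\Delta v$ via the elliptic equation, and absorbing the resulting term $\frac{p(p-1)C_SL}{p+\alpha-1}\int_\Omega(u+\delta)^{p+\alpha+\ell-1}$ either by the logistic dissipation (when $\alpha+\ell<\kappa$) or by the diffusion via subcritical Gagliardo--Nirenberg (when $\alpha+\ell<m+\tfrac2n$), then closing via Moser iteration. Cases 1 and 2 of the paper are correctly reproduced in spirit.

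However, your treatment of the critical case $\alpha+\ell=\kappa$ contains a genuine gap. You describe it as a \emph{single} sharp constant comparison, with the coefficient from the chemotaxis term ``after taking $p\to\infty$ effectively a coefficient built from $n,\alpha,\ell,m$'' having to lose to $\mu/(C_SL)$. This is not what happens and would not produce the stated threshold. The coefficient you actually get from integration by parts is $\frac{(p-1)}{p+\alpha-1}C_SL$ (after dividing by $p$), which tends to $C_SL$ as $p\to\infty$; a direct comparison at large $p$ therefore only works under $\mu>C_SL$, a strictly stronger hypothesis than \eqref{thm1condi2} (the fraction $\frac{n(\alpha+\ell-m)-2}{2(\alpha-1)+n(\alpha+\ell-m)}$ is $<1$). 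Moreover, no $n$- or $m$-dependence ever appears in that combinatorial coefficient; those parameters enter only through interpolation.

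The paper's actual argument is a two-step bootstrap. First, the constant comparison $\frac{(p-1)}{p+\alpha-1}C_SL<\mu$ holds precisely for $p<1+\frac{\alpha\mu}{(C_SL-\mu)_+}$, giving an $L^{p_0}$ bound for such (generally small) $p_0$. Second, the hypothesis \eqref{thm1condi2} is exactly what guarantees $1+\frac{\alpha\mu}{(C_SL-\mu)_+}>\frac n2(\alpha+\ell-m)$, so one can pick $p_0>\frac n2(\alpha+\ell-m)$; with $L^{p_0}$ in hand (instead of only $L^1$), a Gagliardo--Nirenberg interpolation anchored in $L^{p_0}$ becomes subcritical for every larger $p$, and the bad term is absorbed by the diffusion term alone, with the logistic term no longer needed to win the constant fight. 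Your proposal is missing this intermediate $L^{p_0}$ estimate and the observation that the threshold on $\mu$ is really a threshold on how large a $p_0$ the constant comparison can reach, relative to the Sobolev-critical exponent $\frac n2(\alpha+\ell-m)$.
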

%

We next state a result such that solutions blow up in finite time.
%
%
\begin{thm}\label{thm2}
Let $\Omega:=B_{R}(0) \subset \mathbb{R}^n\ (n\ge1)$ be a ball with some $R>0$, 
and let $\delta \in (0,1]$, $m\in\mathbb{R}$, $\alpha>0$, $\lambda>0$, $\mu>0$, $\kappa>1$ and $\ell>0$.
Assume that 
$D$, $S$ and $f$ satisfy \eqref{Dclass}, \eqref{Sclass} and \eqref{fclass} as well as 
  \begin{align}
    \label{DSineq2}
    D(\xi) \le C_D (\xi + \delta)^{m-1},
  \quad
    S(\xi) \ge C_S \xi(\xi + \delta)^{\alpha-1}
    \quad\mbox{for all}\ \xi\ge0
  \end{align}
and
  \begin{align}\label{fineq2}
    f(\xi) \ge L \xi^\ell
    \quad\mbox{for all}\ \xi\ge0
  \end{align}
with $C_D>0$, $C_S>0$ and $L>0$.
Suppose that
  \begin{align}
    \label{thm2condi1}
    &\mbox{if}\ m\ge0, \quad\mbox{then}\quad 
    \alpha+\ell > \max\left\{m+\frac{2}{n}\kappa,\kappa\right\},
  \\ \label{thm2condi2}
    &\mbox{if}\ m<0, \quad\mbox{then}\quad
    \alpha+\ell > \max\left\{\frac{2}{n}\kappa,\kappa\right\}.
  \end{align}
Then for all $M_0>0$ there exist $\ep_0\in(0,M_0)$ and $r_\star\in(0,R)$ with the 
following property\/{\rm :}
If
  \begin{align}\label{u0}
    u_0 \in \bigcup_{\beta\in(0,1)} C^\beta(\overline{\Omega})\ \mbox{is nonnegative, radially symmetric, nonincreasing with respect to $|x|$}
  \end{align}
and
  \begin{align}\label{u0mass}
    \int_\Omega u_0(x)\,dx=M_0
    \quad\mbox{and}\quad
    \int_{B_{r_\star}(0)}u_0(x)\,dx\ge M_0-\ep_0,
  \end{align}
then there exist $T^*\in(0,\infty)$ and an exactly one pair $(u,v)$ of functions
  \begin{align*}
    \begin{cases}
      u\in C^0(\overline{\Omega}\times[0,T^*))\cap C^{2,1}(\overline{\Omega}\times(0,T^*)),\\
      v\in \bigcap_{q>n}C^0([0,T^*);W^{1,q}(\Omega))\cap C^{2,0}(\overline{\Omega}\times(0,T^*))
    \end{cases}
  \end{align*}
which solves \eqref{JL} classically and blows up in the sense that
  \[
    \lim_{t\nearrow T^*}\|u(\cdot,t)\|_{L^{\infty}(\Omega)}=\infty.
  \]
\end{thm}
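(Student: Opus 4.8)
The plan is to follow the by-now-standard strategy of constructing initial data whose mass is highly concentrated near the origin and then showing that the associated moment-type functional $\phi(t):=\int_{B_R(0)} (|x|^{-\gamma} \text{ or similar weight})\, u(x,t)\,dx$ — more precisely the functional $\phi(s):=\int_0^{s_0} s^{-\sigma}(s_0-s) w(s,t)\,ds$ acting on the mass accumulation function $w(s,t):=\int_{B_{s^{1/n}}(0)} u(x,t)\,dx$ — satisfies a superlinear ODI of the form $\phi'(t)\ge c_1\phi(t)^{1+\eta}-c_2$ as long as the classical solution exists. Since a solution emanating from data as in \eqref{u0}, \eqref{u0mass} starts with $\phi(0)$ arbitrarily large (by choosing $\ep_0$ small and $r_\star$ small, forcing almost all the mass $M_0$ into $B_{r_\star}(0)$ where the weight $s^{-\sigma}$ is huge), the ODI forces $\phi$, hence $\|u(\cdot,t)\|_{L^\infty(\Omega)}$, to blow up before some finite $T^*$. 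The local existence and uniqueness of the classical solution, together with the extensibility criterion ``$\limsup_{t\nearrow T^*}\|u(\cdot,t)\|_{L^\infty}=\infty$ if $T^*<\infty$'', I take from the existence theory already invoked for Theorem~\ref{thm1} (the same fixed-point/parabolic-regularity argument applies on the ball, for radial data).

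The key steps, in order, are as follows. First, rewrite \eqref{JL} in the radial mass variable: with $s=r^n$ and $w(s,t)=n\omega_{n-1}^{-1}\int_{B_{r}(0)}u$, the first equation becomes a one-dimensional degenerate parabolic equation for $w$, in which the chemotaxis term contributes, after using the second (elliptic) equation to express $\nabla v$ through $f(u)$ and $\overline{M_f}$, a term of the type $+ c\, n^2 s^{2-2/n} w_s\cdot\big(\text{(average of }f(u)\text{ inside)}-\tfrac{s}{s_0}\overline{M_f}\,|\Omega|/(n\omega_{n-1})\big)$. The monotonicity of $S$ and $f$ and the lower bounds \eqref{DSineq2}, \eqref{fineq2} let me bound the cross term from below; the diffusion term is controlled from above using the upper bound $D(\xi)\le C_D(\xi+\delta)^{m-1}$. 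Second, test this equation against the weight $s^{-\sigma}(s_0-s)$ and integrate by parts to obtain $\phi'(t)\ge (\text{chemotaxis contribution}) - (\text{diffusion contribution}) + \lambda\phi - \mu(\text{logistic contribution})$. Third — and this is the technical heart — estimate each contribution in terms of $\phi$ itself: the chemotaxis term should dominate with a power $\phi^{1+\eta}$ for a suitable $\eta>0$, where the exponent bookkeeping is exactly where the hypotheses \eqref{thm2condi1}, \eqref{thm2condi2} (i.e. $\alpha+\ell>\max\{m+\tfrac{2}{n}\kappa,\kappa\}$ for $m\ge0$, and $\alpha+\ell>\max\{\tfrac{2}{n}\kappa,\kappa\}$ for $m<0$) enter; the logistic term $\mu u^\kappa$ must be absorbed, and the condition $\alpha+\ell>\kappa$ is precisely what guarantees that the nonlinear aggregation grows faster than the damping, so that $\mu\int u^\kappa(\cdots)$ can be swallowed into $c_1\phi^{1+\eta}$ up to an additive constant via Young's/Hölder's inequality. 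The appearance of $m+\tfrac{2}{n}\kappa$ rather than $m+\tfrac{2}{n}$ (as would be the case for linear production) traces back to balancing the $s^{2-2/n}$ factor from the Laplacian against the power of $w$ produced by $f(u)\sim u^\ell$ and the logistic power $\kappa$ inside the Hölder step.

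Fourth, fix the free parameters: choose $\sigma\in(0,n)$ close to its admissible endpoint, $s_0\in(0,R^n)$, and then $r_\star$ (hence $s_\star=r_\star^n$) and $\ep_0$ so that (a) all the exponent constraints from Step 3 hold simultaneously, and (b) the data in \eqref{u0mass} yield $\phi(0)$ larger than the equilibrium value $(c_2/c_1)^{1/(1+\eta)}$ of the ODI, which is where $M_0$ and the smallness of $\ep_0,r_\star$ are spent. A comparison argument for the scalar ODI then gives $\phi(t)\to\infty$ as $t\nearrow T^*$ for some $T^*\le T^*(\phi(0))<\infty$; since $\phi(t)\le C(s_0,\sigma)\,\|u(\cdot,t)\|_{L^\infty(\Omega)}$, the solution cannot be continued past $T^*$, proving $\lim_{t\nearrow T^*}\|u(\cdot,t)\|_{L^\infty}=\infty$.

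The main obstacle I anticipate is Step 3 — the differential-inequality estimate — and within it two subtleties: first, handling the sign and size of the nonlocal term $\overline{M_f}(t)$, which is not pointwise controlled but only controlled in an averaged sense, so one needs the concentration of mass to ensure the ``inside average'' of $f(u)$ beats $\tfrac{s}{s_0}$ times the full average on the relevant range of $s$; and second, the range $m<0$, where the diffusion exponent $m-1$ is very negative and the naive bound $D(\xi)\le C_D(\xi+\delta)^{m-1}\le C_D\delta^{m-1}$ is used (so diffusion is merely bounded, explaining why the threshold in \eqref{thm2condi2} loses the $m$-dependence), but one must still check that the diffusion contribution to $\phi'$ does not overwhelm the aggregation term near $s=0$ where the weight $s^{-\sigma}$ is singular — this forces the precise admissible range of $\sigma$ and is the delicate point of the whole construction.
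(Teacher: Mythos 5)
Your high-level strategy matches the paper's: pass to the mass accumulation function $w(s,t)$, test the resulting degenerate parabolic equation against the weight $s^{-\gamma}(s_0-s)$, derive a superlinear ODI for $\phi(t):=\int_0^{s_0}s^{-\gamma}(s_0-s)w\,ds$, and conclude finite-time blow-up by a comparison argument. The parameter bookkeeping you sketch (the $\alpha+\ell>\kappa$ condition for absorbing the logistic term; the $m+\tfrac{2}{n}\kappa$ threshold for absorbing the diffusion term; the $m<0$ case via $D(\xi)\le C_D\delta^{m-1}$) is the right bookkeeping, and your identification of the ODI estimate as the technical heart is accurate.

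However, two issues need correcting. First, your claim that the concentration hypothesis \eqref{u0mass} makes $\phi(0)$ ``arbitrarily large'' is false: since $w\le M_0/\omega_n$ pointwise and $\gamma<1$, one has $\phi(0)\le\frac{M_0 s_0^{2-\gamma}}{(1-\gamma)(2-\gamma)\omega_n}$ regardless of $\ep_0$ and $r_\star$. The actual mechanism is that $s_0$ is chosen small (depending only on $M_0$): the ``forcing'' constant $c_2 s_0^{3-\gamma-\frac{2}{n}\cdot\frac{\alpha+\ell}{\alpha+\ell-m}}$ shrinks faster, as $s_0\to0$, than the lower bound $\phi(0)\gtrsim M_0 s_0^{2-\gamma}$ raised to the power $\alpha+\ell$, precisely because $\alpha+\ell-m>\tfrac2n$; the role of $\ep_0$ and $r_\star$ is only to push $\phi(0)$ above the fixed threshold defining the set $S_\phi$ on which the ODI is valid (the ODI is not global in $t$; one must show $\phi$ stays in $S_\phi$, which the paper does by observing $\phi'\ge0$ there). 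Second, and more substantively, you leave the ODI derivation as a black box without the paper's key device: the auxiliary functional $\psi(t):=\int_0^{s_0}s^{1-\gamma}(s_0-s)w_s^{\alpha+\ell}\,ds$. All four integrals $I_1,\dots,I_4$ in the estimate for $\phi'$ are bounded in terms of powers $\psi^{\theta}$ with $\theta\in\{1,\tfrac{m}{\alpha+\ell},\tfrac{\kappa}{\alpha+\ell}\}$ via H\"older, the sub-unit powers are absorbed into $\psi$ by Young, and only then is the closing inequality $\psi\ge C s_0^{-(3-\gamma)(\alpha+\ell-1)}\phi^{\alpha+\ell}$ invoked. This is precisely where the present work improves on \cite{YMXD} (which had only $\psi\ge c\,\phi^{(1+\ell)/\kappa}$), and it is also why the threshold $m+\tfrac{2}{n}\kappa$ rather than $(m+\tfrac2n)\kappa$ appears. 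Without naming $\psi$ and this inequality, the step ``chemotaxis term dominates with a power $\phi^{1+\eta}$'' has no demonstrated route. You would also need the separate logarithmic estimate $\log(a+\delta)\le\frac1\ep a^\ep+c$ for the borderline case $m=0$, which your sketch does not address.
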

%
%
\begin{remark}
As to Theorem \ref{thm1}, letting $\kappa\to1$ implies that the condition \eqref{thm1condi1}
reduces the condition 
  \[
    \alpha+\ell<\max\left\{m+\frac{2}{n},1\right\},
  \]
which is a generalized condition such that solutions remain bounded 
in \cite{Li_2019, Wang-Li, W-2018_nonlinear}.
Also, as to Theorem \ref{thm2}, we see that the condition \eqref{thm2condi1} with $m=1$ and $\kappa\to1$ is a generalized condition such that solutions blow up in finite time in \cite{Wang-Li, W-2018_nonlinear}.
\end{remark}
%
%
\begin{remark}
When $\alpha=1$, letting $\kappa\to1$ entails from
 \eqref{thm2condi1} and \eqref{thm2condi2} that
  \begin{align}
    \label{remcondi1}
    &\mbox{if}\ m\ge0, \quad\mbox{then}\quad 
    \ell > \max\left\{m-1+\frac{2}{n},0\right\},
  \\ \label{remcondi2}
    &\mbox{if}\ m<0, \quad\mbox{then}\quad
    \ell > \max\left\{-1+\frac{2}{n},0\right\}.
  \end{align}
For instance, when $m\le1-\frac{2}{n}$, we obtain from \eqref{Licondi} that $\ell>\frac{2}{n}$, whereas we can observe from \eqref{remcondi1} and \eqref{remcondi2} that $\ell>\left\{\frac{2}{n}-1,0\right\}$.
Thus the conditions \eqref{remcondi1} and \eqref{remcondi2} improve the condition  in \cite{Li_2019}.

\begin{figure}[H]
\begin{minipage}{0.48\columnwidth}
\begin{center}
\scalebox{1.0}{\includegraphics{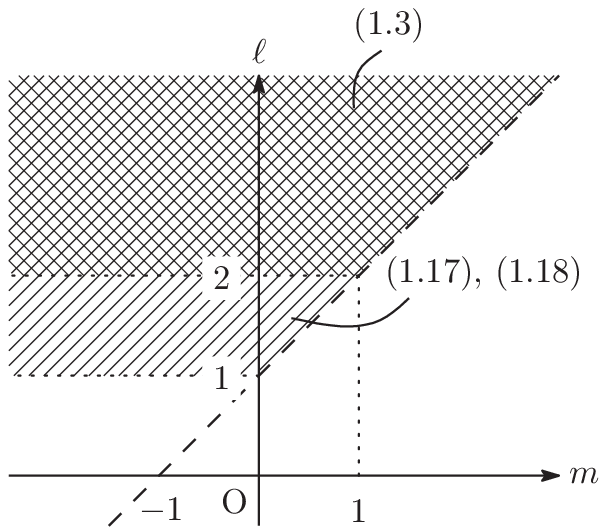}}
\caption{$n=1$, $\alpha=1$ and $\kappa\to1$}
\end{center}
\end{minipage}
\begin{minipage}{0.48\columnwidth}
\begin{center}
\scalebox{1.0}{\includegraphics{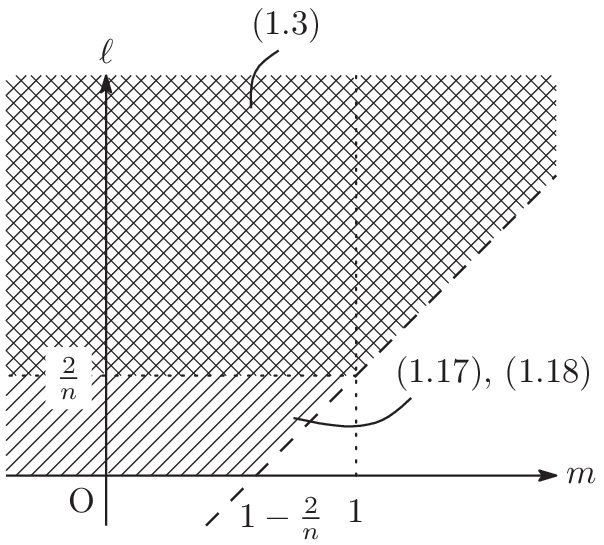}}
\vspace{-1.5mm}
\caption{$n\ge2$, $\alpha=1$ and $\kappa\to1$}
\end{center}
\end{minipage}
\end{figure}

\noindent
Moreover, in the case that $m=1$ and $\alpha=1$, we can establish that
  \begin{align}
    \label{lkcondi}
    1+\ell > \max\left\{1+\frac{2}{n}\kappa,\kappa\right\}.
  \end{align}
Because $\left(1+\frac{2}{n}\right)\kappa>\max\left\{1+\frac{2}{n}\kappa,\kappa\right\}$, we can make sure that the condition \eqref{lkcondi} is an improvement on the condition in \cite{YMXD}.

\begin{figure}[h]
\begin{minipage}{0.48\columnwidth}
\begin{center}
\scalebox{1.0}{\includegraphics{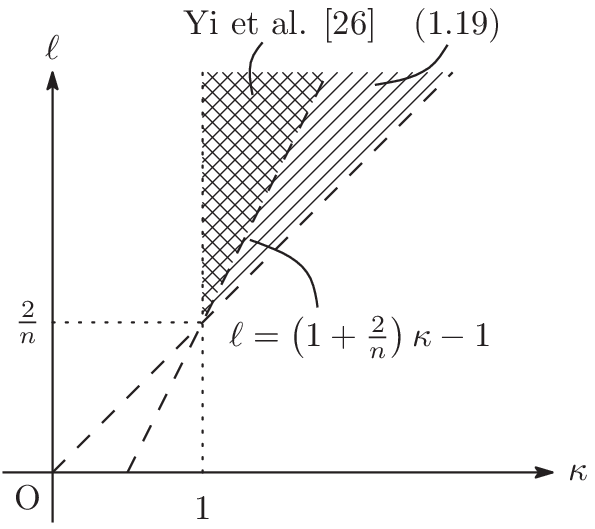}}
\caption{$n\in\{1,2\}$, $m=1$ and $\alpha=1$}
\end{center}
\end{minipage}
\begin{minipage}{0.48\columnwidth}
\begin{center}
\scalebox{1.0}{\includegraphics{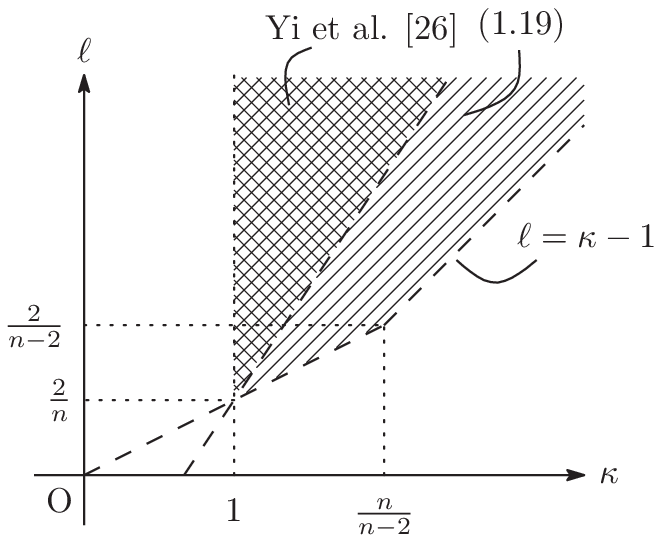}}
\vspace{-1.5mm}
\caption{$n\ge3$, $m=1$ and $\alpha=1$}
\end{center}
\end{minipage}
\end{figure}
\end{remark}

The proofs of Theorems \ref{thm1} and \ref{thm2} are based on those in \cite{W-2018_nonlinear}.
As to the proof of Theorem \ref{thm1}, our purpose is to establish an $L^p$-estimate for $u$. In order to obtain an $L^p$-estimate, we consider three cases.
With regard to the proof of Theorem \ref{thm2}, we first define the mass accumulation function
  \[
    w(s,t):=\int^{s^\frac{1}{n}}_{0} \rho^{n-1}u(\rho,t)\,d\rho
    \quad\mbox{for}\ s \in [0,R^n]\ \mbox{and}\ t \in [0,\tmax),
  \]
where $s:=r^n$ for $r\in[0,R]$, and transform the system \eqref{JL} to the parabolic equation
  \begin{align*}
    w_t &= n^2s^{2-\frac{2}{n}}D(nw_s)w_{ss}
            - \frac{1}{n} sS(nw_s)\overline{M_f}(t)
    \\ \notag
    &\quad\,
            + \frac{1}{n} S(nw_s)\int^{s}_{0}f(nw_s(\sigma,t))\,d\sigma
            +\lambda w
            - n^{\kappa-1}\mu \int^{s}_{0} w_s^\kappa(\sigma,t)\,d\sigma.
  \end{align*}
Next, we introduce the moment-type functional 
  \[
    \phi(t):=\int^{s_0}_{0} s^{-\gamma}(s_0-s)w(s,t)\,ds
  \]
and the functional
  \[
    \psi(t):=\int^{s_0}_{0} s^{1-\gamma}(s_0-s)w_s^{\alpha+\ell}(s,t)\,ds
  \]
with some $s_0\in(0,R^n)$ and $\gamma\in(-\infty,1)$.
By using the above functionals, we will derive nonlinear differential inequalities $\phi'\ge c_1\phi^{\alpha+\ell}-c_2$.
In order to attain this inequality, we apply the inequality $\psi\ge c_3\phi^{\alpha+\ell}$
(in \cite{YMXD} the inequality $\psi\ge c_4\phi^{\frac{1+\ell}{\kappa}}$ with some $c_4>0$ was obtained).
Moreover, in the case $m=0$, due to use the estimate $\log(a+\delta)\le \frac{1}{\ep}a^\ep+c_5$ for all $\ep>0$ with some $c_5>0$, we can improve the condition \eqref{Licondi} to the conditions \eqref{remcondi1} and \eqref{remcondi2}.

\medskip

This paper is organized as follows. In Section \ref{boundedness} we recall local existence and show Theorem \ref{thm1}.
In Section \ref{blow-up} we prove Theorem \ref{thm2} and give open problems.
%
\section{Boundedness}\label{boundedness}
%

In this section we derive global existence and boundedness in \eqref{JL}.
We first introduce a result on local existence of classical solutions to \eqref{JL}.
This lemma can be proved by a standard fixed point argument (see e.g., \cite{W-D}). 
%
%
\begin{lem}\label{localsol}
Let $\Omega \subset \mathbb{R}^n\ (n\ge1)$ be a smooth bounded domain, 
and let $\lambda>0$, $\mu>0$ and $\kappa>1$.
Assume that $u_0 \in \bigcup_{\beta\in(0,1)} C^\beta(\overline{\Omega})$ is nonnegative and $D$, $S$ and $f$ fulfill \eqref{Dclass}, \eqref{Sclass} and \eqref{fclass}.
Then there exist $\tmax \in (0,\infty]$ and a unique classical solution $(u,v)$ of \eqref{JL}
satisfying 
  \begin{align*}
    \begin{cases}
      u\in C^0(\overline{\Omega}\times[0,\tmax))\cap C^{2,1}(\overline{\Omega}\times(0,\tmax)),\\
      v\in \bigcap_{q>n}C^0([0,\tmax);W^{1,q}(\Omega))\cap C^{2,0}(\overline{\Omega}\times(0,\tmax)).
    \end{cases}
  \end{align*}
Moreover, $u\ge0$ in $\Omega \times (0,\tmax)$ and 
  \begin{align*}
    \mbox{if}\ \tmax<\infty, \quad \mbox{then}\quad
    \lim_{t\nearrow\tmax}\|u(\cdot,t)\|_{L^{\infty}(\Omega)}=\infty.
  \end{align*}
If $u_0$ is radially symmetric, 
then so are $u(\cdot,t)$ and $v(\cdot,t)$ for all $t \in (0,\tmax).$
\end{lem}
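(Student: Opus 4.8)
The plan is a Banach fixed-point argument of the kind carried out in \cite{W-D}. First I would eliminate $v$: since $\int_\Omega\bigl(f(u(\cdot,t))-\overline{M_f}(t)\bigr)\,dx=0$ by the very definition of $\overline{M_f}$, the Neumann problem $-\Delta v=f(u(\cdot,t))-\overline{M_f}(t)$ in $\Omega$, $\nabla v\cdot\nu=0$ on $\partial\Omega$, possesses for each $t$ a solution $v=v[u](\cdot,t)$, unique under the normalization $\int_\Omega v(\cdot,t)\,dx=0$. Elliptic $L^q$- and Schauder estimates then show that $u\mapsto v[u]$ gains one derivative: boundedness of $u$ yields a bound on $\nabla v[u]$ through $\|f(u)\|_{L^q(\Omega)}$ for $q>n$, and Hölder continuity of $u$ in $x$ yields $v[u]\in C^2$ in $x$. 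Using $\Delta v[u]=\overline{M_f}(t)-f(u)$ to expand $-\nabla\cdot(S(u)\nabla v[u])$, the system \eqref{JL} becomes the single quasilinear parabolic problem
\[
  u_t=\nabla\cdot\bigl(D(u)\nabla u\bigr)-S'(u)\nabla u\cdot\nabla v[u]+S(u)\bigl(f(u)-\overline{M_f}(t)\bigr)+\lambda u-\mu u^{\kappa}
\]
in $\Omega\times(0,T)$, with $\nabla u\cdot\nu=0$ on $\partial\Omega$ and $u(\cdot,0)=u_0$.

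Next I would fix $R>\|u_0\|_{L^\infty(\Omega)}+1$ and, for a small $T\in(0,1)$ to be determined, work in the closed bounded set $\mathcal S_T$ of those $\bar u$ in a suitable space $X_T$ (built on the parabolic Hölder space $C^{\beta,\beta/2}(\overline\Omega\times[0,T])$ together with an appropriate gradient seminorm on $(0,T]$, so as to be compatible with the only $C^\beta$-regular datum $u_0$) satisfying $\bar u(\cdot,0)=u_0$ and $\|\bar u\|_{L^\infty(\Omega\times(0,T))}\le R$. Given $\bar u\in\mathcal S_T$, I would set $\Phi(\bar u):=u$ where $u$ solves the \emph{linear} parabolic problem obtained by freezing every occurrence of the unknown except $u_t$ and the second-order term, i.e.\ freezing $D(\bar u)$, $S'(\bar u)\nabla v[\bar u]$, $S(\bar u)\bigl(f(\bar u)-\overline{M_f}[\bar u]\bigr)$ and $\lambda\bar u-\mu\bar u^{\kappa}$. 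Because $D(0)>0$ and $\bar u$ is bounded this is uniformly parabolic with Hölder coefficients, so linear parabolic Schauder and $L^p$ theory give a unique solution together with bounds that, for $T$ small enough, guarantee $\Phi(\bar u)\in\mathcal S_T$ and make $\Phi$ a contraction (here one also uses that $\bar u\mapsto\overline{M_f}[\bar u]$ and $\bar u\mapsto\nabla v[\bar u]$ are Lipschitz on bounded sets). The fixed point $u$ solves the quasilinear problem; a standard bootstrap through parabolic and elliptic Schauder theory upgrades its regularity to $u\in C^0(\overline\Omega\times[0,T])\cap C^{2,1}(\overline\Omega\times(0,T))$ and $v=v[u]\in\bigcap_{q>n}C^0([0,T);W^{1,q}(\Omega))\cap C^{2,0}(\overline\Omega\times(0,T))$, and uniqueness in these classes follows from the contraction estimate combined with a Gronwall argument on the difference of two solutions.

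It then remains to pass to a maximal existence interval and to establish the last three statements. Extending the solution as far as possible produces $\tmax\in(0,\infty]$; if $\tmax<\infty$ while $\|u(\cdot,t)\|_{L^\infty(\Omega)}$ remained bounded along some sequence $t_k\nearrow\tmax$, then re-running the construction from time $t_k$ would yield solutions existing over a uniform additional time span, contradicting maximality, and a routine refinement upgrades this to $\lim_{t\nearrow\tmax}\|u(\cdot,t)\|_{L^\infty(\Omega)}=\infty$. Nonnegativity of $u$ on $\Omega\times(0,\tmax)$ follows from the parabolic comparison principle, since the constant $0$ is a subsolution of the $u$-equation. Finally, if $\Omega$ is rotationally symmetric (e.g.\ a ball centered at the origin) and $u_0$ is radially symmetric, then for each orthogonal matrix $\mathcal R$ the pair $(x,t)\mapsto\bigl(u(\mathcal R x,t),v(\mathcal R x,t)\bigr)$ solves \eqref{JL} with the same data, so uniqueness forces it to coincide with $(u,v)$; hence $u(\cdot,t)$ and $v(\cdot,t)$ are radial for each $t\in(0,\tmax)$.

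The step I expect to be the main obstacle is the first one inside the fixed-point scheme: choosing $X_T$ strong enough to give meaning to $\nabla u$ and to the Hölder-continuous coefficients required by parabolic Schauder theory, yet weak enough to accommodate the merely $C^\beta$ (and possibly vanishing) datum $u_0$ and the limited regularity $f\in\bigcup_{\beta\in(0,1)}C^\beta_{\rm loc}([0,\infty))\cap C^1((0,\infty))$, while keeping the nonlocal term $\overline{M_f}(t)$ under control. Once such a space is fixed, the remaining estimates are of standard linear parabolic and elliptic type.
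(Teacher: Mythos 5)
Your proposal is correct and matches the approach the paper points to: the paper itself gives no written proof of Lemma~\ref{localsol} beyond the single sentence ``This lemma can be proved by a standard fixed point argument (see e.g., \cite{W-D}),'' and your sketch is a faithful reconstruction of that argument (solve the elliptic subproblem with zero-mean normalization, set up a contraction in a parabolic H\"older-type space on a short time interval, bootstrap regularity, extend to a maximal interval with the blow-up dichotomy, use comparison for nonnegativity, and use uniqueness plus rotational invariance for radial symmetry). You also correctly flag the genuine technical care needed in choosing $X_T$ so that the merely $C^\beta$ datum and the limited regularity of $f$ are compatible with the Schauder machinery, which is exactly the delicate point handled in references like \cite{W-D}.
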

%
In the following we assume that $\Omega \subset \mathbb{R}^n\ (n\ge1)$ is a smooth bounded domain and $\delta \in (0,1]$, $m\in\mathbb{R}$, $\alpha>0$, $\lambda>0$, $\mu>0$, $\kappa>1$ and $\ell>0$. 
Also, we suppose that $D$, $S$ and $f$ satisfy \eqref{DSineq1} and \eqref{fineq1}.
Moreover, let $(u,v)$ be the solution of \eqref{JL} on $[0,\tmax)$ as in Lemma \ref{localsol}. 
We next recall the following lemma which is obtained from the first equation in \eqref{JL}.
%
%
\begin{lem}\label{L1}
The classical solution $u$ satisfies that
  \begin{align}\label{L1lem}
    \int_\Omega u(x,t)\,dx \le M_*:=\max\left\{\int_\Omega u_0(x)\,dx, \left(\frac{\lambda}{\mu}|\Omega|^{\kappa-1}\right)^\frac{1}{\kappa-1}\right\} \quad \mbox{for all}\ t \in (0,\tmax).
  \end{align}
\end{lem}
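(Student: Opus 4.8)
The plan is to reduce the claim to a scalar ordinary differential inequality for the total mass $y(t):=\int_\Omega u(x,t)\,dx$ and then close it by a comparison argument. First I would integrate the first equation of \eqref{JL} over $\Omega$; since $u(\cdot,t)\in C^{2,1}(\overline{\Omega}\times(0,\tmax))$ and $\nabla u\cdot\nu=\nabla v\cdot\nu=0$ on $\pa\Omega$, the divergence theorem makes both $\int_\Omega \nabla\cdot(D(u)\nabla u)\,dx$ and $\int_\Omega\nabla\cdot(S(u)\nabla v)\,dx$ vanish, so that $y$ is continuous on $[0,\tmax)$, differentiable on $(0,\tmax)$, and
\[
  y'(t)=\lambda\int_\Omega u(x,t)\,dx-\mu\int_\Omega u^\kappa(x,t)\,dx
  \qquad\text{for }t\in(0,\tmax).
\]

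Next I would invoke Hölder's inequality in the form $\int_\Omega u\le|\Omega|^{1-\frac1\kappa}\bigl(\int_\Omega u^\kappa\bigr)^{\frac1\kappa}$, equivalently $\int_\Omega u^\kappa\ge|\Omega|^{1-\kappa}y^\kappa$, which together with $\mu>0$ yields the autonomous differential inequality
\[
  y'(t)\le\lambda\, y(t)-\mu\,|\Omega|^{1-\kappa}\,y^\kappa(t)
  \qquad\text{for }t\in(0,\tmax).
\]
The right-hand side is the vector field of the logistic-type ODE $z'=\lambda z-\mu|\Omega|^{1-\kappa}z^\kappa$, whose unique positive equilibrium is $z_\star:=\bigl(\tfrac{\lambda}{\mu}|\Omega|^{\kappa-1}\bigr)^{\frac1{\kappa-1}}$, and since $\kappa>1$ one checks that $\lambda z-\mu|\Omega|^{1-\kappa}z^\kappa<0$ whenever $z>z_\star$.

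Finally I would run the standard contradiction argument. Set $M_*:=\max\{y(0),z_\star\}$ as in \eqref{L1lem}. If $y(t_1)>M_*$ for some $t_1\in(0,\tmax)$, then by continuity of $y$ together with $y(0)\le M_*$ there is a largest $t_0\in[0,t_1)$ with $y(t_0)=M_*$, and then $y(t)>M_*\ge z_\star$ for all $t\in(t_0,t_1]$; but this forces $y'(t)<0$ on $(t_0,t_1]$, hence $y(t_1)<y(t_0)=M_*$, a contradiction. Therefore $y(t)\le M_*$ on $[0,\tmax)$, which is exactly \eqref{L1lem}. I do not anticipate any genuine obstacle: the only points needing minor care are the vanishing of the boundary terms (guaranteed by the regularity in Lemma \ref{localsol} and the no-flux conditions) and the behaviour of $y$ at $t=0$, which is handled through continuity rather than differentiability.
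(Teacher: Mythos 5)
Your proposal is correct and follows the same route as the paper: integrate the first equation over $\Omega$, use the no-flux boundary conditions to kill the divergence terms, apply H\"older's inequality to obtain the ODI $y'\le\lambda y-\mu|\Omega|^{1-\kappa}y^\kappa$, and close by ODE comparison. The paper simply invokes ``an ODE comparison argument'' where you spell it out as a contradiction at the first crossing of $M_*$; this is the same idea, just made explicit.
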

\begin{proof}
Integrating the first equation in \eqref{JL} and using H\"{o}lder's inequality, we have
\[
  \frac{d}{dt}\int_\Omega u\,dx \le \lambda \int_\Omega u\,dx - \mu|\Omega|^{1-\kappa}\left(\int_\Omega u\,dx\right)^\kappa
\]
for all $t \in (0,\tmax)$. By an ODE comparison argument we attain \eqref{L1lem}.
\end{proof}

In order to see global existence and boundedness of solutions, it is sufficient to make sure that for each nonnegative initial data $u_0 \in \bigcup_{\beta\in(0,1)} C^\beta(\overline{\Omega})$ and for any $p>1$ we can take $C=C(p)>0$ such that
\begin{align}\label{Lp}
  \int_\Omega u^p(x,t)\,dx \le C \quad \mbox{for all}\ t \in (0,\tmax).
\end{align}
In the following subsections we will prove \eqref{Lp} in three cases as follows:
\begin{itemize}
  \item Case $1$. $\alpha+\ell<m+\frac{2}{n}$ and $\mu>0$.
  \item Case $2$. $\alpha+\ell<\kappa$ and $\mu>0$.
  \item Case $3$. $\alpha+\ell=\kappa$ and $\mu>\frac{n(\alpha+\ell-m)-2}{2(\alpha-1)+n(\alpha+\ell-m)}C_SL$.
\end{itemize}

\subsection{Case $\mathbf{1}$. $\alpha+\ell<m+\frac{2}{n}$ and $\mu>0$.}

In this subsection we derive \eqref{Lp} 
under the condition that $\alpha+\ell<m+\frac{2}{n}$ and $\mu>0$. 
%
%
\begin{lem}\label{GBlem1}
Let $\mu>0$ and assume that $m\in\mathbb{R}$, $\alpha>0$ and $\ell>0$ satisfy 
\begin{align}\label{case1}
  \alpha+\ell<m+\frac{2}{n}.
\end{align}
Then for any $p>\max\left\{1,2-m,2-(\alpha+\ell),\frac{n}{2}(1-m)+\left(\frac{n}{2}-1\right)(\alpha+\ell-1)\right\}$ there is $C>0$ such that 
\begin{align}\label{lemLp-1}
  \int_\Omega u^p(x,t)\,dx \le C \quad \mbox{for all}\ t \in (0,\tmax).
\end{align}
\end{lem}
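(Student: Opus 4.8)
The plan is to prove the $L^p$-bound \eqref{lemLp-1} by testing the first equation in \eqref{JL} with $u^{p-1}$ and exploiting the dissipative effects coming from the diffusion term and, crucially, from the logistic term $-\mu u^\kappa$. First I would compute
\[
  \frac{1}{p}\frac{d}{dt}\int_\Omega u^p
  = -(p-1)\int_\Omega u^{p-2}D(u)|\nabla u|^2
    + (p-1)\int_\Omega u^{p-2}S(u)\nabla u\cdot\nabla v
    + \lambda\int_\Omega u^p - \mu\int_\Omega u^{p+\kappa-1}.
\]
Using the lower bound $D(\xi)\ge C_D(\xi+\delta)^{m-1}$ from \eqref{DSineq1}, the first term controls $\int_\Omega(u+\delta)^{m-1}u^{p-2}|\nabla u|^2$, which up to constants is $\int_\Omega|\nabla u^{\frac{p+m-1}{2}}|^2$ (here one needs $p+m-1>0$, explaining the $p>2-m$ restriction, so that $u^{\frac{p+m-1}{2}}$ makes sense as a natural quantity; when $m-1<0$ one uses $(u+\delta)^{m-1}\ge (u+\delta)^{m-1}$ against $u+\delta$ rather than $u$, a routine technicality). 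The second term is the genuinely chemotactic one and is the place to invoke the second equation.

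The key step is estimating the cross term. Since $0=\Delta v - \overline{M_f}(t)+f(u)$, integration by parts converts $\int_\Omega u^{p-2}S(u)\nabla u\cdot\nabla v$ into an expression involving $\Delta v = \overline{M_f}(t)-f(u)$; more precisely, writing $G(\xi):=\int_0^\xi \sigma^{p-2}S(\sigma)\,d\sigma$ we get $\int_\Omega \nabla G(u)\cdot\nabla v = -\int_\Omega G(u)\Delta v = \int_\Omega G(u)\bigl(f(u)-\overline{M_f}(t)\bigr)$. With the growth bounds $S(\xi)\le C_S\xi(\xi+\delta)^{\alpha-1}$ and $f(\xi)\le L\xi^\ell$ one obtains $G(u)\lesssim u^{p+\alpha-1}$ (modulo $\delta$-shifts) and hence the cross term is bounded by $C\int_\Omega u^{p+\alpha+\ell-1}$ plus lower-order terms controlled via $\overline{M_f}(t)$ and the $L^1$-bound $M_*$ of Lemma \ref{L1}. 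So the differential inequality becomes, schematically,
\[
  \frac{1}{p}\frac{d}{dt}\int_\Omega u^p
  \le -c_1\int_\Omega\bigl|\nabla u^{\frac{p+m-1}{2}}\bigr|^2
    + c_2\int_\Omega u^{p+\alpha+\ell-1}
    + \lambda\int_\Omega u^p - \mu\int_\Omega u^{p+\kappa-1} + c_3.
\]

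The heart of the matter is absorbing $c_2\int_\Omega u^{p+\alpha+\ell-1}$. Here the condition \eqref{case1}, $\alpha+\ell<m+\frac{2}{n}$, is exactly what allows a Gagliardo–Nirenberg interpolation to dominate this term by an arbitrarily small multiple of the gradient term $\int_\Omega|\nabla u^{\frac{p+m-1}{2}}|^2$ together with a power of $\int_\Omega u$ (which is bounded by $M_*$). Concretely, setting $\theta:=p+\alpha+\ell-1$ and $q:=\frac{p+m-1}{2}$, one writes $\int_\Omega u^\theta = \|u^q\|_{L^{2\theta/(p+m-1)}}^{2\theta/(p+m-1)}$ and applies Gagliardo–Nirenberg in the form $\|w\|_{L^a}\le C\|\nabla w\|_{L^2}^{\eta}\|w\|_{L^b}^{1-\eta}+C\|w\|_{L^b}$ with $w=u^q$, $b$ chosen so that $qb\le$ something controlled by $M_*$; the exponent condition for the $\|\nabla w\|_{L^2}$-power to be strictly less than $2$ (so Young's inequality applies) reduces precisely to $\alpha+\ell<m+\frac{2}{n}$, and the extra lower-bound restrictions on $p$ in the statement (namely $p>2-(\alpha+\ell)$ and $p>\frac{n}{2}(1-m)+(\frac{n}{2}-1)(\alpha+\ell-1)$) are what guarantee the interpolation parameters lie in the admissible range $\eta\in(0,1)$ with the needed sign. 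After this absorption, the logistic dissipation $-\mu\int_\Omega u^{p+\kappa-1}$ is not even essential in this case but can be kept; one ends up with $y'(t)\le c_4 - c_5 y(t)^{r}$ for $y(t)=\int_\Omega u^p$ with some $r\ge1$ (using that $\int_\Omega u^p$ dominates a power of itself via the $L^1$-bound, or simply $y'\le c_4-c_5 y$ after a further interpolation), whence $y$ is bounded by an ODE comparison, giving \eqref{lemLp-1}. The main obstacle, and the step demanding the most care, is the bookkeeping in the Gagliardo–Nirenberg step — tracking the precise exponents, verifying $\eta\in(0,1)$ under the stated lower bounds on $p$, and handling the $\delta$-shifted powers $(u+\delta)^{\alpha-1}$ versus $u^{\alpha-1}$ uniformly for $m$ of either sign.
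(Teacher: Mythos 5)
Your proposal is correct and follows essentially the same route as the paper's proof: test with (a $\delta$-shifted) power of $u$, use the lower bound on $D$ to produce the gradient term $\int_\Omega|\nabla(u+\delta)^{(p+m-1)/2}|^2$, substitute $\Delta v=\overline{M_f}-f(u)$ to turn the chemotactic cross term into $\int_\Omega(u+\delta)^{p+\alpha-1}f(u)\lesssim\int_\Omega(u+\delta)^{p+\alpha+\ell-1}$, absorb that by Gagliardo--Nirenberg (where the key exponent condition reduces exactly to $\alpha+\ell<m+\frac2n$ and the three lower bounds on $p$ guarantee $\theta\in(0,1)$), and close via an ODE comparison. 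One small caveat: your aside that the logistic dissipation is ``not even essential'' is not true in general for this lemma --- the alternative of absorbing $\lambda\int_\Omega u^p$ into the gradient term via a second Gagliardo--Nirenberg interpolation requires $m>1-\frac2n$, whereas Case 1 permits any $m$ with $\alpha+\ell<m+\frac2n$, so for $m\le 1-\frac2n$ (possible when $\alpha+\ell<1$) the $-\mu\int_\Omega u^{p+\kappa-1}$ term is in fact what closes the ODE, exactly as the paper uses it.
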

\begin{proof}
By virtue of the first equation in \eqref{JL} and 
$D(u) \ge C_D (u+\delta)^{m-1}$, we have
  \begin{align}\label{Lpcom}
    \frac{d}{dt} \int_\Omega (u+\delta)^p\,dx 
    &\le -p(p-1)C_D \int_\Omega (u+\delta)^{p+m-3}|
\nabla u|^2\,dx 
    \\ \notag
    &\quad\,
           +p(p-1) \int_\Omega (u+\delta)^{p-2}S(u) \nabla u \cdot \nabla v\,dx
    \\ \notag
    &\quad\,
           +p \lambda \int_\Omega u(u+\delta)^{p-1}\,dx 
           -p \mu \int_\Omega u^\kappa(u+\delta)^{p-1}\,dx
   \\ \notag
    &=   -\frac{4p(p-1)C_D}{(p+m-1)^2} \int_\Omega |\nabla (u+\delta)^{\frac{p+m-1}{2}}|^2\,dx 
    \\ \notag
    &\quad\,
           +p(p-1) \int_\Omega 
             \nabla \left(\int_{0}^{u} (\xi+\delta)^{p-2}S(\xi)\,d\xi\right) \cdot \nabla v\,dx
    \\ \notag
    &\quad\,
           +p \lambda \int_\Omega u(u+\delta)^{p-1}\,dx 
           -p \mu \int_\Omega u^\kappa(u+\delta)^{p-1}\,dx
    \\ \notag
    &=: I_1 + I_2 + I_3 + I_4
  \end{align}
for all $t \in (0,\tmax)$.
Noting from $S(\xi) \le C_S (\xi+\delta)^{\alpha}$ and $p>1-\alpha$ that
  \[
    \int_{0}^{u} (\xi+\delta)^{p-2}S(\xi)\,d\xi 
    \le C_S \int_{0}^{u} (\xi+\delta)^{p+\alpha-2}\,d\xi
    \le \frac{C_S}{p+\alpha-1} (u+\delta)^{p+\alpha-1},
  \]
from \eqref{fineq1} and the second equation in \eqref{JL} we can obtain
  \begin{align}\label{I2com}
    I_2&= -p(p-1) 
              \int_\Omega \left(\int_{0}^{u} (\xi+\delta)^{p-2}S(\xi)\,d\xi\right)\Delta v\,dx
    \\ \notag
       &\le \frac{p(p-1)C_S}{p+\alpha-1} \int_\Omega (u+\delta)^{p+\alpha-1}f(u)\,dx
    \\ \notag
       &\le \frac{p(p-1)C_SL}{p+\alpha-1} \int_\Omega (u+\delta)^{p+\alpha+\ell-1}\,dx
  \end{align}
for all $t \in (0,\tmax)$. As to $I_3$ and $I_4$, since we see from elementary calculations that there exists $\varepsilon>0$ so small such that
  \[
    (u+\delta)^{\kappa} \le (1+\ep)u^\kappa + C_\ep \delta,
  \]
where $C_\ep:=\left(\frac{\delta}{1-(1+\ep)^{-\frac{1}{\kappa-1}}}\right)^{\kappa-1}>0$,
we can observe
  \begin{align}\label{I3I4com}
    &I_3+I_4 
    \\ \notag
    &\quad\,
      \le p \lambda \int_\Omega u(u+\delta)^{p-1}\,dx 
                   -\frac{p\mu}{1+\ep} \int_\Omega (u+\delta)^{p+\kappa-1}\,dx
                   +\frac{p\mu C_\ep}{1+\ep} \int_\Omega \delta(u+\delta)^{p-1}\,dx
    \\ \notag
    &\quad\,
      \le \widetilde{C}_\ep \int_\Omega (u+\delta)^{p}\,dx 
           -\frac{p\mu}{1+\ep} \int_\Omega (u+\delta)^{p+\kappa-1}\,dx
  \end{align}
for all $t \in (0,\tmax)$, where $\widetilde{C}_\ep:=\max\left\{p\lambda, \frac{p\mu C_\ep}{1+\ep}\right\}>0$.
From \eqref{Lpcom}--\eqref{I3I4com} it follows that
  \begin{align}\label{comesti}
    &\frac{d}{dt} \int_\Omega (u+\delta)^p\,dx 
    \\ \notag
    &\quad\,
      \le -\frac{4p(p-1)C_D}{(p+m-1)^2} \int_\Omega |\nabla (u+\delta)^{\frac{p+m-1}{2}}|^2\,dx 
           +\frac{p(p-1)C_SL}{p+\alpha-1} \int_\Omega (u+\delta)^{p+\alpha+\ell-1}\,dx
    \\ \notag
    &\quad\,\quad\,
           +\widetilde{C}_\ep \int_\Omega (u+\delta)^{p}\,dx 
           -\frac{p\mu}{1+\ep} \int_\Omega (u+\delta)^{p+\kappa-1}\,dx
  \end{align}
for all $t \in (0,\tmax)$. 
Here, let 
  \[
    \theta:=\frac{\frac{p+m-1}{2}-\frac{p+m-1}{2(p+\alpha+\ell-1)}}{\frac{p+m-1}{2}+\frac{1}{n}-\frac{1}{2}}.
  \]
From $p>\max\left\{1,2-m,2-(\alpha+\ell),\frac{n}{2}(1-m)+\left(\frac{n}{2}-1\right)(\alpha+\ell-1)\right\}$ we see $\theta \in (0,1)$. 
Thus we can apply the Gagliardo--Nirenberg inequality to find $c_1>0$ such that
  \begin{align}\label{GN}
    \int_\Omega (u+\delta)^{p+\alpha+\ell-1}\,dx
    &= \|(u+\delta)^{\frac{p+m-1}{2}}\|_{L^{\frac{2(p+\alpha+\ell-1)}{p+m-1}}(\Omega)}
         ^{\frac{2(p+\alpha+\ell-1)}{p+m-1}}
    \\ \notag
    &\le c_1  \|\nabla(u+\delta)^{\frac{p+m-1}{2}}\|_{L^2(\Omega)}
                  ^{\frac{2(p+\alpha+\ell-1)}{p+m-1}\theta}
                  \|(u+\delta)^{\frac{p+m-1}{2}}\|_{L^{\frac{2}{p+m-1}}(\Omega)}
                  ^{\frac{2(p+\alpha+\ell-1)}{p+m-1}(1-\theta)}
    \\ \notag
    &\quad\,
          + c_1 \|(u+\delta)^{\frac{p+m-1}{2}}\|_{L^{\frac{2}{p+m-1}}(\Omega)}
                  ^{\frac{2(p+\alpha+\ell-1)}{p+m-1}}
  \end{align}
for all $t \in (0,\tmax)$.
Moreover, thanks to \eqref{case1}, we have
  \[
    \frac{2(p+\alpha+\ell-1)}{p+m-1}\theta
    =\frac{p+\alpha+\ell-2}{\frac{1}{2}\left(p+m-2+\frac{2}{n}\right)}<2.
  \]
Hence, noticing from Lemma \ref{L1} that $\int_\Omega u\,dx \le M_*$,
from \eqref{GN} and Young's inequality we can take $c_2>0$ such that
  \begin{align}\label{term2-1}
    \frac{p(p-1)C_SL}{p+\alpha-1} \int_\Omega (u+\delta)^{p+\alpha+\ell-1}\,dx
    \le \frac{2p(p-1)C_D}{(p+m-1)^2} \int_\Omega |\nabla (u+\delta)^{\frac{p+m-1}{2}}|^2\,dx
    +c_2
  \end{align}
for all $t \in (0,\tmax)$. 
A combination of \eqref{comesti} and \eqref{term2-1} yields that
  \[
    \frac{d}{dt} \int_\Omega (u+\delta)^p\,dx 
    \le \widetilde{C}_\ep \int_\Omega (u+\delta)^{p}\,dx 
         -\frac{p\mu}{2(1+\ep)} \int_\Omega (u+\delta)^{p+\kappa-1}\,dx + c_2
  \]
for all $t \in (0,\tmax)$. By H\"{o}lder's inequality there exists $c_3>0$ such that
  \[
    \frac{d}{dt} \int_\Omega (u+\delta)^p\,dx 
    \le \widetilde{C}_\ep \int_\Omega (u+\delta)^{p}\,dx 
         -c_3 \left(\int_\Omega (u+\delta)^p\,dx\right)^{\frac{p+\kappa-1}{p}} + c_2
  \]
for all $t \in (0,\tmax)$, which yields \eqref{lemLp-1} by an ODE comparison argument.
\end{proof}

\subsection{Case $\mathbf{2}$. $\alpha+\ell<\kappa$ and $\mu>0$.}

In this subsection we show \eqref{Lp} under the condition that $\alpha+\ell<\kappa$ and $\mu>0$.
%
%
\begin{lem}\label{GBlem2}
Let $\mu>0$ and assume that $\alpha>0$, $\kappa>1$ and $\ell>0$ satisfy 
\begin{align}\label{case2}
  \alpha+\ell<\kappa.
\end{align}
Then for any $p>1$ there exists $C>0$ such that 
\begin{align}\label{lemLp-2}
  \int_\Omega u^p(x,t)\,dx \le C \quad \mbox{for all}\ t \in (0,\tmax).
\end{align}
\end{lem}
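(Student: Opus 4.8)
The plan is to reuse the differential inequality \eqref{comesti} established in the proof of Lemma \ref{GBlem1}, which holds for \emph{any} admissible $p$ (it came only from testing the first equation in \eqref{JL} with $(u+\delta)^{p-1}$, the bound $S(\xi)\le C_S(\xi+\delta)^\alpha$, \eqref{fineq1}, and the elementary pointwise estimate $(u+\delta)^\kappa\le(1+\ep)u^\kappa+C_\ep\delta$). Thus without any restriction from the Gagliardo--Nirenberg step I would start from
  \begin{align*}
    \frac{d}{dt}\int_\Omega (u+\delta)^p\,dx
    &\le \frac{p(p-1)C_SL}{p+\alpha-1}\int_\Omega (u+\delta)^{p+\alpha+\ell-1}\,dx
    +\widetilde{C}_\ep\int_\Omega (u+\delta)^p\,dx
    -\frac{p\mu}{1+\ep}\int_\Omega (u+\delta)^{p+\kappa-1}\,dx
  \end{align*}
for all $t\in(0,\tmax)$, discarding the (nonpositive) gradient term. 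The key structural point, and the reason this case needs no lower bound on $\mu$ and no smallness of $p$, is that \eqref{case2} gives $p+\alpha+\ell-1<p+\kappa-1$, so the ``bad'' chemotaxis-production integral has a strictly smaller exponent than the dissipative logistic integral.

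The main step is therefore purely an interpolation between $L^{p+\alpha+\ell-1}$ and $L^{p+\kappa-1}$, absorbed into the good term. Since $\alpha+\ell<\kappa$ we may write, by Young's inequality with exponents $\frac{\kappa-1}{\alpha+\ell-1}$ and its conjugate (valid once $p$ is large enough that $p+\alpha+\ell-1>0$, which already holds because $p>1$ and $\alpha,\ell>0$), for every $\eta>0$ a constant $C_\eta>0$ with
  \[
    \frac{p(p-1)C_SL}{p+\alpha-1}\int_\Omega (u+\delta)^{p+\alpha+\ell-1}\,dx
    \le \eta\int_\Omega (u+\delta)^{p+\kappa-1}\,dx + C_\eta|\Omega|.
  \]
Choosing $\eta=\frac{p\mu}{2(1+\ep)}$ and likewise dominating the linear term $\widetilde{C}_\ep\int_\Omega(u+\delta)^p\,dx$ by $\frac{p\mu}{4(1+\ep)}\int_\Omega(u+\delta)^{p+\kappa-1}+c$ (again Young, since $p<p+\kappa-1$), I obtain
  \[
    \frac{d}{dt}\int_\Omega (u+\delta)^p\,dx
    \le -\frac{p\mu}{4(1+\ep)}\int_\Omega (u+\delta)^{p+\kappa-1}\,dx + c_1
  \]
for all $t\in(0,\tmax)$ with some $c_1>0$. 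By H\"older's inequality $\int_\Omega(u+\delta)^{p+\kappa-1}\ge|\Omega|^{-(\kappa-1)/p}\big(\int_\Omega(u+\delta)^p\big)^{(p+\kappa-1)/p}$, so with $y(t):=\int_\Omega(u+\delta)^p\,dx$ we get $y'\le c_1-c_2 y^{(p+\kappa-1)/p}$, and since $(p+\kappa-1)/p>1$ an ODE comparison argument gives $y(t)\le\max\{y(0),(c_1/c_2)^{p/(p+\kappa-1)}\}$ for all $t\in(0,\tmax)$, hence \eqref{lemLp-2}.

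I do not expect a genuine obstacle here: unlike Case 1, there is no competition with a Gagliardo--Nirenberg exponent and no sign condition on $\mu$, because the logistic dissipation already dominates the full production--chemotaxis contribution at the $L^p$ level. The only mild care needed is bookkeeping of the constants $\ep$, $C_\ep$, $\widetilde{C}_\ep$ inherited from \eqref{I3I4com}, and verifying that the Young-inequality exponent $\frac{\kappa-1}{\alpha+\ell-1}$ is admissible, i.e.\ that $\alpha+\ell-1>0$ whenever we need to divide by it — if $\alpha+\ell\le1$ the integral $\int_\Omega(u+\delta)^{p+\alpha+\ell-1}$ is itself controlled by $\int_\Omega(u+\delta)^{p}+|\Omega|$ and one interpolates that term instead, so the argument closes in every subcase.
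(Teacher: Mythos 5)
Your proposal is correct and follows essentially the same route as the paper: start from \eqref{comesti}, observe that $\alpha+\ell<\kappa$ makes $p+\alpha+\ell-1<p+\kappa-1$, absorb the production term into the logistic dissipation by Young's inequality, then H\"older and an ODE comparison. One small slip: the Young exponents should be $\tfrac{p+\kappa-1}{p+\alpha+\ell-1}$ (applied pointwise to $(u+\delta)$) rather than $\tfrac{\kappa-1}{\alpha+\ell-1}$, so the closing worry about $\alpha+\ell\le1$ is unnecessary — the denominator $p+\alpha+\ell-1$ is automatically positive from $p>1$ and $\alpha,\ell>0$.
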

\begin{proof}
We know that there exist $\ep>0$ and $\widetilde{C}_\ep>0$ such that \eqref{comesti} holds.
By virtue of \eqref{case2}, we have
  \[
    p+\alpha+\ell-1<p+\kappa-1.
  \]
Thus,  by using Young's inequality, we can find $c_1>0$ such that 
  \begin{align}\label{term2-2}
    \frac{p(p-1)C_SL}{p+\alpha-1} \int_\Omega (u+\delta)^{p+\alpha+\ell-1}\,dx
    \le \frac{p\mu}{4(1+\ep)} \int_\Omega (u+\delta)^{p+\kappa-1}\,dx + c_1
  \end{align}
for all $t \in (0,\tmax)$. Combining \eqref{term2-2} with \eqref{comesti} 
and applying H\"{o}lder's inequality, we observe that there exists $c_2>0$ such that
  \[
    \frac{d}{dt} \int_\Omega (u+\delta)^p\,dx 
    \le \widetilde{C}_\ep \int_\Omega (u+\delta)^{p}\,dx 
         -c_2 \left(\int_\Omega (u+\delta)^p\,dx\right)^{\frac{p+\kappa-1}{p}} + c_1
  \]
for all $t \in (0,\tmax)$. Accordingly, we see that \eqref{lemLp-2} holds.
\end{proof}
\subsection{Case $\mathbf{3}$. $\alpha+\ell=\kappa$ and $\mu>\frac{n(\alpha+\ell-m)-2}{2(\alpha-1)+n(\alpha+\ell-m)}C_SL$.}

In order to prove \eqref{Lp} under the condition that $\alpha+\ell=\kappa$ and $\mu>\frac{n(\alpha+\ell-m)-2}{2(\alpha-1)+n(\alpha+\ell-m)}C_SL$,
we first derive the $L^p$-estimate for some $p < 1+\frac{\alpha \mu}{(C_SL-\mu)_{+}}$.
%
%
\begin{lem}\label{someLp}
Let $\mu>0$ and assume that $\alpha>0$, $\kappa>1$ and $\ell>0$ satisfy $\alpha+\ell=\kappa$. Then for any $p \in \left(1,1+\frac{\alpha \mu}{(C_SL-\mu)_{+}}\right)$ there exists $C>0$ such that 
\[
  \int_\Omega u^p(x,t)\,dx \le C \quad \mbox{for all}\ t \in (0,\tmax).
\]
\end{lem}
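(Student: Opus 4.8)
The strategy is again to work with the testing inequality \eqref{comesti} for $\int_\Omega (u+\delta)^p$, but now in the borderline regime $\alpha+\ell=\kappa$ the "bad" integral $\int_\Omega (u+\delta)^{p+\alpha+\ell-1}$ and the dissipative logistic integral $\int_\Omega (u+\delta)^{p+\kappa-1}$ have exactly the same exponent, so we cannot simply absorb one into the other by Young's inequality as in Cases 1 and 2. Instead, we must compare their coefficients directly. After using $(u+\delta)^\kappa\le(1+\ep)u^\kappa+C_\ep\delta$ as before and merging the two terms of order $p+\kappa-1$, the inequality \eqref{comesti} becomes

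\begin{align*}
  \frac{d}{dt}\int_\Omega(u+\delta)^p\,dx
  &\le -\frac{4p(p-1)C_D}{(p+m-1)^2}\int_\Omega|\nabla(u+\delta)^{\frac{p+m-1}{2}}|^2\,dx
  \\
  &\quad\,
   +\left(\frac{p(p-1)C_SL}{p+\alpha-1}-\frac{p\mu}{1+\ep}\right)\int_\Omega(u+\delta)^{p+\kappa-1}\,dx
   +\widetilde C_\ep\int_\Omega(u+\delta)^p\,dx.
\end{align*}

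The point is that the condition $p<1+\frac{\alpha\mu}{(C_SL-\mu)_+}$ is exactly equivalent (when $C_SL>\mu$; when $C_SL\le\mu$ the restriction is vacuous and any $p>1$ works) to $\frac{(p-1)C_SL}{p+\alpha-1}<\mu$, i.e. to $\frac{p(p-1)C_SL}{p+\alpha-1}<p\mu$. Hence, choosing $\ep>0$ small enough that the bracket $\frac{p(p-1)C_SL}{p+\alpha-1}-\frac{p\mu}{1+\ep}$ is still strictly negative, the entire gradient and $(u+\delta)^{p+\kappa-1}$ contribution is nonpositive, and we are left with

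\[
  \frac{d}{dt}\int_\Omega(u+\delta)^p\,dx
  \le -c_1\int_\Omega(u+\delta)^{p+\kappa-1}\,dx+\widetilde C_\ep\int_\Omega(u+\delta)^p\,dx
\]

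for some $c_1>0$. Since $p+\kappa-1>p$, Hölder's inequality on the bounded domain $\Omega$ gives $\int_\Omega(u+\delta)^p\le|\Omega|^{\frac{\kappa-1}{p+\kappa-1}}\big(\int_\Omega(u+\delta)^{p+\kappa-1}\big)^{\frac{p}{p+\kappa-1}}$, so the right-hand side is of the form $-c_2 y^{\frac{p+\kappa-1}{p}}+\widetilde C_\ep y$ with $y:=\int_\Omega(u+\delta)^p$; a standard ODE comparison argument then yields the asserted uniform bound. (Here, unlike in Case 1, we do not need the Gagliardo–Nirenberg interpolation at all, so no restriction relating $p$ to $m$, $n$, $\alpha$, $\ell$ beyond $p>1$ and the coefficient condition is needed — the logistic term alone controls the chemotactic term once the coefficients are in the right order.)

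**Main obstacle.** The only genuinely delicate point is the coefficient bookkeeping: one must verify carefully that $p\in\big(1,1+\frac{\alpha\mu}{(C_SL-\mu)_+}\big)$ translates precisely into strict negativity of $\frac{p(p-1)C_SL}{p+\alpha-1}-p\mu$, and then that this strictness survives replacing $\mu$ by $\frac{\mu}{1+\ep}$ for sufficiently small $\ep>0$ (which it does by continuity, since the inequality is strict). The case split on whether $C_SL>\mu$ or $C_SL\le\mu$ should be handled at the outset so that the fraction $\frac{\alpha\mu}{(C_SL-\mu)_+}$ is interpreted as $+\infty$ in the latter case. Everything else — the computation of \eqref{comesti}, which is already established in the previous subsections, the Hölder step, and the ODE comparison — is routine.
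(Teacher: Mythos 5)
Your proposal is correct and takes essentially the same approach as the paper: start from \eqref{comesti}, observe that when $\alpha+\ell=\kappa$ the chemotactic and logistic terms share the exponent $p+\kappa-1$, verify that $p<1+\frac{\alpha\mu}{(C_SL-\mu)_+}$ forces $\frac{p(p-1)C_SL}{p+\alpha-1}-p\mu<0$ and that this survives replacing $\mu$ by $\frac{\mu}{1+\ep}$ for small $\ep$, drop the resulting nonpositive terms, and close with H\"older plus an ODE comparison. Your explicit case split on $C_SL\lessgtr\mu$ merely spells out what the paper asserts in one line.
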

\begin{proof}
Since the condition $p < 1+\frac{\alpha \mu}{(C_SL-\mu)_{+}}$ implies that
  \[
    \frac{p(p-1)C_SL}{p+\alpha-1} - p \mu<0,
  \]
we can take $\ep>0$ small enough such that
  \[
    \frac{p(p-1)C_SL}{p+\alpha-1} - \frac{p \mu}{1+\ep}<0.
  \]
Thus we have that there exists $\widetilde{C}_\ep>0$ such that
  \begin{align*}
    \frac{d}{dt} \int_\Omega (u+\delta)^p\,dx 
    &\le -\frac{4p(p-1)C_D}{(p+m-1)^2} \int_\Omega |\nabla (u+\delta)^{\frac{p+m-1}{2}}|^2\,dx 
           +\widetilde{C}_\ep \int_\Omega (u+\delta)^{p}\,dx
    \\ \notag
    &\quad\,
           -\left(\frac{p\mu}{1+\ep} - \frac{p(p-1)C_SL}{p+\alpha-1} \right) 
             \int_\Omega (u+\delta)^{p+\kappa-1}\,dx
  \end{align*}
for all $t \in (0,\tmax)$. By using H\"{o}lder's inequality, we obtain $c_1>0$ such that
  \[
    \frac{d}{dt} \int_\Omega (u+\delta)^p\,dx 
    \le \widetilde{C}_\ep \int_\Omega (u+\delta)^{p}\,dx 
         -c_1 \left(\int_\Omega (u+\delta)^p\,dx\right)^{\frac{p+\kappa-1}{p}}
  \]
for all $t \in (0,\tmax)$, and thereby we can arrive at the conclusion.
\end{proof}

Next we establish the $L^p$-estimate for any $p>1$.
%
%
\begin{lem}\label{GBlem3}
Assume that $m\in\mathbb{R}$, $\alpha>0$, $\mu>0$, $\kappa>1$ and $\ell>0$ satisfy 
  \begin{align}\label{case3}
    \alpha+\ell=\kappa
    \quad\mbox{and}\quad
    \mu>\frac{n(\alpha+\ell-m)-2}{2(\alpha-1)+n(\alpha+\ell-m)}C_SL.
  \end{align}
Then for any $p>1$ there exists $C>0$ such that 
\begin{align}\label{lemLp-3}
  \int_\Omega u^p(x,t)\,dx \le C \quad \mbox{for all}\ t \in (0,\tmax).
\end{align}
\end{lem}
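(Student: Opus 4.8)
\textbf{Proof proposal for Lemma \ref{GBlem3}.}

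The plan is to bootstrap from Lemma \ref{someLp}, which already supplies an $L^p$-bound for every $p$ in the (possibly small, but nontrivial) interval $\left(1,1+\frac{\alpha\mu}{(C_SL-\mu)_+}\right)$. Starting from the differential inequality \eqref{comesti} with $\alpha+\ell=\kappa$, the right-hand side gathers the two critical-order terms
$\frac{p(p-1)C_SL}{p+\alpha-1}\int_\Omega(u+\delta)^{p+\kappa-1}\,dx$ and $-\frac{p\mu}{1+\ep}\int_\Omega(u+\delta)^{p+\kappa-1}\,dx$ into a single integral with coefficient $\frac{p(p-1)C_SL}{p+\alpha-1}-\frac{p\mu}{1+\ep}$, plus the dissipation term $-\frac{4p(p-1)C_D}{(p+m-1)^2}\int_\Omega|\nabla(u+\delta)^{\frac{p+m-1}{2}}|^2\,dx$ and the harmless lower-order term $\widetilde C_\ep\int_\Omega(u+\delta)^p\,dx$. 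When $p$ is large enough that the bracketed coefficient is positive, the sign is wrong and we can no longer close the estimate by absorbing into the logistic damping as in Cases 1 and 2; instead I would interpolate. Using the Gagliardo--Nirenberg inequality exactly as in \eqref{GN} (now with exponent $p+\alpha+\ell-1=p+\kappa-1$), one writes $\int_\Omega(u+\delta)^{p+\kappa-1}\,dx$ in terms of $\|\nabla(u+\delta)^{\frac{p+m-1}{2}}\|_{L^2}^{2\theta\cdot\frac{p+\kappa-1}{p+m-1}}$ and a lower $L^q$-norm that is already controlled by the induction hypothesis (take the base exponent in GN to be some $r$ with $1<r<1+\frac{\alpha\mu}{(C_SL-\mu)_+}$, so $\|(u+\delta)^{\frac{p+m-1}{2}}\|_{L^{2r/(p+m-1)}}$ is bounded by Lemma \ref{someLp} rather than merely by the $L^1$-bound of Lemma \ref{L1}). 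The exponent condition \eqref{case3} on $\mu$ is precisely what guarantees that $2\theta\cdot\frac{p+\kappa-1}{p+m-1}<2$ in the relevant regime, so that the gradient term produced by Young's inequality can be absorbed into the dissipation term with a fraction of its coefficient, leaving a superlinearly damped ODE $y'\le\widetilde C_\ep y - c\,y^{(p+\kappa-1)/p}+c'$ and hence \eqref{lemLp-3}.

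More concretely, I would set up an induction on $p$: suppose \eqref{lemLp-3} holds for all exponents up to some $p_0>1$, and prove it for $p\in(p_0,p_0+h)$ for a fixed step size $h>0$ depending only on the data. In the differential inequality for $\frac{d}{dt}\int_\Omega(u+\delta)^p$, split the critical integral as $\int_\Omega(u+\delta)^{p+\kappa-1}\,dx$ and estimate it by Gagliardo--Nirenberg with the low-norm exponent chosen inside the interval where boundedness is already known; the GN exponent $\theta$ is
\[
  \theta=\frac{\frac{p+m-1}{2}-\frac{p+m-1}{2}\cdot\frac{r}{p+\kappa-1}}{\frac{p+m-1}{2}+\frac{1}{n}-\frac12},
\]
and the key algebraic check is that $\frac{2(p+\kappa-1)}{p+m-1}\theta<2$. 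Carrying out this inequality and using that $\frac{p(p-1)C_SL}{p+\alpha-1}$ stays bounded (in fact its ratio to the dissipation coefficient $\frac{p(p-1)C_D}{(p+m-1)^2}$ is comparable to $(p+m-1)^2/(p+\alpha-1)$, which for large $p$ grows only linearly, while the GN smallness leaves room), one verifies that the coefficient in front of $\|\nabla(u+\delta)^{(p+m-1)/2}\|_{L^2}^2$ after Young's inequality can be made smaller than $\frac{4p(p-1)C_D}{(p+m-1)^2}$. This yields
\[
  \frac{d}{dt}\int_\Omega(u+\delta)^p\,dx\le \widetilde C_\ep\int_\Omega(u+\delta)^p\,dx - c\left(\int_\Omega(u+\delta)^p\,dx\right)^{\frac{p+\kappa-1}{p}}+c',
\]
and an ODE comparison argument finishes the induction step. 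Since each step advances $p$ by a fixed amount $h$, finitely many iterations reach any prescribed $p>1$.

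The main obstacle, and the point where the hypothesis \eqref{case3} on $\mu$ is genuinely used, is verifying that the interpolation constant stays favorable uniformly across the induction: one must check that the ratio of the coefficient of the bad critical term to the coefficient of the good gradient term, combined with the Gagliardo--Nirenberg exponent $\frac{2(p+\kappa-1)}{p+m-1}\theta$, always leaves the gradient term absorbable. This is exactly a quantitative inequality in $p,m,\alpha,\ell,\mu$, and the threshold $\mu>\frac{n(\alpha+\ell-m)-2}{2(\alpha-1)+n(\alpha+\ell-m)}C_SL$ in \eqref{case3} should drop out of it after letting $\ep\to0$ and noting that the worst case is either $p\to1^+$ or $p\to\infty$ depending on the sign of $\alpha-1$. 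A secondary technical nuisance is the case $p+m-1\le0$ (possible since $m\in\mathbb{R}$), where the substitution $(u+\delta)^{(p+m-1)/2}$ must be handled with care — but since the base exponent $r$ in the GN step is chosen freely in $(1,1+\frac{\alpha\mu}{(C_SL-\mu)_+})$ and $p$ in each step is kept just above $p_0$, one can always arrange $p+m-1>0$ once $p_0$ is large enough (the first few steps where $p+m-1\le0$ fall entirely under Lemma \ref{someLp} or Cases 1–2), so this does not obstruct the argument.
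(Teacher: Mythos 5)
Your high-level strategy—use Lemma \ref{someLp} to get an $L^{p_0}$ bound for some $p_0>1$ and then feed that into the Gagliardo--Nirenberg step in place of the $L^1$ bound—is exactly the paper's route, and the first paragraph of your proposal is essentially correct. However, there is a genuine gap in the way you propose to carry it out, and a misdiagnosis of where the hypothesis on $\mu$ enters.

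The crucial fact you do not articulate is that, with base exponent $p_0$ in the Gagliardo--Nirenberg step, the exponent on the gradient factor satisfies
\[
  \frac{2(p+\alpha+\ell-1)}{p+m-1}\,\tilde\theta-2
  =\frac{\frac{\alpha+\ell-m}{p_0}-\frac{2}{n}}
         {\frac{p+m-1}{2p_0}+\frac{1}{n}-\frac{1}{2}},
\]
whose \emph{sign depends on $p_0$ alone and not on the target $p$}. It is strictly negative precisely when $p_0>\frac{n}{2}(\alpha+\ell-m)$. This means the absorption has a sharp threshold at $p_0=\frac{n}{2}(\alpha+\ell-m)$: if your base exponent is below it, Young's inequality cannot absorb the $\int(u+\delta)^{p+\kappa-1}$ term into the dissipation for \emph{any} target $p$, no matter how small the increment; and if it is above it, a single Gagliardo--Nirenberg step already handles \emph{all} large $p$. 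Consequently the induction on $p$ with a fixed step $h$ that you propose in your second paragraph does not advance: unless the base already exceeds $\frac{n}{2}(\alpha+\ell-m)$ you gain nothing, and once it does you never need to iterate. Likewise, the picture in your third paragraph—that the threshold on $\mu$ ``drops out after letting $\ep\to0$, with the worst case at $p\to1^+$ or $p\to\infty$'' and that it is governed by a ratio of the production and dissipation coefficients—is not what is going on; once the Gagliardo--Nirenberg exponent is strictly below $2$, Young lets you absorb into an arbitrarily small fraction of the dissipation regardless of those coefficients.

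What is actually needed, and what the $\mu$-hypothesis in \eqref{case3} encodes after a short algebraic rearrangement, is precisely
\[
  1+\frac{\alpha\mu}{(C_SL-\mu)_{+}}>\frac{n}{2}(\alpha+\ell-m),
\]
which lets one choose $p_0\in\left(\frac{n}{2}(\alpha+\ell-m),\,1+\frac{\alpha\mu}{(C_SL-\mu)_{+}}\right)$ so that Lemma \ref{someLp} gives the $L^{p_0}$ bound and the Gagliardo--Nirenberg exponent is subcritical for all admissible $p$. Once you make that observation the rest of the argument closes in one step, with $p$ chosen large enough (in particular $p>p_0+1-m$ to guarantee $p+m-1>0$, which also resolves your concern about negative $m$; your suggestion to defer that case to ``Cases 1--2'' is not available, as those hypotheses are disjoint from Case 3).
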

\begin{proof}
The second condition of \eqref{case3} yields that
  \[
    \left(1+\frac{\alpha \mu}{(C_SL-\mu)_{+}}\right) - \frac{n}{2}(\alpha+\ell-m) >0.
  \]
Therefore we can pick some 
$p_0 \in \left(\frac{n}{2}(\alpha+\ell-m),1+\frac{\alpha \mu}{(C_SL-\mu)_{+}}\right)$.
Thanks to Lemma \ref{someLp}, we see that there exists $c_1>0$ such that
  \begin{align}\label{Lp0}
    \int_\Omega u^{p_0}\,dx \le c_1
  \end{align}
for all $t \in (0,\tmax)$. Moreover, we choose 
  \[
    p>\max\left\{p_0,p_0+1-m,p_0+1-(\alpha+\ell),\frac{n}{2}(1-m)+\left(\frac{n}{2}-1\right)(\alpha+\ell-1)\right\}
  \]
and take $\ep>0$ and $\widetilde{C}_\ep>0$ such that \eqref{comesti} holds.
Applying the Gagliardo--Nirenberg inequality, we have
  \begin{align*}
    \int_\Omega (u+\delta)^{p+\alpha+\ell-1}\,dx
    &=\|(u+\delta)^\frac{p+m-1}{2}\|_{L^{\frac{2(p+\alpha+\ell-1)}{p+m-1}}(\Omega)}
        ^{\frac{2(p+\alpha+\ell-1)}{p+m-1}}
    \\ \notag
    &\le c_2\|\nabla(u+\delta)^\frac{p+m-1}{2}\|_{L^2(\Omega)}
                ^{\frac{2(p+\alpha+\ell-1)}{p+m-1}\tilde{\theta}}
                \|(u+\delta)^\frac{p+m-1}{2}\|_{L^{\frac{2p_0}{p+m-1}}(\Omega)}
                ^{\frac{2(p+\alpha+\ell-1)}{p+m-1}(1-\tilde{\theta})}
    \\ \notag
    &\quad\,
           + c_2\|(u+\delta)^\frac{p+m-1}{2}\|_{L^{\frac{2p_0}{p+m-1}}(\Omega)}
                  ^{\frac{2(p+\alpha+\ell-1)}{p+m-1}}
  \end{align*}
for all $t \in (0,\tmax)$ with some $c_2>0$, where
  \[
    \tilde{\theta}:=\frac{\frac{p+m-1}{2p_0}-\frac{p+m-1}{2(p+\alpha+\ell-1)}}
                                {\frac{p+m-1}{2p_0}+\frac{1}{n}-\frac{1}{2}} 
    \in (0,1).
  \]
Here, we note from $p_0>\frac{n}{2}(\alpha+\ell-m)$ that
  \begin{align*}
    \frac{2(p+\alpha+\ell-1)}{p+m-1}\tilde{\theta} - 2
    =\frac{\frac{p+\alpha+\ell-1}{p_0}-1-\left(\frac{p+m-1}{p_0}+\frac{2}{n}-1\right)}
               {\frac{p+m-1}{2p_0}+\frac{1}{n}-\frac{1}{2}}
    =\frac{\frac{\alpha+\ell-m}{p_0}-\frac{2}{n}}
               {\frac{p+m-1}{2p_0}+\frac{1}{n}-\frac{1}{2}} < 0.
  \end{align*}
Thus, due to \eqref{Lp0} and Young's inequality, we can show that there is $c_3>0$ such that
  \begin{align}\label{term2-3}
    \frac{p(p-1)C_SL}{p+\alpha-1} \int_\Omega (u+\delta)^{p+\alpha+\ell-1}\,dx
    \le \frac{2p(p-1)C_D}{(p+m-1)^2} \int_\Omega |\nabla (u+\delta)^{\frac{p+m-1}{2}}|^2\,dx
    +c_3
  \end{align}
for all $t \in (0,\tmax)$. From \eqref{comesti} and \eqref{term2-3} we infer that 
  \[
    \frac{d}{dt} \int_\Omega (u+\delta)^p\,dx 
    \le \widetilde{C}_\ep \int_\Omega (u+\delta)^{p}\,dx 
         -\frac{p\mu}{1+\ep} \int_\Omega (u+\delta)^{p+\kappa-1}\,dx + c_3
  \]
for all $t \in (0,\tmax)$, which implies that \eqref{lemLp-3} holds.
\end{proof}
\subsection{Proof of Theorem \ref{thm1}}
In this subsection we complete the proof of boundedness.
\begin{proof}[Proof of Theorem \ref{thm1}]
Thanks to \eqref{thm1condi1} and \eqref{thm1condi2}, we can apply Lemmas \ref{GBlem1}, \ref{GBlem2} and \ref{GBlem3}.
Therefore, for any $p>1$ we can find $c_1>0$ such that 
  \[
    \int_\Omega u^p\,dx \le c_1 
  \]
for all $t \in (0,\tmax)$.
By the Moser iteration (see \cite[Lemma A.1]{T-W}), we obtain
  \[
    \|u(\cdot,t)\|_{L^\infty(\Omega)} \le \infty
  \]
for all $t \in (0,\tmax)$, which concludes the proof.
\end{proof}

\section{Finite-time blow-up}\label{blow-up}
In this section we will show Theorem \ref{thm2}. 
In the following let $\Omega:=B_R(0) \subset \mathbb{R}^n\ (n\ge1)$ be a ball with some $R>0$ and let $\lambda>0$, $\mu>0$ and $\kappa>1$.
Also, we suppose that $D$, $S$ and $f$ fulfill \eqref{Dclass}, \eqref{Sclass} and \eqref{fclass}, respectively, and
$u_0$ satisfies \eqref{u0}.
Moreover, introducing $r:=|x|$, we denote by $(u,v)=(u(r,t),v(r,t))$ the radially symmetric local solution of \eqref{JL} on $[0,\tmax)$. 
Based on \cite{J-L}, we define the mass accumulation function $w$ such that
  \begin{align}\label{defw}
    w(s,t):=\int^{s^\frac{1}{n}}_{0} \rho^{n-1}u(\rho,t)\,d\rho
    \quad\mbox{for}\ s \in [0,R^n]\ \mbox{and}\ t \in [0,\tmax).
  \end{align}
This implies that 
  \[
    w_s(s,t)=\frac{1}{n}u(s^\frac{1}{n},t)
    \quad \mbox{and} \quad
    w_{ss}(s,t)=\frac{1}{n^2}s^{\frac{1}{n}-1}u_r(s^\frac{1}{n},t)
  \]
for all $s \in (0,R^n)$ and $t \in (0,\tmax)$.
Thus we have from the first equation in \eqref{JL} that
  \begin{align}\label{firsteq}
    w_t = n^2s^{2-\frac{2}{n}}D(nw_s)w_{ss}
            - s^{1-\frac{1}{n}}S(nw_s)v_r
            + \lambda w - n^{\kappa-1}\mu \int^{s}_{0} w_s^\kappa(\sigma,t)\,d\sigma
  \end{align}
for all $s \in (0,R^n)$ and $t \in (0,\tmax)$, and see from the second equation in \eqref{JL} that
  \begin{align}\label{secondeq}
    s^{1-\frac{1}{n}}v_r = \overline{M_f}(t)\frac{s}{n} 
                                  - \frac{1}{n}\int^{s}_{0}f(nw_s(\sigma,t))\,d\sigma
  \end{align}
for all $s \in (0,R^n)$ and $t \in (0,\tmax)$. 
From \eqref{firsteq} and \eqref{secondeq} it follows that
  \begin{align}\label{wt}
    w_t &\ge n^2s^{2-\frac{2}{n}}D(nw_s)w_{ss}
            - \frac{1}{n} sS(nw_s)\overline{M_f}(t)
    \\ \notag
    &\quad\,
            + \frac{1}{n} S(nw_s)\int^{s}_{0}f(nw_s(\sigma,t))\,d\sigma
            - n^{\kappa-1}\mu \int^{s}_{0} w_s^\kappa(\sigma,t)\,d\sigma
  \end{align}
for all $s \in (0,R^n)$ and $t \in (0,\tmax)$. 

\medskip

In Subsection \ref{pre} we recall some lemmas in order to obtain inequalities for a derivative of a moment-type functional. In Subsection \ref{estimates} we establish some estimates which lead to differential inequalities for the moment-type functional. The proof of Theorem \ref{blowup} is shown in Subsection \ref{blowup}.
Finally, we give open problems in Subsection \ref{open}.
%
\subsection{Preliminaries}\label{pre}

We first derive the concavity of $w$.
%
%
\begin{lem}\label{con_w}
Assume that $u_0$ satisfies \eqref{u0}. Then
  \[
    u_r(r,t) \le 0 \quad \mbox{for all}\ r \in (0,R) \ \mbox{and}\ t \in (0,\tmax),
  \]
that is, for $w$ as in \eqref{defw}
  \[
    w_{ss}(s,t) \le 0 \quad \mbox{for all}\ s \in (0,R^n) \ \mbox{and}\ t \in (0,\tmax).
  \]
\end{lem}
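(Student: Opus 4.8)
The plan is to prove the spatial monotonicity $u_r(r,t)\le 0$ for radial solutions by a comparison/maximum-principle argument applied to the function $z:=r^{n-1}u_r$, which is the natural quantity appearing when one differentiates the radial form of the first equation in \eqref{JL} in $r$. First I would write the first equation in \eqref{JL} in radial coordinates, namely
\[
  u_t = r^{1-n}\bigl(r^{n-1}D(u)u_r\bigr)_r - r^{1-n}\bigl(r^{n-1}S(u)v_r\bigr)_r + \lambda u - \mu u^{\kappa},
\]
and use the radial form of the second equation, $r^{1-n}(r^{n-1}v_r)_r = f(u) - \overline{M_f}(t)$, to replace the divergence of the chemotactic flux. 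Differentiating the resulting equation in $r$ and multiplying by $r^{n-1}$ yields a parabolic equation for $z=r^{n-1}u_r$ of the form $z_t = D(u)z_{rr} + (\text{lower-order terms})\,z_r + c(r,t)\,z + (\text{sign-definite source})$, where the crucial point is that the term coming from $-S(u)v_r$ differentiated in $r$ produces, after using the second equation, a contribution proportional to $-r^{n-1}S(u)\bigl(f(u)-\overline{M_f}(t)\bigr)$ plus terms linear in $u_r$ and $v_r$; one has to check that everything not proportional to $z$ or $z_r$ has a sign compatible with the comparison principle, or can be absorbed. The logistic terms $\lambda u - \mu u^\kappa$ differentiate to $(\lambda - \mu\kappa u^{\kappa-1})u_r$, which is a harmless coefficient times $z$.

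The key steps in order: (1) rewrite both equations radially and derive the PDE satisfied by $z=r^{n-1}u_r$ on $(0,R)\times(0,\tmax)$; (2) verify the boundary conditions $z(0,t)=0$ (from smoothness and radial symmetry at the origin) and $z(R,t)=R^{n-1}u_r(R,t)=0$ (from the no-flux condition $\nabla u\cdot\nu=0$, which in the radial setting forces $u_r(R,t)=0$ — here one should note $D(u)u_r - S(u)v_r=0$ and $v_r=0$ at $r=R$, so indeed $u_r(R,t)=0$); (3) check the initial condition: since $u_0$ is nonincreasing in $|x|$ and $C^\beta$, one gets $z(\cdot,0)=r^{n-1}u_{0,r}\le 0$ in the appropriate weak/classical sense (a mollification or approximation argument may be needed since $u_0$ is only Hölder continuous, but the solution is smooth for $t>0$, so one can instead apply the argument on $(t_0,\tmax)$ for small $t_0>0$ and let $t_0\to 0$, using that $u(\cdot,t_0)\to u_0$ preserves monotonicity); (4) confirm that the zeroth-order coefficient $c(r,t)$ in the equation for $z$ is bounded on compact time intervals and that the non-$z$, non-$z_r$ source term is $\le 0$ wherever $z=0$ would be a candidate for a positive interior maximum — this is what lets the parabolic maximum principle conclude $z\le 0$ throughout; (5) translate $z\le 0$ back to $u_r\le 0$ and then to $w_{ss}\le 0$ via the identity $w_{ss}(s,t)=\frac{1}{n^2}s^{1/n-1}u_r(s^{1/n},t)$ recorded just before the lemma.

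The main obstacle I expect is step (4): controlling the source term in the $z$-equation. After differentiating $-r^{1-n}(r^{n-1}S(u)v_r)_r = -S(u)f(u) + S(u)\overline{M_f}(t) - S'(u)u_r v_r$ in $r$, the genuinely dangerous piece is the one involving $v_r$ again, since $v_r$ does not have a fixed sign a priori; however, using \eqref{secondeq} one sees $s^{1-1/n}v_r = \frac{s}{n}\overline{M_f}(t) - \frac{1}{n}\int_0^s f(nw_s)\,d\sigma$, and because $u\ge 0$ and $f$ is nondecreasing with $f(0)\ge 0$, one can bound $\int_0^s f(nw_s)\,d\sigma$ against $s\,\overline{M_f}(t)$ when $u$ is nonincreasing — but that is precisely what we are trying to prove, so the argument must be genuinely inductive/parabolic rather than a static estimate. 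The clean way around this is the standard one (as in \cite{W-D, C-W}): keep $v_r$ as an explicit integral functional of $w_s=\frac1n u$, observe that the term $S'(u)u_r v_r$ is a coefficient ($S'(u)v_r$, bounded locally) times $u_r$ hence times $z$, and observe that $-S(u)f(u)' \le 0$ automatically since $S\ge0$, $f$ nondecreasing, and $u_r\le0$ would give $(f(u))_r\le0$ — so one runs the maximum principle on the set where $z>0$ and derives a contradiction from the sign of the frozen source there. Making this rigorous with only $C^2$ regularity of $D,S$ and $C^1$ of $f$ on $(0,\infty)$, and dealing with the degeneracy/behavior as $r\to0$, is the delicate bookkeeping; everything else is routine.
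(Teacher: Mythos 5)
Your strategy---write the first equation in radial form, substitute $\Delta v = \overline{M_f}(t)-f(u)$ from the elliptic equation, differentiate in $r$, and run a parabolic maximum principle on $u_r$ (or on $z=r^{n-1}u_r$)---is exactly the one the paper has in mind: the text defers to Winkler (2018, nonlinear production), Lemma~2.2, and Black--Fuest--Lankeit, Lemma~5.1, and those proofs are of precisely this type, with the logistic term only adding the harmless coefficient $\lambda-\mu\kappa u^{\kappa-1}$ in front of $u_r$ as you note. One small computational slip: $-\nabla\cdot(S(u)\nabla v)=-S(u)\Delta v-S'(u)u_rv_r=S(u)f(u)-S(u)\overline{M_f}(t)-S'(u)u_rv_r$, i.e.\ the opposite sign on the first two terms from what you wrote; this does not change the structure of the argument but must be right if one later appeals to a sign.

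The one step in your sketch that would actually fail is the fallback you propose for the merely H\"older initial data: ``apply the argument on $(t_0,\tmax)$ for small $t_0>0$ and let $t_0\to 0$.'' To start the maximum principle at time $t_0$ you need $u_r(\cdot,t_0)\le 0$ as an input, which is the very conclusion you are after, so that route is circular. The correct fix is the other alternative you mention in passing: mollify $u_0$ by smooth, radially symmetric, nonincreasing data $u_{0,\varepsilon}$, establish $u_{\varepsilon,r}\le 0$ for the regularised solutions (which have $C^1$ initial data), and pass to the limit via continuous dependence on the initial data. Also, your anticipated ``main obstacle'' essentially evaporates on closer inspection: once $\Delta v$ is substituted, differentiating the radial $u$-equation once in $r$ yields an equation in which \emph{every} term is a multiple of $\zeta:=u_r$, $\zeta_r$ or $\zeta_{rr}$ (no inhomogeneous source survives, since $\partial_r\overline{M_f}(t)=0$), so $\zeta\equiv0$ is a solution of the frozen-coefficient linear equation; the comparison principle on compact subcylinders $[\varepsilon,R]\times[\tau,T]$, where $u$, $v_r$, $v_{rr}$ and the coefficients are bounded, together with $\zeta(0,\cdot)=\zeta(R,\cdot)=0$, then gives $\zeta\le0$ directly, and no sign analysis of $v_r$ or of a source is needed.
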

\begin{proof}
By an argument similar to that in the proof of \cite[Lemma 2.2]{W-2018_nonlinear} or \cite[Lemma 5.1]{B-F-L}, we can prove this lemma.
\end{proof}

Given $s_0 \in (0,R^n)$ and $\gamma \in (-\infty,1)$, we set the moment-type functional 
  \[
    \phi(t):=\int^{s_0}_{0} s^{-\gamma}(s_0-s)w(s,t)\,ds
    \quad\mbox{for}\ t \in [0,\tmax).
  \]
Moreover, we introduce the functional
  \[
    \psi(t):=\int^{s_0}_{0} s^{1-\gamma}(s_0-s)w_s^{\alpha+\ell}(s,t)\,ds
    \quad\mbox{for}\ t \in (0,\tmax)
  \]
and
  \begin{align*}
    S_{\phi}:=\left\{ t \in (0,\tmax) \;\middle|\; \phi(t) \ge \frac{M-s_0}{(1-\gamma)(2-\gamma)\omega_n} \cdot s_0^{2-\gamma} \right\}.
  \end{align*}
Next we recall the following two lemmas which were showed in \cite{W-2018_nonlinear}.
%
%
\begin{lem}\label{westi}
Assume that $u_0$ satisfies \eqref{u0} and let $s_0 \in (0,R^n)$ and $\gamma \in (-\infty,1)$. Then 
  \[
    w\left(\frac{s_0}{2},t\right) 
    \le \frac{1}{\omega_n} \cdot \left(M-\frac{4s_0}{2^\gamma(3-\gamma)}\right)
    \quad\mbox{for all}\ t \in S_{\phi}.
  \]
\end{lem}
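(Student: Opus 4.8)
Here is the approach I would take to prove Lemma~\ref{westi}. The point is that it needs only the monotonicity of $s\mapsto w(s,t)$ in its first argument together with the a priori control of the total mass; the concavity of $w$ from Lemma~\ref{con_w} plays no role. Throughout, $M$ denotes the constant with $\omega_n w(R^n,t)=\int_\Omega u(\cdot,t)\le M$ for all $t\in(0,\tmax)$ (available from the mass estimate of Lemma~\ref{L1}), and I abbreviate
\[
  A:=\int_0^{s_0/2}s^{-\gamma}(s_0-s)\,ds,\qquad A+B:=\int_0^{s_0}s^{-\gamma}(s_0-s)\,ds .
\]

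First I would split the integral defining $\phi$ at $s=s_0/2$. Since $w_s=\tfrac1n u\ge0$, the map $s\mapsto w(s,t)$ is nondecreasing, so on $(0,s_0/2)$ we have $w(s,t)\le w(s_0/2,t)$, while on $(s_0/2,s_0)$ the same monotonicity together with the mass bound gives $w(s,t)\le w(R^n,t)\le M/\omega_n$. Substituting these into the definition of $\phi$ yields
\[
  \phi(t)\le w\!\left(\tfrac{s_0}{2},t\right)A+\frac{M}{\omega_n}\,B\qquad\text{for all }t\in(0,\tmax).
\]

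Next I would evaluate the two elementary integrals. A direct calculation gives $A+B=\dfrac{s_0^{2-\gamma}}{(1-\gamma)(2-\gamma)}$ and $A=\dfrac{(3-\gamma)\,s_0^{2-\gamma}}{2^{2-\gamma}(1-\gamma)(2-\gamma)}$, hence
\[
  \frac{A+B}{A}=\frac{2^{2-\gamma}}{3-\gamma}=\frac{4}{2^\gamma(3-\gamma)} .
\]
This identity is the source of the constant $\tfrac{4s_0}{2^\gamma(3-\gamma)}$ in the statement, and the factor $(1-\gamma)(2-\gamma)$ in $A+B$ is exactly the one appearing in the definition of $S_\phi$. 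Then, for $t\in S_\phi$, I would insert the defining inequality $\phi(t)\ge\dfrac{M-s_0}{(1-\gamma)(2-\gamma)\omega_n}\,s_0^{2-\gamma}=\dfrac{M-s_0}{\omega_n}(A+B)$ into the bound for $\phi(t)$ above, obtaining
\[
  \frac{M-s_0}{\omega_n}(A+B)\le w\!\left(\tfrac{s_0}{2},t\right)A+\frac{M}{\omega_n}\,B ,
\]
and isolate $w(s_0/2,t)$ by dividing through by $A>0$ and using $\tfrac{A+B}{A}=\tfrac{4}{2^\gamma(3-\gamma)}$; this collapses to exactly the estimate of Lemma~\ref{westi}.

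The argument is entirely elementary, with no PDE input beyond the nonnegativity of $u$ (which yields the monotonicity of $w$) and the mass bound of Lemma~\ref{L1}. I expect the only point demanding care to be the bookkeeping in the last two steps: the two integral evaluations must be carried out exactly, and the mass bound must enter only through $w(R^n,t)$ — not through $w(s_0,t)$ with a spurious factor — since it is precisely these that pin down the shape $M-\tfrac{4s_0}{2^\gamma(3-\gamma)}$ of the right-hand side together with the threshold defining $S_\phi$.
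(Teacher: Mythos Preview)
The paper does not supply its own proof of this lemma; it merely says the result is ``recalled'' from \cite{W-2018_nonlinear}. Your argument is precisely the standard one used there: split the $\phi$-integral at $s_0/2$, bound $w$ on each piece by monotonicity and the global mass estimate, and compare with the threshold defining $S_\phi$. The integral identities you quote are correct, in particular $A=\dfrac{(3-\gamma)s_0^{2-\gamma}}{2^{2-\gamma}(1-\gamma)(2-\gamma)}$ and $\dfrac{A+B}{A}=\dfrac{4}{2^\gamma(3-\gamma)}$.

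There is, however, one point you should not gloss over. When you isolate $w(s_0/2,t)$ from
\[
  \frac{M-s_0}{\omega_n}(A+B)\ \le\ w\!\left(\tfrac{s_0}{2},t\right)A+\frac{M}{\omega_n}\,B,
\]
what you obtain is
\[
  w\!\left(\tfrac{s_0}{2},t\right)\ \ge\ \frac{1}{\omega_n}\left(M-s_0\cdot\frac{A+B}{A}\right)
  \ =\ \frac{1}{\omega_n}\left(M-\frac{4s_0}{2^\gamma(3-\gamma)}\right),
\]
a \emph{lower} bound. The displayed ``$\le$'' in the lemma as printed here is a typographical slip; the correct (and useful) inequality is ``$\ge$'', which is exactly what your chain produces and what the cited reference proves. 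It is also the direction required in the next lemma: a lower bound on $w(s_0/2,t)$ forces the remaining mass $w(R^n,t)-w(s_0/2,t)$ to be small, and combining this with the concavity from Lemma~\ref{con_w} yields the pointwise control on $w_s$ outside $(0,s_0/2)$ that underlies the bound \eqref{olM} on $\overline{M_f}$ and the constant $f_\gamma$ in \eqref{olC}. So your method is right; just state explicitly that you are proving the lower bound and that the printed inequality sign is in error, rather than saying the computation ``collapses to exactly'' the displayed estimate.
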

%
The following lemma is obtained from Lemmas \ref{con_w} and \ref{westi} 
(see \cite[Lemma 3.2]{W-2018_nonlinear}).
%
%
\begin{lem}
Assume that $u_0$ satisfies \eqref{u0} and let $s_0 \in \left(0,\frac{R^n}{4}\right]$ and $\gamma \in (-\infty,1)$.
Then
  \begin{align}\label{olM}
    \overline{M_f}(t) 
    \le f_\gamma + \frac{1}{2s} \int^{s}_{0} f(nw_s(\sigma,t))\,d\sigma
    \quad\mbox{for all}\ s \in (0,s_0) \ \mbox{and}\ t \in S_{\phi},
  \end{align}
where 
  \begin{align}\label{olC}
    f_\gamma:=f\left(\frac{8n}{2^\gamma(3-\gamma)\omega_n}\right)>0.
  \end{align}
\end{lem}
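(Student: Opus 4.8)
The plan is to estimate $\overline{M_f}(t)$ from above for $t \in S_\phi$ by combining the monotonicity of $f$ with the pointwise bound on $w$ supplied by Lemma \ref{westi}, and then to split $\Omega$ into the inner ball $B_{s_0^{1/n}}(0)$ and its complement. First I would write $\overline{M_f}(t) = \frac{1}{|\Omega|}\int_\Omega f(u(x,t))\,dx$ in radial coordinates as $\frac{n\omega_n}{|\Omega|}\int_0^{R^n} f(nw_s(\sigma,t))\,d\sigma$ (using $|\Omega| = \omega_n R^n$), and then decompose the integral as $\int_0^{R^n} = \int_0^{s} + \int_s^{R^n}$ for a fixed $s \in (0,s_0)$. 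The contribution of $\int_0^s f(nw_s)$ gives, up to the constant $\frac{n\omega_n}{|\Omega|} \le \frac{1}{R^n} \le \frac{1}{2s}$ after choosing $s_0 \le R^n/4$ appropriately (here one must be slightly careful: $\frac{n\omega_n}{\omega_n R^n} = \frac{n}{R^n}$, so the constant comparison needs $s < \frac{R^n}{2n}$, which is where the hypothesis $s_0 \in (0, R^n/4]$ and $s<s_0$ enters, possibly with an adjustment), the second term in \eqref{olM}.

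The main work is to bound the outer piece $\frac{n\omega_n}{|\Omega|}\int_s^{R^n} f(nw_s(\sigma,t))\,d\sigma$ by the constant $f_\gamma$. Since $f$ is nondecreasing and $w_s(\cdot,t)$ is nonincreasing in $s$ by the concavity $w_{ss}\le 0$ from Lemma \ref{con_w}, for every $\sigma \ge s_0/2$ we have $nw_s(\sigma,t) \le nw_s(s_0/2,t)$. To control $w_s(s_0/2,t)$ I would use that $w(\cdot,t)$ is concave and nondecreasing, so $w_s(s_0/2,t) \le \frac{w(s_0/2,t) - w(0,t)}{s_0/2} = \frac{2}{s_0}w(s_0/2,t)$, and then invoke Lemma \ref{westi}, which for $t \in S_\phi$ gives $w(s_0/2,t) \le \frac{1}{\omega_n}\bigl(M - \frac{4s_0}{2^\gamma(3-\gamma)}\bigr) \le \frac{M}{\omega_n}$. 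This yields $nw_s(\sigma,t) \le \frac{2nM}{s_0\omega_n}$ for $\sigma \ge s_0/2$; however, to get the clean constant $f_\gamma = f\bigl(\frac{8n}{2^\gamma(3-\gamma)\omega_n}\bigr)$ appearing in \eqref{olC} one needs a sharper estimate, so instead I would argue that for $\sigma \in [s,R^n]$ with $s < s_0$, using $w_s$ nonincreasing and $w(\sigma,t) \le w(R^n,t) = \frac{M}{n\omega_n}\cdot$ (appropriate normalization), one controls $w_s$ at the relevant point by a secant slope over an interval of length comparable to $s_0$, producing exactly the argument $\frac{8n}{2^\gamma(3-\gamma)\omega_n}$ of $f_\gamma$. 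Concretely, for $\sigma \ge s_0$, $w_s(\sigma,t) \le \frac{w(s_0,t) - w(s_0/2,t)}{s_0/2}$ is not directly available, so the cleaner route is: $w_s(\sigma,t) \le w_s(s_0/4,t) \le \frac{w(s_0/2,t)}{s_0/4} = \frac{4}{s_0}w(s_0/2,t)$ for $\sigma \ge s_0/4$, combined with Lemma \ref{westi}'s bound $w(s_0/2,t) \le \frac{M}{\omega_n}$ and the elementary fact $M \le$ (a multiple of $s_0$)\dots — in fact the bound in Lemma \ref{westi} contains the term $-\frac{4s_0}{2^\gamma(3-\gamma)}$, and rearranging $w(s_0/2,t) \le \frac{1}{\omega_n}(M - \frac{4s_0}{2^\gamma(3-\gamma)})$ together with the nonnegativity of $w$ forces $M \ge \frac{4s_0}{2^\gamma(3-\gamma)}$ to be consistent, but what we actually need is just to feed $\frac{4}{s_0} \cdot \frac{1}{\omega_n}\cdot\frac{s_0 \cdot 2}{2^\gamma(3-\gamma)} = \frac{8}{2^\gamma(3-\gamma)\omega_n}$ — I would trace the constants carefully so the secant slope estimate lands precisely on $\frac{8n}{2^\gamma(3-\gamma)\omega_n}$ after multiplying by $n$.

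Then, since $f$ is nondecreasing, $f(nw_s(\sigma,t)) \le f\bigl(\frac{8n}{2^\gamma(3-\gamma)\omega_n}\bigr) = f_\gamma$ for all $\sigma \ge s_0/4$, hence $\frac{n\omega_n}{|\Omega|}\int_s^{R^n} f(nw_s(\sigma,t))\,d\sigma \le \frac{n\omega_n}{|\Omega|}\cdot R^n \cdot f_\gamma = n f_\gamma$ — and again one adjusts the normalization so the final constant is exactly $f_\gamma$ (likely the definition of $\overline{M_f}$ and the radial change of variables absorb the factor $n$, since $\int_\Omega g\,dx = n\omega_n\int_0^{R^n}g\,d\sigma$ only after identifying $w_s = \frac1n u$, which introduces compensating $n$'s). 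Adding the two pieces gives \eqref{olM}. The main obstacle, as the above indicates, is purely bookkeeping: getting the constants in the secant-slope estimates and the radial normalizations to line up exactly with $f_\gamma$ as defined in \eqref{olC}; the conceptual content (concavity $\Rightarrow$ $w_s$ monotone $\Rightarrow$ pointwise bound on $w_s$ away from the origin via a secant slope $\Rightarrow$ apply nondecreasing $f$) is straightforward and is precisely the argument of \cite[Lemma 3.2]{W-2018_nonlinear}, to which the authors defer.
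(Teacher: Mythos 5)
There are genuine gaps in the proposed argument, both in the bookkeeping and in the structure of the estimate. First, the radial change of variables is off by a factor of $n$: with $\sigma=r^n$, $d\sigma=nr^{n-1}dr$, one gets $\int_\Omega f(u)\,dx=\omega_n\int_0^{R^n}f(nw_s(\sigma,t))\,d\sigma$ when $\omega_n=|B_1|$ (or $\frac{\omega_n}{n}\int_0^{R^n}$ if $\omega_n$ is the unit-sphere surface area), and in either case $\overline{M_f}(t)=\frac{1}{R^n}\int_0^{R^n}f(nw_s(\sigma,t))\,d\sigma$; the prefactor is $\frac{1}{R^n}$, not $\frac{n}{R^n}$, so there is no dimension-dependent obstruction in comparing with $\frac{1}{2s}$, and the ``$s<\frac{R^n}{2n}$'' caveat you raise is an artifact of the miscount rather than a real constraint.

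The more serious gap is structural. You split $\int_0^{R^n}=\int_0^s+\int_s^{R^n}$ and attempt to dominate the entire tail $\frac{1}{R^n}\int_s^{R^n}f(nw_s)\,d\sigma$ by $f_\gamma$, but this cannot work: for $\sigma$ just above a small $s$, $w_s(\sigma,t)$ is not controlled by any universal constant. One needs a three-way split $\int_0^s+\int_s^{s_0}+\int_{s_0}^{R^n}$. For the middle piece, the monotonicity you invoke is used in a different way: $f(nw_s(\sigma,t))\le f(nw_s(s,t))\le\frac{1}{s}\int_0^s f(nw_s(\sigma',t))\,d\sigma'$ for $\sigma\ge s$, so that $\frac{1}{R^n}\int_s^{s_0}f(nw_s)\,d\sigma\le\frac{s_0}{R^n}\cdot\frac{1}{s}\int_0^s f(nw_s)\,d\sigma\le\frac{1}{4s}\int_0^s f(nw_s)\,d\sigma$; combined with $\frac{1}{R^n}\int_0^s\le\frac{1}{4s}\int_0^s$ this produces the $\frac{1}{2s}$ coefficient, which otherwise has no source in your argument. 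For the outer piece $\int_{s_0}^{R^n}$, the secant you use is the wrong one: $w_s(s_0/2,t)\le\frac{2}{s_0}w(s_0/2,t)\le\frac{2M}{s_0\omega_n}$ blows up as $s_0\to 0$ and cannot give $f_\gamma$. The estimate that does work --- and which you dismiss as ``not directly available'' although concavity gives it immediately --- is $w_s(\sigma,t)\le w_s(s_0,t)\le\frac{w(s_0,t)-w(s_0/2,t)}{s_0/2}\le\frac{2}{s_0}\bigl(w(R^n,t)-w(s_0/2,t)\bigr)$ for $\sigma\ge s_0$. This pairs with a \emph{lower} bound on $w(s_0/2,t)$; indeed Lemma~\ref{westi} as printed (with $\le$) would be useless here and must be read as $w(s_0/2,t)\ge\frac{1}{\omega_n}\bigl(M-\frac{4s_0}{2^\gamma(3-\gamma)}\bigr)$, which together with $w(R^n,t)\le\frac{M}{\omega_n}$ yields $w(R^n,t)-w(s_0/2,t)\le\frac{4s_0}{2^\gamma(3-\gamma)\omega_n}$ and hence $nw_s(\sigma,t)\le\frac{8n}{2^\gamma(3-\gamma)\omega_n}$ for $\sigma\ge s_0$, giving exactly $f_\gamma$. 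Your final paragraph fabricates the intermediate bound $w(s_0/2,t)\lesssim\frac{2s_0}{2^\gamma(3-\gamma)\omega_n}$ to make the constants match; that bound is false in general and does not follow from Lemma~\ref{westi}, so this is not ``purely bookkeeping'' --- a different secant and a different reading of Lemma~\ref{westi} are required.
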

%
In order to derive differential inequalities for $\phi$ we establish an estimate for $\phi'$.
%
%
\begin{lem}\label{lemphi}
Assume that $f$ fulfills \eqref{fineq2} and $u_0$ satisfies \eqref{u0}. Let $s_0 \in \left(0,\frac{R^n}{4}\right]$ and $\gamma \in (-\infty,1)$ as well as
  \begin{align}\label{gammacondi}
    \gamma<2-\frac{2}{n}.
  \end{align}
Then 
  \begin{align}\label{phiineq}
    \phi'(t) 
    &\ge \frac{n^{\ell-1}}{2} L 
           \int^{s_0}_{0} s^{1-\gamma}(s_0-s)S(nw_s(s,t))w_s^\ell(s,t)\,ds
    \\ \notag
    &\quad\,
         - \frac{f_\gamma}{n}
            \int^{s_0}_{0} s^{1-\gamma}(s_0-s)S(nw_s(s,t))\,ds
    \\ \notag
    &\quad\,
         + n^2 \int^{s_0}_{0} s^{2-\frac{2}{n}-\gamma}(s_0-s)D(nw_s(s,t))w_{ss}(s,t)\,ds
    \\ \notag
    &\quad\,
         - n^{\kappa-1} \mu 
            \int^{s_0}_{0} s^{-\gamma}(s_0-s)
            \left\{ \int^{s}_{0} w_s^\kappa(\sigma,t)\,d\sigma \right\}\,ds
    \\ \notag
    &=: I_1 + I_2 + I_3 + I_4
  \end{align}
for all $t \in S_{\phi}$, where $f_\gamma>0$ is defined as \eqref{olC}.
\end{lem}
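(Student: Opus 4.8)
The plan is to differentiate $\phi$ under the integral sign, substitute the pointwise lower bound \eqref{wt} for $w_t$, and then handle the resulting four groups of terms. Concretely, for $t\in S_\phi$ we write
\[
  \phi'(t)=\int_0^{s_0} s^{-\gamma}(s_0-s)w_t(s,t)\,ds
  \ge \int_0^{s_0}s^{-\gamma}(s_0-s)\Bigl[n^2s^{2-\frac2n}D(nw_s)w_{ss}
  -\tfrac1n sS(nw_s)\overline{M_f}(t)
  +\tfrac1n S(nw_s)\!\int_0^s f(nw_s)\,d\sigma
  -n^{\kappa-1}\mu\!\int_0^s w_s^\kappa\,d\sigma\Bigr]ds,
\]
where I have discarded the term $\lambda w\ge 0$ (legitimate since $w\ge0$) to simplify the bookkeeping — or alternatively retain it, since it only helps. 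The third term on the right already has the right shape for part of $I_1$; the second term is the one to be controlled.

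The key move is to insert the bound \eqref{olM} from the preceding lemma: for $s\in(0,s_0)$ and $t\in S_\phi$,
\[
  \overline{M_f}(t)\le f_\gamma+\frac{1}{2s}\int_0^s f(nw_s(\sigma,t))\,d\sigma.
\]
Multiplying by $-\tfrac1n s S(nw_s)$ (which is $\le 0$ since $S\ge0$, so the inequality direction is preserved after negation only if we are careful — here we are bounding $-\tfrac1n sS(nw_s)\overline{M_f}$ from below, and since $\overline{M_f}\ge 0$ we use the upper bound on $\overline{M_f}$ to get a lower bound on $-\tfrac1n sS(nw_s)\overline{M_f}$):
\[
  -\frac1n sS(nw_s)\overline{M_f}(t)
  \ge -\frac{f_\gamma}{n}sS(nw_s)-\frac{1}{2n}S(nw_s)\int_0^s f(nw_s(\sigma,t))\,d\sigma.
\]
Integrating against $s^{-\gamma}(s_0-s)\,ds$, the second piece here combines with the third term of the $w_t$-expansion: $\tfrac1n-\tfrac1{2n}=\tfrac1{2n}$, leaving a factor $\tfrac{1}{2n}S(nw_s)\int_0^s f(nw_s)\,d\sigma$. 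Then invoke the production lower bound \eqref{fineq2}, $f(\xi)\ge L\xi^\ell$, so that $\int_0^s f(nw_s(\sigma,t))\,d\sigma\ge L n^\ell \int_0^s w_s^\ell(\sigma,t)\,d\sigma$; since $w_s$ is nonincreasing in $s$ (by concavity of $w$, Lemma \ref{con_w}, $w_{ss}\le0$), we bound $\int_0^s w_s^\ell(\sigma,t)\,d\sigma\ge s\,w_s^\ell(s,t)$. This upgrades the $\tfrac{1}{2n}S(nw_s)\int_0^s f\,d\sigma$ term to $\tfrac{Ln^{\ell}}{2n}\,s\,S(nw_s)w_s^\ell(s,t)=\tfrac{Ln^{\ell-1}}{2}\,s\,S(nw_s)w_s^\ell$, which after multiplying by $s^{-\gamma}(s_0-s)$ is precisely the integrand of $I_1$. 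The leftover $-\tfrac{f_\gamma}{n}sS(nw_s)$ term, integrated against $s^{-\gamma}(s_0-s)$, is exactly $I_2$, while the diffusion term gives $I_3$ and the logistic dissipation gives $I_4$ verbatim.

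The main technical point to be careful about — and the place where condition \eqref{gammacondi}, $\gamma<2-\tfrac2n$, is used — is the \emph{justification of differentiation under the integral sign} and the convergence of the integrals near $s=0$: with $\gamma<2-\tfrac2n$ the weight $s^{2-\frac2n-\gamma}$ in $I_3$ has positive exponent, so that term is well defined despite $D(nw_s)w_{ss}$ only being continuous on $(0,R^n)$, and similarly $s^{-\gamma}$ is integrable near $0$ since $\gamma<1$. One should also note that $w$ is $C^1$ in $t$ with derivative given by \eqref{firsteq} on $(0,R^n)\times(0,\tmax)$ by parabolic regularity, and that all the integrands are dominated locally uniformly in $t$, so Leibniz's rule applies. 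Once these integrability and differentiation issues are dispatched, the argument is a direct chain of the three pointwise inequalities \eqref{wt}, \eqref{olM}, \eqref{fineq2} together with the monotonicity of $w_s$, and nothing deeper is needed; I do not anticipate any genuine obstacle beyond this careful bookkeeping of signs and weights.
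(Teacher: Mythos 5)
Your proof is correct and follows essentially the same route as the paper: differentiate $\phi$ under the integral sign, apply \eqref{wt}, insert \eqref{olM} (with the coefficient arithmetic $\tfrac1n-\tfrac1{2n}=\tfrac1{2n}$), and then use \eqref{fineq2} together with the monotonicity $w_s(\sigma,t)\ge w_s(s,t)$ for $\sigma\le s$ from Lemma \ref{con_w} to produce $I_1$; the remaining terms fall out as $I_2$, $I_3$, $I_4$ exactly. The only difference is that you add explicit remarks on integrability and Leibniz's rule, which the paper leaves implicit.
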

\begin{proof}
Invoking \eqref{wt} and \eqref{olM}, we have
  \begin{align}\label{wtolM}
    w_t &\ge n^2s^{2-\frac{2}{n}}D(nw_s)w_{ss}
                  - \frac{f_\gamma}{n} sS(nw_s)
    \\ \notag
    &\quad\,
                  + \frac{1}{2n}S(nw_s) \int^{s}_{0} f(nw_s(\sigma,t))\,d\sigma
            - n^{\kappa-1}\mu \int^{s}_{0} w_s^\kappa(\sigma,t)\,d\sigma
  \end{align}
for all $s \in \left(0,\frac{R^n}{4}\right]$ and $t \in S_{\phi}$.
Here, we note from Lemma \ref{con_w} that 
  \[
    w_s(\sigma,t) \ge w_s(s,t)\quad (\sigma \le s).
  \]
Thanks to this inequality and \eqref{fineq2}, we see that
  \begin{align}\label{wsl}
    S(sw_s) \int^{s}_{0} f(nw_s(\sigma,t))\,d\sigma
    \ge L S(nw_s) \int^{s}_{0} (nw_s(\sigma,t))^\ell\,d\sigma
    \ge n^\ell L sS(nw_s)w_s^\ell
  \end{align}
for all $s \in \left(0,\frac{R^n}{4}\right]$ and $t \in S_{\phi}$.
By virtue of \eqref{wtolM} and \eqref{wsl}, we attain \eqref{phiineq}.
\end{proof}
%
\subsection{Estimates for the four integrals in the inequality \eqref{phiineq}}\label{estimates}

In this subsection, in order to derive different inequalities for $\phi$ we show estimates for the four integrals in \eqref{phiineq} by using lower bound for $\psi$.
We first provide the estimate for $I_1+I_2$ in the following lemma.
%
%
\begin{lem}
Assume that $S$ and $f$ fulfill \eqref{DSineq2} and \eqref{fineq2}, and $u_0$ satisfies \eqref{u0}. Let $s_0 \in \left(0,\frac{R^n}{4}\right]$ and $\gamma \in (-\infty,1)$.
Suppose that $\alpha>0$ and $\ell>0$ satisfy 
  \begin{align}\label{alcondi}
    \alpha+\ell>1.
  \end{align}
Then there exists 
$C_1=C_1(\alpha,\ell,L,C_S)>0$ and $C_2=C_2(R,\alpha,\ell,L,\gamma)>0$ such that
  \begin{align}\label{I1I2ineq}
    I_1+I_2 \ge C_1 \psi(t) - C_2 s_0^{3-\gamma}
  \end{align}
for all $t \in S_{\phi}$.
\end{lem}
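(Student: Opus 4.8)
The plan is to estimate $I_1$ from below and $I_2$ from below (i.e., control the negative term $I_2$) and then absorb $I_2$ into a fraction of $I_1$. First I would rewrite $I_1$ using the lower bound \eqref{DSineq2} for $S$, namely $S(nw_s) \ge C_S (nw_s)(nw_s+\delta)^{\alpha-1}$. The key observation is that, since $\alpha+\ell>1$ by \eqref{alcondi}, the integrand of $I_1$ behaves like $s^{1-\gamma}(s_0-s)w_s^{\alpha+\ell}$ up to constants: indeed $(nw_s)(nw_s+\delta)^{\alpha-1}w_s^\ell \ge c\, w_s^{\alpha+\ell}$ whenever $\alpha\ge1$ (drop $\delta$) and, when $\alpha<1$, one uses $(nw_s+\delta)^{\alpha-1}\ge (nw_s+\delta\cdot\text{diam})^{\alpha-1}$ together with the fact that $w_s = \frac1n u \le$ (a bound coming from the $L^1$ control, or more simply just keeping $(\,\cdot+\delta)^{\alpha-1}$ and noting it is bounded below on the relevant range only after using $w_s \le$ something) — actually the cleaner route, mirroring \cite{W-2018_nonlinear}, is: for $\xi\ge0$, $C_S\xi(\xi+\delta)^{\alpha-1}\xi^\ell \ge c_\star \xi^{\alpha+\ell} - c_{\star\star}$ for suitable constants depending on $\alpha,\ell,\delta,C_S$ (this elementary inequality holds because $\xi(\xi+\delta)^{\alpha-1} \ge \xi^\alpha$ when $\alpha\ge1$ and $\ge \xi^\alpha - C$ when $\alpha<1$ after a Young-type splitting). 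Applying this pointwise with $\xi = nw_s$ gives

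\begin{align*}
  I_1 &= \frac{n^{\ell-1}}{2}L\int_0^{s_0} s^{1-\gamma}(s_0-s)S(nw_s)w_s^\ell\,ds
  \ge c_\star' \int_0^{s_0} s^{1-\gamma}(s_0-s)w_s^{\alpha+\ell}\,ds
      - c_{\star\star}'\int_0^{s_0} s^{1-\gamma}(s_0-s)\,ds,
\end{align*}
and the last integral equals a constant times $s_0^{3-\gamma}$ (it is $\int_0^{s_0}s^{1-\gamma}(s_0-s)\,ds = s_0^{3-\gamma}/((2-\gamma)(3-\gamma))$, finite since $\gamma<1<2$). So $I_1 \ge 2C_1\psi(t) - c\, s_0^{3-\gamma}$ for a suitable $C_1=C_1(\alpha,\ell,L,C_S)>0$.

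Next I would bound $I_2$ from below. Using $S(nw_s)\le C_S(nw_s)(nw_s+\delta)^{\alpha-1} \le c\,(w_s^\alpha + 1)$ pointwise (again by the elementary inequality, now the upper version), we get
\[
  -I_2 = \frac{f_\gamma}{n}\int_0^{s_0} s^{1-\gamma}(s_0-s)S(nw_s)\,ds
  \le c\,f_\gamma\int_0^{s_0} s^{1-\gamma}(s_0-s)\big(w_s^\alpha+1\big)\,ds.
\]
The term with the constant $1$ again contributes $c\,s_0^{3-\gamma}$. For the term with $w_s^\alpha$, since $\alpha < \alpha+\ell$, Young's inequality gives $w_s^\alpha \le \eta\, w_s^{\alpha+\ell} + c(\eta)$ for any $\eta>0$; choosing $\eta$ small enough that $c\,f_\gamma\,\eta \le C_1$ and recalling $f_\gamma$ depends only on $\gamma$ through \eqref{olC}, we obtain $-I_2 \le C_1\psi(t) + c'\, s_0^{3-\gamma}$, i.e. $I_2 \ge -C_1\psi(t) - c'\, s_0^{3-\gamma}$.

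Adding the two estimates yields $I_1+I_2 \ge 2C_1\psi(t) - c\,s_0^{3-\gamma} - C_1\psi(t) - c'\,s_0^{3-\gamma} = C_1\psi(t) - C_2 s_0^{3-\gamma}$ with $C_2=C_2(R,\alpha,\ell,L,\gamma)>0$ (the $R$-dependence enters only because we may want to bound $s_0^{3-\gamma}$ uniformly, but in fact the stated inequality keeps $s_0^{3-\gamma}$ explicit, so $C_2$ just collects the absolute constants and $f_\gamma$); this is \eqref{I1I2ineq}. The main obstacle, and the place requiring care, is the elementary pointwise inequality relating $\xi(\xi+\delta)^{\alpha-1}$ to $\xi^\alpha$ in both directions when $\alpha\ne1$: for $\alpha\ge1$ one has $\xi^\alpha \le \xi(\xi+\delta)^{\alpha-1} \le 2^{\alpha-1}(\xi^\alpha + \delta^{\alpha-1}\xi)$ and the extra $\xi$ is handled by Young against $\xi^{\alpha+\ell}$; for $0<\alpha<1$ one has $\xi(\xi+\delta)^{\alpha-1}\le \xi^\alpha$ directly (upper bound free) while the lower bound $\xi(\xi+\delta)^{\alpha-1}\ge c\xi^\alpha - c'$ needs $(\xi+\delta)^{\alpha-1}\ge (\xi+\delta)^{\alpha-1}$ split at $\xi=\delta$. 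Once this pointwise lemma is in hand, the rest is just Young's inequality and the exact evaluation of $\int_0^{s_0}s^{1-\gamma}(s_0-s)\,ds$, both routine.
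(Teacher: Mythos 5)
There is a genuine gap in your treatment of $I_2$. To bound $-I_2 = \frac{f_\gamma}{n}\int_0^{s_0}s^{1-\gamma}(s_0-s)S(nw_s)\,ds$ from above you invoke $S(nw_s)\le C_S\,(nw_s)(nw_s+\delta)^{\alpha-1}$, but this is an \emph{upper} bound on $S$, and none is available here: the lemma is stated under \eqref{DSineq2}, which gives only the \emph{lower} bound $S(\xi)\ge C_S\,\xi(\xi+\delta)^{\alpha-1}$ (the orientation of the inequalities in the blow-up section is the opposite of the one in the boundedness section). The structural hypothesis \eqref{Sclass} says only that $S$ is nonnegative and nondecreasing, so $S$ could grow arbitrarily fast and the step $S(nw_s)\le c(w_s^\alpha+1)$ has no justification. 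Once that step is removed, your absorption of $-I_2$ into a fraction of $\psi$ via Young's inequality collapses, because $\int S(nw_s)\,ds$ need not be controlled by any power of $w_s$.

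The way the paper avoids this is to never replace $S(nw_s)$ in $I_2$ by a prototype: both $I_1$ and $I_2$ contain the same factor $S(nw_s)$, and the negative term is absorbed into a fraction of $I_1$ rather than into $\psi$ directly. Concretely, one splits by the indicator of $\{nw_s\ge\overline{C}\}$ with $\overline{C}:=(4f_\gamma/L)^{1/\ell}$. Where $nw_s\ge\overline{C}$, one multiplies the $I_2$-integrand by $\left(nw_s/\overline{C}\right)^\ell\ge1$ to recover exactly $\frac{1}{2}I_1$ in absolute value; where $nw_s<\overline{C}$, monotonicity of $S$ gives $S(nw_s)\le S(\overline{C})$, a fixed constant, and the remaining integral is $O(s_0^{3-\gamma})$. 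This yields $I_1+I_2\ge\frac{1}{2}I_1-c\,s_0^{3-\gamma}$, and only \emph{then} does one insert the lower bound \eqref{DSineq2} for $S$ inside $I_1$ and carry out the case distinction $\alpha\ge1$ versus $\alpha<1$ (which you sketch correctly) to produce $C_1\psi(t)$. Your pointwise treatment of $I_1$ is along the same lines as the paper's and works; it is the handling of $I_2$ that needs to be redone along these lines.
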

\begin{proof}
We define the function $\chi_{A}$ as the characteristic function of the set $A$ and 
put 
  \[
    \overline{C}:=\left(\frac{4f_\gamma}{L}\right)^\frac{1}{\ell}>0.
  \]
As to $I_2$, noticing that $S$ is nondecreasing, we see that
  \begin{align}\label{I2ineq1}
    I_2 &= - \frac{f_\gamma}{n} \int^{s_0}_{0} 
                  \chi_{\{nw_s(\cdot,t) \ge \overline{C}\}}
                  s^{1-\gamma}(s_0-s)S(nw_s)\,ds
    \\ \notag
    &\quad\,
           - \frac{f_\gamma}{n} \int^{s_0}_{0} 
                \chi_{\{nw_s(\cdot,t) < \overline{C}\}}
                s^{1-\gamma}(s_0-s)S(nw_s)\,ds
    \\ \notag
         &\ge - \frac{f_\gamma}{n} \int^{s_0}_{0} 
                      \chi_{\{nw_s(\cdot,t) \ge \overline{C}\}}
                      s^{1-\gamma}(s_0-s)S(nw_s)\,ds
    \\ \notag
    &\quad\,
                 - \frac{f_\gamma}{n}S(\overline{C}) \int^{s_0}_{0} 
                      \chi_{\{nw_s(\cdot,t) < \overline{C}\}}
                      s^{1-\gamma}(s_0-s)\,ds
  \end{align}
for all $t \in S_{\phi}$.
Moreover, we have that
  \begin{align}\label{I2ineq2}
    &- \frac{f_\gamma}{n} \int^{s_0}_{0} 
           \chi_{\{nw_s(\cdot,t) \ge \overline{C}\}}
           s^{1-\gamma}(s_0-s)S(nw_s)\,ds
    \\ \notag
    &\quad\,
    \ge - \frac{f_\gamma}{n} \int^{s_0}_{0} 
               \chi_{\{nw_s(\cdot,t) \ge \overline{C}\}}
               s^{1-\gamma}(s_0-s)S(nw_s) \left(\frac{nw_s}{\overline{C}}\right)^\ell\,ds
    \\ \notag
    &\quad\,
    = - \frac{n^{\ell-1}}{4}L \int^{s_0}_{0} s^{1-\gamma}(s_0-s)S(nw_s) w_s^\ell\,ds
  \end{align}
and
  \begin{align}\label{I2ineq3}
    - \frac{f_\gamma}{n}S(\overline{C}) \int^{s_0}_{0} 
                      \chi_{\{nw_s(\cdot,t) < \overline{C}\}}
                      s^{1-\gamma}(s_0-s)\,ds
    \ge - \frac{f_\gamma S(\overline{C})}{(2-\gamma)(3-\gamma)n} s_0^{3-\gamma}
  \end{align}
for all $t \in S_{\phi}$.
In light of \eqref{I2ineq1}--\eqref{I2ineq3}, we observe that
  \begin{align}\label{I1I2alpha}
    I_1+I_2 
    \ge \frac{n^{\ell-1}}{4} L \int^{s_0}_{0} s^{1-\gamma}(s_0-s)S(nw_s)w_s^\ell\,ds
          - \frac{f_\gamma S(\overline{C})}{(2-\gamma)(3-\gamma)n} s_0^{3-\gamma}
  \end{align}
for all $t \in S_{\phi}$.
Recalling \eqref{DSineq2}, we can obtain
  \begin{align}\label{Salpha}
    \int^{s_0}_{0} s^{1-\gamma}(s_0-s)S(nw_s)w_s^\ell\,ds
    \ge nC_S\int^{s_0}_{0} s^{1-\gamma}(s_0-s)(nw_s+\delta)^{\alpha-1}w_s^{\ell+1}\,ds
  \end{align}
for all $t \in S_\phi$.
If $\alpha \ge 1$, then it follows from 
$(nw_s+\delta)^{\alpha-1}\ge(nw_s)^{\alpha-1}$ that
  \begin{align}\label{alpha1}
    nC_S\int^{s_0}_{0} s^{1-\gamma}(s_0-s)(nw_s+\delta)^{\alpha-1}w_s^{\ell+1}\,ds
    \ge n^\alpha C_S
          \psi(t)
  \end{align}
for all $t \in S_\phi$. Hence, in the case $\alpha \ge 1$ a combination of \eqref{I1I2alpha}, \eqref{Salpha} and \eqref{alpha1} yields \eqref{I1I2ineq}.
On the other hand, if $\alpha<1$, 
then we can show from $w_s^{\ell+1}=\frac{1}{n}w_s^\ell(nw_s+\delta-\delta)$ that
  \begin{align}\label{alpha2}
    &nC_S\int^{s_0}_{0} s^{1-\gamma}(s_0-s)(nw_s+\delta)^{\alpha-1}w_s^{\ell+1}\,ds
    \\ \notag
    &\quad\,
    = nC_S\int^{s_0}_{0} \chi_{\{nw_s(\cdot,t) \ge \delta\}}
                  s^{1-\gamma}(s_0-s)(nw_s+\delta)^{\alpha-1}w_s^{\ell+1}\,ds
    \\ \notag
    &\quad\,\quad\,
        + nC_S\int^{s_0}_{0} \chi_{\{nw_s(\cdot,t) < \delta\}}
                  s^{1-\gamma}(s_0-s)(nw_s+\delta)^{\alpha-1}w_s^{\ell+1}\,ds
    \\ \notag
    &\quad\,
    \ge \frac{n^\alpha}{2^{1-\alpha}} 
           C_S\int^{s_0}_{0} \chi_{\{nw_s(\cdot,t) \ge \delta\}}
                  s^{1-\gamma}(s_0-s)w_s^{\alpha+\ell}\,ds
    \\ \notag
    &\quad\,\quad\,
         + C_S\int^{s_0}_{0} \chi_{\{nw_s(\cdot,t) < \delta\}}
                  s^{1-\gamma}(s_0-s)(nw_s+\delta)^{\alpha}w_s^{\ell}\,ds
    \\ \notag
    &\quad\,\quad\,
         - \delta C_S\int^{s_0}_{0} \chi_{\{nw_s(\cdot,t) < \delta\}}
                  s^{1-\gamma}(s_0-s)(nw_s+\delta)^{\alpha-1}w_s^{\ell}\,ds
    \\ \notag
    &\quad\,
    \ge \frac{n^\alpha}{2^{1-\alpha}} 
           C_S\int^{s_0}_{0} \chi_{\{nw_s(\cdot,t) \ge \delta\}}
                  s^{1-\gamma}(s_0-s)w_s^{\alpha+\ell}\,ds
    \\ \notag
    &\quad\,\quad\,
         + n^\alpha C_S\int^{s_0}_{0} \chi_{\{nw_s(\cdot,t) < \delta\}}
                  s^{1-\gamma}(s_0-s)w_s^{\alpha+\ell}\,ds
    \\ \notag
    &\quad\,\quad\,
         - \delta C_S\int^{s_0}_{0} \chi_{\{nw_s(\cdot,t) < \delta\}}
                  s^{1-\gamma}(s_0-s)(nw_s+\delta)^{\alpha-1}w_s^{\ell}\,ds
  \end{align}
for all $t \in S_\phi$. Noting from $\alpha<1$ that
  \[
    (nw_s+\delta)^{\alpha-1}w_s^\ell 
    = \left(\frac{nw_s}{nw_s+\delta}\right)^{1-\alpha} n^{\alpha-1} w_s^{\alpha+\ell-1}
    \le n^{\alpha-1} w_s^{\alpha+\ell-1},
  \]
we establish that
  \begin{align*}
    &- \delta C_S\int^{s_0}_{0} \chi_{\{nw_s(\cdot,t) < \delta\}}
         s^{1-\gamma}(s_0-s)(nw_s+\delta)^{\alpha-1}w_s^{\ell}\,ds
    \\ \notag
    &\quad\,
    \ge - n^{\alpha-1} C_S \int^{s_0}_{0} \chi_{\{nw_s(\cdot,t) < \delta\}}
                  s^{1-\gamma}(s_0-s)w_s^{\alpha+\ell-1}\,ds
    \\ \notag
    &\quad\,
    \ge - n^{\alpha-1} C_S \int^{s_0}_{0} \chi_{\{nw_s(\cdot,t) < \delta\}}
                  s^{1-\gamma}(s_0-s)\,ds
    \\ \notag
    &\quad\,
    \ge - \frac{n^{\alpha-1} C_S}{(2-\gamma)(3-\gamma)} s_0^{3-\gamma}
  \end{align*}
for all $t \in S_\phi$.
From this inequality and \eqref{alpha2} we see that
  \begin{align}\label{alpha3}
    nC_S\int^{s_0}_{0} s^{1-\gamma}(s_0-s)(nw_s+\delta)^{\alpha-1}w_s^{\ell+1}\,ds
    \ge \frac{n^\alpha}{2^{1-\alpha}} 
           C_S
           \psi(t)
          - \frac{n^{\alpha-1} C_S}{(2-\gamma)(3-\gamma)} s_0^{3-\gamma}
  \end{align}
for all $t \in S_\phi$. Thus, in the case $\alpha<1$, from \eqref{I1I2alpha}, \eqref{Salpha} and \eqref{alpha3} we attain \eqref{I1I2ineq}.
\end{proof}

Next, we show the estimate for $I_3$.
%
%
\begin{lem}
Assume that $D$ fulfills \eqref{DSineq2} and $u_0$ satisfies \eqref{u0}. Let $s_0 \in \left(0,\frac{R^n}{4}\right]$.
Suppose that $m \in \mathbb{R}$, $\alpha>0$, $\ell>0$ and $\gamma \in (-\infty,1)$ satisfy 
  \begin{alignat}{4}
    \label{mcondi1}
    &\mbox{if}\ m \ge 0, &\quad\mbox{then}\quad &\alpha + \ell > m &\quad &\mbox{and} &\quad 2-\frac{2}{n}\cdot\frac{\alpha+\ell}{\alpha+\ell-m} &> \gamma,
  \\ \label{mcondi2}
    &\mbox{if}\ m < 0, &\mbox{then}\quad & & & & \quad 2-\frac{2}{n}  &> \gamma.
  \end{alignat}
Then there exist $\ep>0$, $C_1=C_1(m,\alpha,\ell,\delta,\gamma,C_D)>0$ 
and  $C_2=C_2(m,\delta,\gamma,C_D)>0$ such that
  \begin{align}\label{I3ineq}
    I_3 \ge
    \begin{cases}
      -C_1 s_0^{(3-\gamma)\frac{\alpha+\ell-m}{\alpha+\ell}-\frac{2}{n}} 
            \psi^{\frac{m}{\alpha+\ell}}(t)
      - C_2 s_0^{3-\gamma-\frac{2}{n}} \quad &\mbox{if}\ m>0,
    \\ 
      -C_1 s_0^{(3-\gamma)\frac{\alpha+\ell-\ep}{\alpha+\ell}-\frac{2}{n}} 
            \psi^{\frac{\ep}{\alpha+\ell}}(t)
      - C_2 s_0^{3-\gamma-\frac{2}{n}} \quad &\mbox{if}\ m=0, 
    \\
      -C_2 s_0^{3-\gamma-\frac{2}{n}} \quad &\mbox{if}\ m<0
    \end{cases}
  \end{align}
for all $t \in S_\phi$.
\end{lem}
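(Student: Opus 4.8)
The goal is to bound $I_3$ from below, where $I_3$ carries the "good" diffusion term multiplied by the sign-indefinite weight $s^{2-\frac{2}{n}-\gamma}(s_0-s)$ against $D(nw_s)w_{ss}$, and $w_{ss}\le 0$ by Lemma~\ref{con_w}. The natural first step is to integrate by parts in $s$ to move the derivative off $w_{ss}$: writing $D(nw_s)w_{ss}\,ds$ as (up to a constant) $\frac{d}{ds}\bigl(\text{antiderivative of } D\bigr)$, one obtains after integration by parts a boundary term (which vanishes at $s=0$ because of the positive power $s^{2-\frac{2}{n}-\gamma}$, using $\gamma<2-\frac2n$ from \eqref{gammacondi}, and at $s=s_0$ because of the factor $(s_0-s)$) plus a remaining integral of the antiderivative of $D$ against $\frac{d}{ds}\bigl(s^{2-\frac2n-\gamma}(s_0-s)\bigr)$. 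Using $D(\xi)\le C_D(\xi+\delta)^{m-1}$ from \eqref{DSineq2}, the antiderivative of $D(nw_s)$ is controlled by a constant times $(nw_s+\delta)^{m}$ when $m\ne 0$ (with the case $m=0$ producing a logarithm $\log(nw_s+\delta)$, which is exactly the source of the separate treatment), so the remaining integral is essentially $\int_0^{s_0} s^{1-\frac2n-\gamma}(s_0-s)(nw_s+\delta)^{m}\,ds$ up to harmless lower-order pieces.

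Once in this form, I split according to the sign of $m$. If $m<0$, then $(nw_s+\delta)^{m}\le \delta^{m}$ is simply bounded, so the integral is dominated by $\int_0^{s_0}s^{1-\frac2n-\gamma}(s_0-s)\,ds \le C\, s_0^{3-\gamma-\frac2n}$, giving the third line of \eqref{I3ineq} with no $\psi$-dependence; here one only needs $\gamma<2-\frac2n$, which is \eqref{mcondi2}. If $m>0$, I want to absorb the factor $(nw_s+\delta)^{m}$ into a power of $\psi(t)=\int_0^{s_0}s^{1-\gamma}(s_0-s)w_s^{\alpha+\ell}\,ds$ via Hölder's inequality: with exponents $\frac{\alpha+\ell}{m}$ and $\frac{\alpha+\ell}{\alpha+\ell-m}$ (which requires $\alpha+\ell>m$, i.e. the first half of \eqref{mcondi1}), the part $\int s^{1-\frac2n-\gamma}(s_0-s)w_s^{m}\,ds$ splits into $\psi(t)^{m/(\alpha+\ell)}$ times an integral of a pure power of $s$ times $(s_0-s)$; checking the $s$-exponent and the $(s_0-s)$-exponent are $>-1$ uses precisely the second half of \eqref{mcondi1}, namely $2-\frac2n\cdot\frac{\alpha+\ell}{\alpha+\ell-m}>\gamma$, and a short computation of that residual integral produces the power $s_0^{(3-\gamma)\frac{\alpha+\ell-m}{\alpha+\ell}-\frac2n}$, yielding the first line of \eqref{I3ineq} (plus the $+\delta$ cross-term contributing the $C_2 s_0^{3-\gamma-\frac2n}$ piece, handled as in the $m<0$ case). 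For $m=0$ the antiderivative bound gives $\log(nw_s+\delta)$, and using the elementary estimate $\log(a+\delta)\le\frac1\ep a^{\ep}+c$ for any $\ep>0$ (mentioned in the introduction) reduces this to the $m>0$ analysis with $m$ replaced by an arbitrarily small $\ep>0$, provided $\ep$ is small enough that $\alpha+\ell>\ep$ and $2-\frac2n\cdot\frac{\alpha+\ell}{\alpha+\ell-\ep}>\gamma$ — both possible since $\alpha+\ell>0$ and $\gamma<2-\frac2n$ — giving the second line of \eqref{I3ineq}.

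The main obstacle I anticipate is the bookkeeping in the integration by parts: one must be careful that the antiderivative $\int_0^{nw_s} (\eta+\delta)^{m-1}\,d\eta$ is taken in the correct variable and that the chain-rule factors of $n$ from $w_s\mapsto nw_s$ are tracked, and also that when $D$ is only bounded \emph{above} by $C_D(\xi+\delta)^{m-1}$ one still gets a valid lower bound for $I_3$ after the sign flips caused by $w_{ss}\le 0$ and by the (possibly negative) derivative of the weight $s^{2-\frac2n-\gamma}(s_0-s)$. Concretely, the weight's derivative changes sign at $s=\frac{(2-\frac2n-\gamma)s_0}{3-\frac2n-\gamma}\in(0,s_0)$, so one should bound it crudely by $C(s^{1-\frac2n-\gamma}s_0 + s^{2-\frac2n-\gamma})\le C s_0 s^{1-\frac2n-\gamma}$ on $(0,s_0)$ to keep a single clean power of $s$; this is the only delicate point, and after it the Hölder step and the elementary power-of-$s_0$ integrals are routine. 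I would also double-check that the constants' claimed dependencies ($C_1$ on $m,\alpha,\ell,\delta,\gamma,C_D$ and $C_2$ on $m,\delta,\gamma,C_D$) are consistent with this derivation, which they are since the Hölder exponents depend only on $m,\alpha,\ell$ and the residual $s$-integrals depend only on $\gamma$ (and $R$ through $s_0\le R^n$, but that is absorbed into the explicit $s_0$ powers).
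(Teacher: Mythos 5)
Your overall plan is the same as the paper's: integrate by parts to move the $s$-derivative off $w_{ss}$, use $w_{ss}\le0$ together with $D(\xi)\le C_D(\xi+\delta)^{m-1}$ (the sign flip you correctly flag) to get a pointwise bound, control the antiderivative $F(s):=\int_0^{nw_s}(\xi+\delta)^{m-1}\,d\xi$ by $\frac1m(nw_s+\delta)^m$, $\log$, or $-\frac1m\delta^m$ according to the sign of $m$, and then run H\"older against $\psi$ with exponents $\frac{\alpha+\ell}{m}$, $\frac{\alpha+\ell}{\alpha+\ell-m}$ (with $m$ replaced by a small $\ep$ when $m=0$, using $\log x\le\frac1\ep x^\ep+c$). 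This all matches the paper exactly, as do the conditions used at each step.

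However, your proposed resolution of the ``delicate point'' — bounding the absolute value of $\frac{d}{ds}\bigl[s^{2-\frac2n-\gamma}(s_0-s)\bigr]$ crudely by $Cs_0\,s^{1-\frac2n-\gamma}$ — throws away the factor $(s_0-s)$, and this is not harmless. Once that factor is gone, the H\"older step pairs $[s^{1-\gamma}(s_0-s)w_s^{\alpha+\ell}]^{m/(\alpha+\ell)}$ against a residual containing $(s_0-s)^{-m/(\alpha+\ell)}$, and after raising to the conjugate exponent $\frac{\alpha+\ell}{\alpha+\ell-m}$ you are left with $\int_0^{s_0}(s_0-s)^{-m/(\alpha+\ell-m)}\,ds$, which converges near $s_0$ only if $m<\alpha+\ell-m$, i.e. $\alpha+\ell>2m$. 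The lemma only assumes $\alpha+\ell>m$, so your argument breaks on the whole range $\frac{\alpha+\ell}{2}\le m<\alpha+\ell$ (which is compatible with the hypotheses of Lemma~\ref{lemodi} where this lemma is later applied). The fix is simpler than the crude bound: writing $g(s)=s^{2-\frac2n-\gamma}(s_0-s)$ one has
\begin{equation*}
-\int_0^{s_0}g'(s)F(s)\,ds
=-(2-\tfrac2n-\gamma)\int_0^{s_0}s^{1-\frac2n-\gamma}(s_0-s)F(s)\,ds
+\int_0^{s_0}s^{2-\frac2n-\gamma}F(s)\,ds,
\end{equation*}
and since $F\ge0$ the second integral is nonnegative and may simply be dropped, leaving
\begin{equation*}
I_3\ge -nC_D\Bigl(2-\tfrac2n-\gamma\Bigr)\int_0^{s_0}s^{1-\frac2n-\gamma}(s_0-s)F(s)\,ds,
\end{equation*}
which keeps the $(s_0-s)$ factor intact; the subsequent split $(nw_s+\delta)^m\le2^m\bigl((nw_s)^m+\delta^m\bigr)$ and H\"older then close under just $\alpha+\ell>m$ and $\gamma<2-\frac2n\cdot\frac{\alpha+\ell}{\alpha+\ell-m}$, as in the paper.
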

%
%
\begin{remark}
In this lemma, the constants $C_1>0$ and $C_2>0$ are depend on $\delta$.
However, in the case $m>0$, we can take them which are independent of $\delta$. 
\end{remark}
\begin{proof}
We have from \eqref{DSineq2} that
  \begin{align*}
    I_3 
    &\ge n^2 C_D \int^{s_0}_{0} s^{2-\frac{2}{n}-\gamma}(s_0-s)
            (nw_s+\delta)^{m-1}w_{ss}\,ds
    \\ \notag
    &=  n C_D \int^{s_0}_{0} s^{2-\frac{2}{n}-\gamma}(s_0-s)
          \frac{d}{ds}\left\{\int^{nw_s}_{0}(\xi+\delta)^{m-1}\,d\xi\right\}\,ds
  \end{align*}
for all $t \in S_\phi$.
Since it follows that
  \begin{align*}
    \int^{nw_s}_{0}(\xi+\delta)^{m-1}\,d\xi
    \le
    \begin{cases}
      \dfrac{1}{m}(nw_s+\delta)^m \quad&\mbox{if}\ m>0,
    \\[2mm] \notag
      \log(nw_s+\delta) - \log \delta \quad&\mbox{if}\ m=0,
    \\[2mm] \notag
      -\dfrac{1}{m}\delta^m \quad&\mbox{if}\ m<0,
    \end{cases}
  \end{align*}
we obtain from integrating by parts that
  \begin{align}\label{I3estimate}
    I_3 \ge
    \begin{cases}
      - \dfrac{n}{m} C_D \left(2-\dfrac{2}{n}-\gamma\right) 
         \displaystyle \int^{s_0}_{0} s^{1-\frac{2}{n}-\gamma}(s_0-s)(nw_s+\delta)^m\,ds
         \quad&\mbox{if}\ m>0,
    \\[4mm] 
      - n C_D \left(2-\dfrac{2}{n}-\gamma\right) 
         \displaystyle \int^{s_0}_{0} s^{1-\frac{2}{n}-\gamma}(s_0-s)\log \left(\dfrac{nw_s}{\delta}+1\right)\,ds
         \quad&\mbox{if}\ m=0,
    \\[4mm] 
      \dfrac{n}{m} \delta^m C_D \left(2-\dfrac{2}{n}-\gamma\right) 
      \displaystyle \int^{s_0}_{0} s^{1-\frac{2}{n}-\gamma}(s_0-s)\,ds
      \quad&\mbox{if}\ m<0
    \end{cases}
  \end{align}
for all $t \in S_\phi$.
First, we show the estimate \eqref{I3ineq} in the case $m>0$.
By applying the inequality $(nw_s+\delta)^m \le 2^m((nw_s)^m + \delta^m)$,
we know that
  \begin{align}\label{J1J2}
    \int^{s_0}_{0} s^{1-\frac{2}{n}-\gamma}(s_0-s)(nw_s+\delta)^m\,ds
    &\le
      2^mn^m \int^{s_0}_{0} s^{1-\frac{2}{n}-\gamma}(s_0-s)w_s^m\,ds
    \\ \notag
    &\quad\,
      + 2^m\delta^m \int^{s_0}_{0} s^{1-\frac{2}{n}-\gamma}(s_0-s)\,ds 
    \\ \notag
    &=:J_1 + J_2 
  \end{align}
for all $t \in S_\phi$. 
Invoking from \eqref{mcondi1} that $\frac{m}{\alpha+\ell}<1$, 
we see from H\"{o}lder's inequality that
  \begin{align*}
    J_1 &= 2^mn^m \int^{s_0}_{0} 
            \left[s^{1-\gamma}(s_0-s)w_s^{\alpha+\ell}\right]^\frac{m}{\alpha+\ell}
            \cdot
            s^{(1-\gamma)\frac{\alpha+\ell-m}{\alpha+\ell}-\frac{2}{n}}
            (s_0-s)^\frac{\alpha+\ell-m}{\alpha+\ell}\,ds
    \\ \notag
    &\le 2^m n^m \psi^\frac{m}{\alpha+\ell}(t)
           \cdot
           \left(\int^{s_0}_{0} 
             s^{1-\gamma-\frac{2}{n}\cdot\frac{\alpha+\ell}{\alpha+\ell-m}}(s_0-s)\,ds
           \right)^\frac{\alpha+\ell-m}{\alpha+\ell}
  \end{align*}
for all $t \in S_\phi$. 
Moreover, thanks to the condition $2-\frac{2}{n}\cdot\frac{\alpha+\ell}{\alpha+\ell-m} > \gamma$, we can observe
  \[
    \int^{s_0}_{0} s^{1-\gamma-\frac{2}{n}\cdot\frac{\alpha+\ell}{\alpha+\ell-m}}(s_0-s)\,ds
    =c_1 s_0^{3-\gamma-\frac{2}{n}\cdot\frac{\alpha+\ell}{\alpha+\ell-m}},
  \]
where 
  \[
    c_1:=\frac{1}{\left(2-\gamma-\frac{2}{n}\cdot\frac{\alpha+\ell}{\alpha+\ell-m}\right) \left(3-\gamma-\frac{2}{n}\cdot\frac{\alpha+\ell}{\alpha+\ell-m}\right)}>0.
  \]
Thus we establish that
  \begin{align}\label{J1esti}
    J_1 \le 2^m n^m c_1^\frac{\alpha+\ell-m}{\alpha+\ell}
               s_0^{(3-\gamma)\frac{\alpha+\ell-m}{\alpha+\ell}-\frac{2}{n}} 
               \psi^\frac{m}{\alpha+\ell}(t) 
  \end{align}
for all $t \in S_\phi$.
Also, since 
$2-\gamma-\frac{2}{n} > 2-\gamma-\frac{2}{n}\cdot\frac{\alpha+\ell}{\alpha+\ell-m}>0$ 
and $\delta\le1$,
it follows that
  \begin{align}\label{J2esti}
    J_2 =
    \frac{2^m \delta^m}{\left(2-\gamma-\frac{2}{n}\right) \left(3-\gamma-\frac{2}{n}\right)}
    s_0^{2-\gamma-\frac{2}{n}}
    \le 
    \frac{2^m}{\left(2-\gamma-\frac{2}{n}\right) \left(3-\gamma-\frac{2}{n}\right)}
    s_0^{2-\gamma-\frac{2}{n}}.
  \end{align}
In the case $m>0$, from \eqref{I3estimate}--\eqref{J2esti} we can deduce that
  \[
    I_3 \ge - \frac{2^m n^{m+1} C_D}{m} \left(2-\frac{2}{n}-\gamma\right)
                 c_1^\frac{\alpha+\ell-m}{\alpha+\ell}
                 s_0^{(3-\gamma)\frac{\alpha+\ell-m}{\alpha+\ell}-\frac{2}{n}} 
                 \psi^\frac{m}{\alpha+\ell}(t) 
               - \frac{2^m n C_D}{m \left(3-\gamma-\frac{2}{n}\right)}
                  s_0^{2-\gamma-\frac{2}{n}} 
  \]
for all $t \in S_\phi$, which implies \eqref{I3ineq}.
Next, we confirm that the estimate \eqref{I3ineq} holds in the case $m=0$.
Due to \eqref{mcondi1} with $m=0$, we can take $\ep>0$ small enough such that
  \[
    \alpha+\ell > \ep 
    \quad\mbox{and}\quad
     2-\frac{2}{n}\cdot\frac{\alpha+\ell}{\alpha+\ell-\ep} > \gamma.
  \]
Furthermore, we have that
  \[
    \log \left(\frac{nw_s}{\delta}+1\right) 
    \le \frac{1}{\ep}\left(\frac{nw_s}{\delta}+1\right)^\ep - \frac{1}{\ep}
    = \frac{1}{\ep \delta^\ep}(nw_s+\delta)^\ep - \frac{1}{\ep}.
  \]
In light of \eqref{I3estimate}, we obtain that
  \begin{align}\label{I3estim=01}
    I_3 &\ge 
    - \frac{n C_D}{\ep \delta^\ep} \left(2-\frac{2}{n}-\gamma\right) 
       \int^{s_0}_{0} s^{1-\frac{2}{n}-\gamma}(s_0-s)(nw_s+\delta)^\ep\,ds
    \\ \notag
    &\quad\,
    -  \frac{n C_D}{\ep} \left(2-\frac{2}{n}-\gamma\right) 
      \int^{s_0}_{0} s^{1-\frac{2}{n}-\gamma}(s_0-s)\,ds
  \end{align}
for all $t \in S_\phi$.
As in the case $m>0$, we can verify that
  \begin{align}\label{I3estim=02}
    &- \frac{n C_D}{\ep \delta^\ep} \left(2-\frac{2}{n}-\gamma\right) 
       \int^{s_0}_{0} s^{1-\frac{2}{n}-\gamma}(s_0-s)(nw_s+\delta)^\ep\,ds
    \\ \notag
    &\ge
      - \frac{2^\ep n^{\ep+1} C_D}{\ep \delta^\ep} \left(2-\frac{2}{n}-\gamma\right)
         c_2^\frac{\alpha+\ell-\ep}{\alpha+\ell}
         s_0^{(3-\gamma)\frac{\alpha+\ell-\ep}{\alpha+\ell}-\frac{2}{n}} 
         \psi^\frac{\ep}{\alpha+\ell}(t) 
      - \frac{2^\ep n C_D}{\ep \left(3-\gamma-\frac{2}{n}\right)}
         s_0^{2-\gamma-\frac{2}{n}} 
  \end{align}
for all $t \in S_\phi$, where 
  \[
    c_2:=\frac{1}{\left(2-\gamma-\frac{2}{n}\cdot\frac{\alpha+\ell}{\alpha+\ell-\ep}\right) \left(3-\gamma-\frac{2}{n}\cdot\frac{\alpha+\ell}{\alpha+\ell-\ep}\right)}>0.
  \]
On the other hand, we see that
  \begin{align}\label{I3estim=03}
    -  \frac{n C_D}{\ep} \left(2-\frac{2}{n}-\gamma\right) 
      \int^{s_0}_{0} s^{1-\frac{2}{n}-\gamma}(s_0-s)\,ds
    = - \frac{n C_D}{\ep \left(3-\gamma-\frac{2}{n}\right)}
       s_0^{3-\gamma-\frac{2}{n}}.
  \end{align}
Accordingly, a combination of \eqref{I3estim=01}--\eqref{I3estim=03} yields \eqref{I3ineq}.
Finally, in the case $m<0$, we can show from \eqref{I3estimate} that
  \[
    \frac{n}{m} \delta^m C_D \left(2-\frac{2}{n}-\gamma\right) 
    \int^{s_0}_{0} s^{1-\frac{2}{n}-\gamma}(s_0-s)\,ds
    = \frac{n \delta^m C_D}{m \left(3-\gamma-\frac{2}{n}\right)} 
       s_0^{3-\gamma-\frac{2}{n}},
  \]
which concludes the proof.
\end{proof}

In the following lemma we derive the estimate for $I_4$.
%
%
\begin{lem}\label{lemI4}
Assume that $u_0$ satisfies \eqref{u0}. Let $s_0 \in \left(0,\frac{R^n}{4}\right]$.
Suppose that $\alpha>0$, $\kappa>1$, $\ell>0$ and $\gamma \in (-\infty,1)$ fulfill
  \begin{align}\label{alkcondi}
    \alpha+\ell>\kappa
    \quad\mbox{and}\quad
    2-\frac{\alpha+\ell}{\kappa}<\gamma<1.
  \end{align} 
Then there exists $C_1=C_1(\alpha,\mu,\kappa,\ell,\gamma)>0$ such that
  \begin{align}\label{I4ineq}
    I_4 \ge - C_1s_0^{(3-\gamma)\frac{\alpha+\ell-\kappa}{\alpha+\ell}}
    \psi^\frac{\kappa}{\alpha+\ell}(t)
  \end{align}
for all $t \in S_\phi$.
\end{lem}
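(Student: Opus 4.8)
We must bound from below
\[
  I_4 = - n^{\kappa-1}\mu \int^{s_0}_{0} s^{-\gamma}(s_0-s)\left\{\int^{s}_{0} w_s^\kappa(\sigma,t)\,d\sigma\right\}ds
\]
by a negative multiple of $s_0^{(3-\gamma)\frac{\alpha+\ell-\kappa}{\alpha+\ell}}\psi^{\kappa/(\alpha+\ell)}(t)$, using only the concavity $w_{ss}\le 0$ (hence $w_s(\sigma,t)\ge w_s(s,t)$ for $\sigma\le s$, Lemma \ref{con_w}) and the structure of $\psi$.

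\emph{Plan.} The estimate on $I_4$ should mirror the Hölder-interpolation argument already used for $J_1$ in the proof of the $I_3$-estimate, but now applied to the exponent $\kappa$ rather than $m$ or $\ep$. First I would pointwise dominate the inner integral $\int_0^s w_s^\kappa(\sigma,t)\,d\sigma$. We cannot use monotonicity of $w_s$ in $\sigma$ in a helpful direction here (that gives a lower bound on the inner integral, the wrong way for a lower bound on $I_4$), so instead the inner integral is simply kept as is and the $s$-integral is estimated after swapping order or, more simply, one writes
\[
  \int^{s_0}_{0} s^{-\gamma}(s_0-s)\left\{\int^{s}_{0} w_s^\kappa(\sigma,t)\,d\sigma\right\}ds
  \le s_0\int^{s_0}_{0} s^{-\gamma}\left\{\int^{s_0}_{0}\chi_{\{\sigma\le s\}} w_s^\kappa(\sigma,t)\,d\sigma\right\}ds,
\]
and by Fubini this is at most $\frac{s_0}{1-\gamma}\int_0^{s_0} s^{1-\gamma}w_s^\kappa(\sigma,t)\,d\sigma$ after relabelling — i.e. one reduces, up to constants and a power of $s_0$, to controlling $\int_0^{s_0} \sigma^{1-\gamma}w_s^{\kappa}\,d\sigma$ against $\psi(t)=\int_0^{s_0} s^{1-\gamma}(s_0-s)w_s^{\alpha+\ell}\,ds$. (Here I am being slightly cavalier about the $(s_0-s)$ weight; the honest version keeps the weights $s^{1-\gamma}(s_0-s)$ throughout, which is why the condition $\gamma<1$ appears.)

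\emph{The interpolation step.} Write $\kappa = (\alpha+\ell)\cdot\frac{\kappa}{\alpha+\ell}$ with $\frac{\kappa}{\alpha+\ell}\in(0,1)$ by the first part of \eqref{alkcondi}. Then, exactly as in the $J_1$ computation,
\[
  \int^{s_0}_{0} s^{1-\gamma}(s_0-s)w_s^{\kappa}\,ds
  = \int^{s_0}_{0}\Bigl[s^{1-\gamma}(s_0-s)w_s^{\alpha+\ell}\Bigr]^{\frac{\kappa}{\alpha+\ell}}
      \cdot s^{(1-\gamma)\frac{\alpha+\ell-\kappa}{\alpha+\ell}}(s_0-s)^{\frac{\alpha+\ell-\kappa}{\alpha+\ell}}\,ds,
\]
and Hölder's inequality with exponents $\frac{\alpha+\ell}{\kappa}$ and $\frac{\alpha+\ell}{\alpha+\ell-\kappa}$ gives
\[
  \le \psi^{\frac{\kappa}{\alpha+\ell}}(t)\cdot
      \left(\int^{s_0}_{0} s^{1-\gamma}(s_0-s)\,ds\right)^{\frac{\alpha+\ell-\kappa}{\alpha+\ell}}.
\]
The remaining integral equals a constant times $s_0^{3-\gamma}$ precisely because $\gamma<1$ (this is where the upper bound $\gamma<1$ in \eqref{alkcondi} is used), so the bracket contributes $c\,s_0^{(3-\gamma)\frac{\alpha+\ell-\kappa}{\alpha+\ell}}$. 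Collecting the constants $n^{\kappa-1}\mu$, the combinatorial factor from the $\int_0^s$, and this last factor into a single $C_1=C_1(\alpha,\mu,\kappa,\ell,\gamma)>0$ yields \eqref{I4ineq}.

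\emph{Main obstacle.} The only delicate point is the double integral $\int_0^{s_0} s^{-\gamma}(s_0-s)\{\int_0^s w_s^\kappa\}ds$: one must reorganize it (integration by parts in $s$, or Fubini) so that the outcome carries the weight $s^{1-\gamma}(s_0-s)$ that matches $\psi$, rather than $s^{-\gamma}(s_0-s)$ or $s^{1-\gamma}$ alone; getting the exact power of $s_0$ right and checking that $2-\frac{\alpha+\ell}{\kappa}<\gamma$ is what makes the exponent $(3-\gamma)\frac{\alpha+\ell-\kappa}{\alpha+\ell}$ come out as stated (so that the $\psi^{\kappa/(\alpha+\ell)}$ term is sublinear in $\psi$ compared with the $C_1\psi^{\alpha+\ell}$ one eventually wants). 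After that, everything is the same Hölder manoeuvre already rehearsed for $I_3$, so I expect no further difficulty.
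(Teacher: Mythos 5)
Your high-level plan is the right one -- swap the order of integration by Fubini and then use H\"older against the $\psi$-integrand -- but there is a concrete gap in the middle, and you have misdiagnosed where the hypothesis $\gamma>2-\frac{\alpha+\ell}{\kappa}$ is used.

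After Fubini and the elementary bound $\int_\sigma^{s_0}s^{-\gamma}(s_0-s)\,ds\le\frac{1}{1-\gamma}s_0^{1-\gamma}(s_0-\sigma)$, one has
\[
  I_4 \ge -\frac{n^{\kappa-1}\mu}{1-\gamma}\,s_0^{1-\gamma}\int_0^{s_0}(s_0-s)\,w_s^\kappa\,ds,
\]
so the quantity you must estimate is $\int_0^{s_0}(s_0-s)\,w_s^\kappa\,ds$, carrying the weight $(s_0-s)$ \emph{only}, with an external prefactor $s_0^{1-\gamma}$. Your H\"older step is instead applied to $\int_0^{s_0}s^{1-\gamma}(s_0-s)\,w_s^\kappa\,ds$, which (since $\gamma<1$ and $s\le s_0$) is a \emph{lower} bound for $s_0^{1-\gamma}\int_0^{s_0}(s_0-s)\,w_s^\kappa\,ds$, not an upper bound; the substitution goes the wrong way. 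The correct decomposition must absorb the full $s^{1-\gamma}$ coming from the $\psi$-integrand into the second H\"older factor as a \emph{negative} power of $s$:
\[
  \int_0^{s_0}(s_0-s)\,w_s^\kappa\,ds
  =\int_0^{s_0}\bigl[s^{1-\gamma}(s_0-s)w_s^{\alpha+\ell}\bigr]^{\frac{\kappa}{\alpha+\ell}}\,
     s^{-(1-\gamma)\frac{\kappa}{\alpha+\ell}}(s_0-s)^{\frac{\alpha+\ell-\kappa}{\alpha+\ell}}\,ds
  \le\psi^{\frac{\kappa}{\alpha+\ell}}\Bigl(\int_0^{s_0}s^{-(1-\gamma)\frac{\kappa}{\alpha+\ell-\kappa}}(s_0-s)\,ds\Bigr)^{\frac{\alpha+\ell-\kappa}{\alpha+\ell}}.
\]
The complementary integral is now singular at $s=0$; its convergence requires $(1-\gamma)\frac{\kappa}{\alpha+\ell-\kappa}<1$, i.e.\ $\gamma>2-\frac{\alpha+\ell}{\kappa}$. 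This is the genuine role of the lower restriction on $\gamma$; it has nothing to do with making the exponent of $s_0$ come out right (your version with weight $s^{1-\gamma}(s_0-s)$ produces the correct exponent without any condition on $\gamma$, which is a sign that condition was not used) nor with the sublinearity $\kappa/(\alpha+\ell)<1$, which is just $\alpha+\ell>\kappa$. In short: keep only the weight $(s_0-s)$ after Fubini, put the $s^{1-\gamma}$ into the complementary H\"older factor with a negative exponent, and use $\gamma>2-\frac{\alpha+\ell}{\kappa}$ to make that singular integral finite; then the powers of $s_0$ combine to $(3-\gamma)\frac{\alpha+\ell-\kappa}{\alpha+\ell}$ as claimed.
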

\begin{proof}
We apply the Fubini theorem to obtain that
  \begin{align*}
       \int^{s_0}_{0} s^{-\gamma}(s_0-s)
       \left\{ \int^{s}_{0} w_s^\kappa(\sigma,t)\,d\sigma \right\}\,ds
    &= \int^{s_0}_{0} 
       \left\{ \int^{s_0}_{\sigma} s^{-\gamma}(s_0-s) \,ds \right\}
       w_s^\kappa(\sigma,t)\,d\sigma
    \\ \notag
    &\le \frac{1}{1-\gamma} s_0^{1-\gamma}
           \int^{s_0}_{0} (s_0-\sigma)w_s^\kappa(\sigma,t)\,d\sigma
  \end{align*}
for all $t \in S_\phi$.
Thus we have that
  \begin{align}\label{I4esti1}
    I_4 \ge - \frac{n^{\kappa-1}\mu}{1-\gamma} s_0^{1-\gamma}
                 \int^{s_0}_{0} (s_0-s)w_s^\kappa\,ds
  \end{align}
for all $t \in S_\phi$. 
Owing to the first condition of \eqref{alkcondi}, 
we see from H\"{o}lder's inequality that
  \begin{align}\label{I4esti2}
    \int^{s_0}_{0} (s_0-s)w_s^\kappa\,ds
    &= \int^{s_0}_{0} 
       \left[s^{1-\gamma}(s_0-s)w_s^{\alpha+\ell}\right]^\frac{\kappa}{\alpha+\ell}
       \cdot
       s^{-(1-\gamma)\frac{\kappa}{\alpha+\ell}}
       (s_0-s)^\frac{\alpha+\ell-\kappa}{\alpha+\ell}\,ds
    \\ \notag
    &\le \psi^\frac{\kappa}{\alpha+\ell}(t)
           \cdot
           \left(\int^{s_0}_{0} s^{-(1-\gamma)\frac{\kappa}{\alpha+\ell-\kappa}}
                  (s_0-s)\,ds\right)^\frac{\alpha+\ell-\kappa}{\alpha+\ell}
  \end{align}
for all $t \in S_\phi$. 
Here, noting from the second condition of \eqref{alkcondi} that
  \[
    1-(1-\gamma)\frac{\kappa}{\alpha+\ell-\kappa}
    >1-\left(\frac{\alpha+\ell}{\kappa}-1\right)\frac{\kappa}{\alpha+\ell-\kappa}
    =0,
  \]
we can verify that
  \begin{align}\label{I4esti3}
    \int^{s_0}_{0} s^{-(1-\gamma)\frac{\kappa}{\alpha+\ell-\kappa}}(s_0-s)\,ds
    =c_1 s_0^{2-(1-\gamma)\frac{\kappa}{\alpha+\ell-\kappa}},
  \end{align}
where 
  \[
    c_1:=\frac{1}{\left(1-(1-\gamma)\frac{\kappa}{\alpha+\ell-\kappa}\right)
                      \left(2-(1-\gamma)\frac{\kappa}{\alpha+\ell-\kappa}\right)}>0.
  \]
Thanks to \eqref{I4esti1}--\eqref{I4esti3}, it follows that
  \begin{align*}
    I_4 \ge - \frac{n^{\kappa-1}\mu}{1-\gamma}c_1^\frac{\alpha+\ell-\kappa}{\alpha+\ell}
                 s_0^{1-\gamma+\frac{2(\alpha+\ell-\kappa)}
                                         {\alpha+\ell}-(1-\gamma)\frac{\kappa}{\alpha+\ell}}
                 \psi^\frac{\kappa}{\alpha+\ell}(t)
         = - \frac{n^{\kappa-1}\mu}{1-\gamma}c_1^\frac{\alpha+\ell-\kappa}{\alpha+\ell}
              s_0^{(3-\gamma)\frac{\alpha+\ell-\kappa}{\alpha+\ell}}
              \psi^\frac{\kappa}{\alpha+\ell}(t) 
  \end{align*}
for all $t \in S_\phi$, which implies \eqref{I4ineq}.
\end{proof}

In the next lemma we establish the estimate for $w$ which is used later.
%
%
\begin{lem}\label{wpsi}
Assume that $u_0$ satisfies \eqref{u0}. Let $s_0 \in \left(0,\frac{R^n}{4}\right]$.
Suppose that $\alpha>0$, $\ell>0$ and $\gamma \in (-\infty,1)$ fulfill
  \begin{align}\label{wcondi}
    \alpha+\ell>1
    \quad\mbox{and}\quad
    2-(\alpha+\ell)<\gamma<1.
  \end{align} 
Then there exists $C_1=C_1(\alpha,\ell,\gamma)>0$ such that
  \[
    w(s,t) \le C_1 s^\frac{\alpha+\ell+\gamma-2}{\alpha+\ell}(s_0-s)^{-\frac{1}{\alpha+\ell}}
                  \psi^\frac{1}{\alpha+\ell}(t)
  \]
for all $s \in (0,s_0)$ and $t \in S_\phi$.
\end{lem}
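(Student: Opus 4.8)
The plan is to derive the pointwise bound on $w(s,t)$ directly from the monotonicity of $w_s$ (Lemma \ref{con_w}) together with the definition of $\psi$. First I would observe that, since $w_{ss}(\cdot,t)\le0$ by Lemma \ref{con_w}, the function $w_s(\cdot,t)$ is nonincreasing on $(0,s_0)$, so for any fixed $s\in(0,s_0)$ and any $\sigma\in(0,s)$ we have $w_s(\sigma,t)\ge w_s(s,t)$. Integrating in $\sigma$ from $0$ to $s$ and using $w(0,t)=0$ gives $w(s,t)=\int_0^s w_s(\sigma,t)\,d\sigma \ge s\,w_s(s,t)$, which is exactly the same concavity estimate used throughout \cite{W-2018_nonlinear}. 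This yields $w_s(s,t)\le \frac{w(s,t)}{s}$; but I actually want to bound $w$ from above by $\psi$, so the useful direction is to exploit that $w_s$ is large near $0$.

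Next I would turn the bound around: since $w_s(\sigma,t)\ge w_s(s,t)$ for $\sigma\le s$, for any $\sigma\in\bigl(\tfrac{s}{2},s\bigr)$ we still have $w_s(\sigma,t)\ge w_s(s,t)$, hence
\[
  w(s,t)\ge\int_{s/2}^{s}w_s(\sigma,t)\,d\sigma\ge \tfrac{s}{2}\,w_s(s,t),
\]
which again controls $w_s(s,t)$ by $w(s,t)/s$. The right move is instead to integrate the $\psi$-integrand against $w(s,t)$: for $\sigma\in(s,s_0)$ one has $w_s(\sigma,t)\le w_s(s,t)\le \frac{2w(s,t)}{s}$ by the same concavity argument applied at the point $s$. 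Therefore, raising to the power $\alpha+\ell$ and integrating,
\[
  \psi(t)=\int_0^{s_0}\sigma^{1-\gamma}(s_0-\sigma)w_s^{\alpha+\ell}(\sigma,t)\,d\sigma
  \ge \int_s^{s_0}\sigma^{1-\gamma}(s_0-\sigma)\,d\sigma\cdot\Bigl(\text{lower bound on }w_s^{\alpha+\ell}\text{ on }(0,s)\Bigr),
\]
and on $(0,s)$ we have $w_s(\sigma,t)\ge w_s(s,t)$. Combining $w_s(s,t)\ge$ (something) with $w(s,t)\le s\,w_s(s^-,t)$ is circular, so the clean route is: use $w(s,t)\le \int_0^s w_s(\sigma,t)\,d\sigma$ and Hölder's inequality with exponents $\alpha+\ell$ and $\frac{\alpha+\ell}{\alpha+\ell-1}$ against the weight $\sigma^{1-\gamma}(s_0-\sigma)$, exactly mirroring the estimate for $J_1$ in the preceding lemma. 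This gives
\[
  w(s,t)=\int_0^s w_s(\sigma,t)\,d\sigma
  \le\Bigl(\int_0^s \sigma^{1-\gamma}(s_0-\sigma)w_s^{\alpha+\ell}(\sigma,t)\,d\sigma\Bigr)^{\!\frac{1}{\alpha+\ell}}
     \Bigl(\int_0^s \sigma^{-\frac{1-\gamma}{\alpha+\ell-1}}(s_0-\sigma)^{-\frac{1}{\alpha+\ell-1}}\,d\sigma\Bigr)^{\!\frac{\alpha+\ell-1}{\alpha+\ell}}.
\]
The first factor is bounded by $\psi^{1/(\alpha+\ell)}(t)$. For the second factor, the integrability near $0$ requires $\frac{1-\gamma}{\alpha+\ell-1}<1$, i.e. $\gamma>2-(\alpha+\ell)$, which is precisely the second hypothesis in \eqref{wcondi}; the factor $(s_0-\sigma)^{-1/(\alpha+\ell-1)}$ is bounded on $(0,s)$ by $(s_0-s)^{-1/(\alpha+\ell-1)}$ since $s_0-\sigma\ge s_0-s$ there. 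Evaluating the remaining elementary integral $\int_0^s\sigma^{-\frac{1-\gamma}{\alpha+\ell-1}}\,d\sigma = c\,s^{1-\frac{1-\gamma}{\alpha+\ell-1}}$ and collecting exponents produces the stated power $s^{\frac{\alpha+\ell+\gamma-2}{\alpha+\ell}}(s_0-s)^{-\frac{1}{\alpha+\ell}}$, with a constant $C_1$ depending only on $\alpha,\ell,\gamma$.

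The only subtlety, and the point I would be most careful about, is bookkeeping the exponents: one must check that $\bigl(\alpha+\ell-1\bigr)\cdot\frac{\alpha+\ell-1}{\alpha+\ell}\cdot\bigl(-\frac{1}{\alpha+\ell-1}\bigr)$ for the $(s_0-s)$-factor and $\frac{\alpha+\ell-1}{\alpha+\ell}\bigl(1-\frac{1-\gamma}{\alpha+\ell-1}\bigr)$ for the $s$-factor indeed sum correctly with the trivial factor from the first Hölder term to give $-\frac{1}{\alpha+\ell}$ and $\frac{\alpha+\ell+\gamma-2}{\alpha+\ell}$ respectively; a direct computation confirms this. The hypothesis $\alpha+\ell>1$ is what makes the Hölder conjugate exponent $\frac{\alpha+\ell}{\alpha+\ell-1}$ finite and positive, and Lemma \ref{con_w} is not strictly needed for this particular estimate — only the nonnegativity of $w_s$ and $w(0,t)=0$ — but it is available and could alternatively be used to replace the Hölder step by the cruder concavity bound $w(s,t)\le s\,\lim_{\sigma\to0^+}w_s(\sigma,t)$ if one preferred, at the cost of a worse power. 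I would present the Hölder version since it is what later arguments apparently need.
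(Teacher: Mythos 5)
Your argument, once the dead ends about concavity are discarded, is exactly the paper's proof: write $w(s,t)=\int_0^s[\sigma^{1-\gamma}(s_0-\sigma)]^{1/(\alpha+\ell)}w_s\cdot[\sigma^{1-\gamma}(s_0-\sigma)]^{-1/(\alpha+\ell)}\,d\sigma$, apply H\"older with exponents $\alpha+\ell$ and $\tfrac{\alpha+\ell}{\alpha+\ell-1}$, bound the first factor by $\psi^{1/(\alpha+\ell)}$, pull $(s_0-\sigma)^{-1/(\alpha+\ell-1)}\le(s_0-s)^{-1/(\alpha+\ell-1)}$ out of the remaining integral, and evaluate $\int_0^s\sigma^{-(1-\gamma)/(\alpha+\ell-1)}\,d\sigma$ using $\gamma>2-(\alpha+\ell)$. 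The only slip is a typo in your bookkeeping aside, where the $(s_0-s)$-exponent should be $\bigl(-\tfrac{1}{\alpha+\ell-1}\bigr)\cdot\tfrac{\alpha+\ell-1}{\alpha+\ell}=-\tfrac{1}{\alpha+\ell}$ with no leading factor $(\alpha+\ell-1)$; the stated final power is nonetheless correct.
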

\begin{proof}
According to the condition $\alpha+\ell>1$, we have from H\"{o}lder's inequality that
  \begin{align*}
    w(s,t) 
    &= \int^{s}_{0} w_s(\sigma,t)\,d\sigma
    \\ \notag
    &= \int^{s}_{0} [\sigma^{1-\gamma}(s_0-\sigma)]^\frac{1}{\alpha+\ell}w_s(\sigma,t)
         \cdot
         [\sigma^{1-\gamma}(s_0-\sigma)]^{-\frac{1}{\alpha+\ell}}\,d\sigma
    \\ \notag
    &\le \psi^\frac{1}{\alpha+\ell}(t)
           \cdot
           \left(\int^{s}_{0} \sigma^{-\frac{1-\gamma}{\alpha+\ell-1}}
                                  (s_0-\sigma)^{-\frac{1}{\alpha+\ell-1}}\,d\sigma 
           \right)^\frac{\alpha+\ell-1}{\alpha+\ell}
  \end{align*}
for all $s \in (0,s_0)$ and $t \in S_\phi$.
Moreover, thanks to the condition $2-(\alpha+\ell)<\gamma<1$, we see that
  \begin{align*}
    \int^{s}_{0} \sigma^{-\frac{1-\gamma}{\alpha+\ell-1}}
                   (s_0-\sigma)^{-\frac{1}{\alpha+\ell-1}}\,d\sigma 
    &\le (s_0-s)^{-\frac{1}{\alpha+\ell-1}}
         \int^{s}_{0} \sigma^{-\frac{1-\gamma}{\alpha+\ell-1}}\,d\sigma 
    \\ \notag
    &= \left(\frac{\alpha+\ell-1}{\alpha+\ell+\gamma-2}\right)
         s^\frac{\alpha+\ell+\gamma-2}{\alpha+\ell-1} (s_0-s)^{-\frac{1}{\alpha+\ell-1}}.
  \end{align*}
Thus we can obtain that
  \[
    w(s,t) \le
    \left(\frac{\alpha+\ell-1}{\alpha+\ell+\gamma-2}\right)^\frac{\alpha+\ell-1}{\alpha+\ell}
    s^\frac{\alpha+\ell+\gamma-2}{\alpha+\ell} (s_0-s)^{-\frac{1}{\alpha+\ell}}
    \psi^\frac{1}{\alpha+\ell}(t)
  \]
for all $s \in (0,s_0)$ and $t \in S_\phi$, which concludes the proof.
\end{proof}

From Lemma \ref{wpsi} we drive the estimate for $\psi$.
%
%
\begin{lem}\label{psiphi}
Assume that $u_0$ satisfies \eqref{u0}. Let $s_0 \in \left(0,\frac{R^n}{4}\right]$.
Suppose that $\alpha>0$, $\ell>0$ and $\gamma \in (-\infty,1)$ fulfill
  \[
    \alpha+\ell>1
    \quad\mbox{and}\quad
    2-(\alpha+\ell)<\gamma<1.
  \] 
Then there exists $C_1=C_1(\alpha,\ell,\gamma)>0$ such that
  \begin{align}\label{psiphiineq}
    \psi(t) \ge C_1 s_0^{-(3-\gamma)(\alpha+\ell-1)}\phi^{\alpha+\ell}(t)
  \end{align}
for all $t \in S_\phi$.
\end{lem}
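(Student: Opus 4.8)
The plan is to substitute the pointwise bound for $w$ furnished by Lemma \ref{wpsi} directly into the definition of $\phi$ and then simply read off the resulting power of $s_0$. Write $q:=\alpha+\ell$ for brevity. By Lemma \ref{wpsi} (whose hypotheses $\alpha+\ell>1$ and $2-(\alpha+\ell)<\gamma<1$ are exactly those assumed here) we have $w(s,t)\le C_1 s^{\frac{q+\gamma-2}{q}}(s_0-s)^{-\frac1q}\psi^{\frac1q}(t)$ for all $s\in(0,s_0)$ and $t\in S_\phi$, with $C_1=C_1(\alpha,\ell,\gamma)>0$. Inserting this into $\phi(t)=\int_0^{s_0}s^{-\gamma}(s_0-s)w(s,t)\,ds$ gives $\phi(t)\le C_1\,\psi^{\frac1q}(t)\int_0^{s_0}s^{a}(s_0-s)^{b}\,ds$, where $a:=1-\gamma+\frac{\gamma-2}{q}$ and $b:=1-\frac1q$.

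Next I would check that this integral is finite and extract the power of $s_0$. Since $\gamma<1$ one has $2-\gamma>1>0$, and since $q>1$ one has $1-\frac1q>0$; hence $a+1=(2-\gamma)\bigl(1-\frac1q\bigr)>0$ and $b+1\ge1>0$, so both exponents exceed $-1$ and the integral converges. The rescaling $s=s_0\sigma$ yields $\int_0^{s_0}s^{a}(s_0-s)^{b}\,ds=c(\alpha,\ell,\gamma)\,s_0^{a+b+1}$ for a constant $c>0$ (a Beta integral), and a short computation gives $a+b+1=3-\gamma+\frac{\gamma-3}{q}=(3-\gamma)\frac{q-1}{q}=(3-\gamma)\frac{\alpha+\ell-1}{\alpha+\ell}$. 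Therefore $\phi(t)\le C_1 c\, s_0^{(3-\gamma)\frac{\alpha+\ell-1}{\alpha+\ell}}\psi^{\frac{1}{\alpha+\ell}}(t)$ for all $t\in S_\phi$.

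Finally I would raise both sides to the power $\alpha+\ell$ and rearrange: this gives $\psi(t)\ge (C_1 c)^{-(\alpha+\ell)}\,s_0^{-(3-\gamma)(\alpha+\ell-1)}\,\phi^{\alpha+\ell}(t)$ for all $t\in S_\phi$, i.e. \eqref{psiphiineq} with $C_1:=(C_1 c)^{-(\alpha+\ell)}$, which depends only on $\alpha,\ell,\gamma$ as claimed. There is no genuine obstacle in this argument beyond careful bookkeeping of the exponents; the only point meriting an explicit line is the convergence of the Beta-type integral, which is precisely where $\gamma<1$ and $\alpha+\ell>1$ enter (the lower bound $2-(\alpha+\ell)<\gamma$ being needed only to legitimize the appeal to Lemma \ref{wpsi}).
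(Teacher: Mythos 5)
Your proof is correct and follows precisely the route the paper intends: the paper merely cites the analogous computation in Wang--Li, but its placement of Lemma~\ref{wpsi} immediately before (explicitly introduced as the tool for estimating $\psi$) makes clear that substituting the pointwise bound on $w$ into the definition of $\phi$, evaluating the resulting Beta integral, and raising to the power $\alpha+\ell$ is exactly the intended argument. Your exponent bookkeeping ($a+b+1=(3-\gamma)\frac{\alpha+\ell-1}{\alpha+\ell}$) and the convergence check ($a>-1$, $b>-1$ from $\gamma<1$ and $\alpha+\ell>1$) are both correct.
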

\begin{proof}
By an argument similar to that in the proof of \cite[Lemma 3.7]{Wang-Li}, 
we can show that \eqref{psiphiineq} holds.
\end{proof}
%
\subsection{ODIs for $\phi$. Proof of Theorem \ref{thm2}}\label{blowup}
In this subsection we will prove Theorem \ref{thm2}.
To this end, we first derive the ODIs for the moment-type functional $\phi$ in the following lemma.
%
%
\begin{lem}\label{lemodi}
Assume that $D$, $S$ and $f$ fulfill \eqref{DSineq2} and \eqref{fineq2}. 
Suppose that $m\in\mathbb{R}$, $\alpha>0$, $\kappa>1$ and $\ell>0$ satisfy that
  \begin{align}
    \label{malkcondi1}
    &\mbox{if}\ m\ge0, \quad\mbox{then}\quad \alpha+\ell>\max\left\{m+\frac{2}{n}\kappa,\kappa\right\},
  \\ \label{malkcondi2}
    &\mbox{if}\ m<0, \quad\mbox{then}\quad \alpha+\ell>\max\left\{\frac{2}{n}\kappa,\kappa\right\}.
  \end{align}
Then there exist $\ep>0$ small enough and one can find $\gamma=\gamma(m,\alpha,\kappa,\ell) \in (-\infty,1)$ and
$C=C(R,m,\alpha,\mu,\kappa,\ell,L,\delta,\gamma,C_D,C_S)>0$ such that
if $u_0$ satisfies \eqref{u0} and $s_0 \in \left(0,\frac{R^n}{4}\right]$, then
  \begin{align}\label{odiphi}
    \phi'(t) \ge 
    \begin{cases}
      \dfrac{1}{C}s_0^{-(3-\gamma)(\alpha+\ell-1)}\phi^{\alpha+\ell}(t) 
      -Cs_0^{3-\gamma-\frac{2}{n}\cdot\frac{\alpha+\ell}{\alpha+\ell-m}}
      \quad&\mbox{if}\ m>0,
    \\[4mm]
      \dfrac{1}{C}s_0^{-(3-\gamma)(\alpha+\ell-1)}\phi^{\alpha+\ell}(t) 
      -Cs_0^{3-\gamma-\frac{2}{n}\cdot\frac{\alpha+\ell}{\alpha+\ell-\ep}}
      \quad&\mbox{if}\ m=0,
    \\[4mm]
      \dfrac{1}{C}s_0^{-(3-\gamma)(\alpha+\ell-1)}\phi^{\alpha+\ell}(t) 
      -Cs_0^{3-\gamma-\frac{2}{n}}
      \quad&\mbox{if}\ m<0
    \end{cases}
  \end{align}
for all $t \in S_\phi$.
\end{lem}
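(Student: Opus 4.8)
The plan is to start from the lower bound for $\phi'$ in Lemma \ref{lemphi}, namely $\phi'(t) \ge I_1+I_2+I_3+I_4$, and to insert into it the four integral estimates \eqref{I1I2ineq}, \eqref{I3ineq} and \eqref{I4ineq} together with the key inequality \eqref{psiphiineq} linking $\psi$ and $\phi$. The first step is to \emph{choose $\gamma$}. Looking at the constraints that have already appeared --- \eqref{gammacondi}, \eqref{mcondi1}--\eqref{mcondi2}, \eqref{alkcondi} and \eqref{wcondi} --- one needs simultaneously $\gamma<1$, $\gamma<2-\frac{2}{n}$, $\gamma>2-(\alpha+\ell)$, $\gamma>2-\frac{\alpha+\ell}{\kappa}$, and (when $m\ge0$) $\gamma<2-\frac{2}{n}\cdot\frac{\alpha+\ell}{\alpha+\ell-m}$; in the case $m=0$ the analogous condition with $\ep$ in place of $m$. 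I would check that under \eqref{malkcondi1}--\eqref{malkcondi2} the open interval carved out by these inequalities is nonempty: the lower endpoint is $\max\{2-(\alpha+\ell),\,2-\frac{\alpha+\ell}{\kappa}\} = 2-\frac{\alpha+\ell}{\kappa}$ (since $\kappa>1$), and the hypotheses $\alpha+\ell>\frac{2}{n}\kappa$ (resp.\ $\alpha+\ell>m+\frac{2}{n}\kappa$) are exactly what forces $2-\frac{2}{n}\cdot\frac{\alpha+\ell}{\alpha+\ell-m}>2-\frac{\alpha+\ell}{\kappa}$ (resp.\ with $m$ replaced by $0$), so a valid $\gamma=\gamma(m,\alpha,\kappa,\ell)$ exists; in the $m=0$ branch one fixes such a $\gamma$ first and then picks $\ep>0$ small enough that the $\ep$-perturbed inequality still holds, which is possible by continuity.

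The second step is to \emph{combine the four estimates}. Adding \eqref{I1I2ineq}, \eqref{I3ineq}, \eqref{I4ineq} gives, for all $t\in S_\phi$,
\[
  \phi'(t) \ge C_1\psi(t) - C_2 s_0^{3-\gamma} - (\text{a }\psi^{\theta}\text{ term from }I_3) - C_3 s_0^{3-\gamma-\frac{2}{n}}(\text{for }I_3) - C_4 s_0^{(3-\gamma)\frac{\alpha+\ell-\kappa}{\alpha+\ell}}\psi^{\frac{\kappa}{\alpha+\ell}}(t),
\]
where the exponent $\theta$ is $\frac{m}{\alpha+\ell}$, $\frac{\ep}{\alpha+\ell}$, or absent according to the sign of $m$. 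Since $\frac{m}{\alpha+\ell}$, $\frac{\ep}{\alpha+\ell}$ and $\frac{\kappa}{\alpha+\ell}$ are all strictly less than $1$ (the last by \eqref{alkcondi}), Young's inequality absorbs each $\psi^{\theta}$ term into $\tfrac12 C_1\psi(t)$ at the cost of an additive constant that is a power of $s_0$; one must track these powers carefully. For the $I_4$ term, Young's inequality with exponents $\frac{\alpha+\ell}{\kappa}$ and $\frac{\alpha+\ell}{\alpha+\ell-\kappa}$ yields an absorbing $\tfrac14 C_1\psi(t)$ plus a constant times $s_0$ raised to $(3-\gamma)\frac{\alpha+\ell-\kappa}{\alpha+\ell}\cdot\frac{\alpha+\ell}{\alpha+\ell-\kappa} = 3-\gamma$; similarly the $I_3$ correction produces a power $s_0$ to $\bigl[(3-\gamma)\frac{\alpha+\ell-m}{\alpha+\ell}-\frac{2}{n}\bigr]\cdot\frac{\alpha+\ell}{\alpha+\ell-m} = 3-\gamma-\frac{2}{n}\cdot\frac{\alpha+\ell}{\alpha+\ell-m}$ in the $m>0$ case (and with $\ep$ for $m=0$). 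After absorption we are left with $\phi'(t) \ge \frac{C_1}{4}\psi(t) - C_5 s_0^{\beta}$ for an appropriate exponent $\beta$ equal to the minimum of the several $s_0$-powers that appeared; one checks that, for $s_0\le \frac{R^n}{4}$ bounded, this minimum is the exponent written on the right-hand side of \eqref{odiphi} (the $-\frac{2}{n}\cdot\frac{\alpha+\ell}{\alpha+\ell-m}$ term dominates because it is the smallest, hence gives the largest power of a small $s_0$... actually since $s_0$ is small, the \emph{smallest} exponent gives the \emph{largest} term, so one should keep the worst, i.e.\ the stated one, possibly after noting $3-\gamma-\frac{2}{n}\cdot\frac{\alpha+\ell}{\alpha+\ell-m}\le 3-\gamma$). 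Finally, apply Lemma \ref{psiphi}: $\psi(t)\ge C_6 s_0^{-(3-\gamma)(\alpha+\ell-1)}\phi^{\alpha+\ell}(t)$, which converts $\frac{C_1}{4}\psi(t)$ into the advertised leading term $\frac1C s_0^{-(3-\gamma)(\alpha+\ell-1)}\phi^{\alpha+\ell}(t)$, and collect all constants into a single $C=C(R,m,\alpha,\mu,\kappa,\ell,L,\delta,\gamma,C_D,C_S)$.

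The main obstacle, I expect, is the \emph{bookkeeping of the $s_0$-exponents}: one must verify that after every application of Young's inequality the resulting power of $s_0$ is no smaller than the one claimed in \eqref{odiphi}, so that (for $s_0$ small) the negative term is genuinely dominated by the stated one; this is where the sign analysis of $m$ and the precise choice of $\gamma$ interact, and where the case distinction $m>0$ / $m=0$ / $m<0$ really bites. A secondary subtlety is that the constant $C_1$ in \eqref{I1I2ineq} must be strictly positive independently of $s_0$ (it is, by its stated dependence $C_1=C_1(\alpha,\ell,L,C_S)$), so that the absorbed fractions $\tfrac14 C_1,\tfrac14 C_1$ leave a fixed positive multiple of $\psi$; and one should double-check that the hypothesis $\alpha+\ell>1$ needed by Lemmas \ref{lemI4}, \ref{wpsi}, \ref{psiphi} is implied by \eqref{malkcondi1}--\eqref{malkcondi2} (indeed $\alpha+\ell>\kappa>1$). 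Once the exponent comparisons are settled, the rest is a mechanical combination.
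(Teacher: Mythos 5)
Your proposal is correct and follows essentially the same route as the paper: choose $\gamma$ in the interval $\bigl(2-\frac{\alpha+\ell}{\kappa},\,2-\frac{2}{n}\cdot\frac{\alpha+\ell}{\alpha+\ell-m}\bigr)$ (or the $\ep$-perturbed/$m$-free variants), feed the four estimates of Lemmas \ref{lemphi}--\ref{lemI4} into $\phi'(t)\ge I_1+I_2+I_3+I_4$, absorb the $\psi^{m/(\alpha+\ell)}$ and $\psi^{\kappa/(\alpha+\ell)}$ terms into $\psi$ by Young's inequality, bound all remaining powers of $s_0$ by the smallest exponent using $s_0\le R^n/4$, and finally invoke Lemma \ref{psiphi} to convert $\psi$ into $\phi^{\alpha+\ell}$; the exponent bookkeeping you flag as the main obstacle is exactly what the paper carries out, and your computation of the Young exponents (yielding $3-\gamma-\frac{2}{n}\cdot\frac{\alpha+\ell}{\alpha+\ell-m}$ from $I_3$ and $3-\gamma$ from $I_4$) matches the paper's.
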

\begin{proof}
By virtue of \eqref{malkcondi1}, it follows that if $m\ge0$, then
  \begin{align}\label{takeg}
    \left(2-\frac{2}{n}\cdot\frac{\alpha+\ell}{\alpha+\ell-m}\right)
    - \left(2-\frac{\alpha+\ell}{\kappa}\right)
    &= (\alpha+\ell)\left(\frac{1}{\kappa}-\frac{2}{n}\cdot\frac{1}{\alpha+\ell-m}\right)
    \\ \notag
   &> (\alpha+\ell)\left(\frac{1}{\kappa}-\frac{2}{n}\cdot\frac{n}{2\kappa}\right)=0.
  \end{align}
Thus, in the case $m\ge1$ we can find $\gamma\in(-\infty,1)$ such that
  \begin{align}\label{malkg1}
    2-\frac{\alpha+\ell}{\kappa}<\gamma<2-\frac{2}{n}\cdot\frac{\alpha+\ell}{\alpha+\ell-m}.
  \end{align}
Thanks to \eqref{malkcondi1} and \eqref{malkg1}, we know that 
\eqref{gammacondi}, \eqref{alcondi}, \eqref{mcondi1}, \eqref{alkcondi} and \eqref{wcondi} hold.
In the case $m>0$, applying Lemmas \ref{lemphi}--\ref{lemI4}, we see that there exist 
$c_1=c_1(\alpha,\ell,L,C_S)>0$ and 
$c_2=c_2(R,m,\alpha,\mu,\kappa,\ell,L,\delta,\gamma,C_D,C_S)>0$
such that
  \begin{align}\label{phi1}
    \phi'(t) &\ge
    c_1\psi(t) - c_2 s_0^{3-\gamma}
    - c_2 s_0^{(3-\gamma)\frac{\alpha+\ell-m}{\alpha+\ell}-\frac{2}{n}} 
           \psi^{\frac{m}{\alpha+\ell}}(t)
    - c_2 s_0^{3-\gamma-\frac{2}{n}}
    \\ \notag
    &\quad\,
    - c_2 s_0^{(3-\gamma)\frac{\alpha+\ell-\kappa}{\alpha+\ell}}
           \psi^\frac{\kappa}{\alpha+\ell}(t)
  \end{align}
for all $t \in S_\phi$.
Noting that $\alpha+\ell>m$ and $\alpha+\ell>\kappa$, from Young's inequality we can take $c_3>0$ and $c_4>0$ such that
  \begin{align*}
    c_2 s_0^{(3-\gamma)\frac{\alpha+\ell-m}{\alpha+\ell}-\frac{2}{n}} 
    \psi^{\frac{m}{\alpha+\ell}}(t)
    &\le \frac{c_1}{4}\psi(t) 
         + c_3s_0^{3-\gamma-\frac{2}{n}\cdot\frac{\alpha+\ell}{\alpha+\ell-m}}
  \intertext{and}
    c_2 s_0^{(3-\gamma)\frac{\alpha+\ell-\kappa}{\alpha+\ell}}
    \psi^\frac{\kappa}{\alpha+\ell}(t)
    &\le \frac{c_1}{4}\psi(t) 
         + c_4s_0^{3-\gamma}.
  \end{align*}
In light of \eqref{phi1}, we obtain that
  \[
    \phi'(t) \ge
    \frac{c_1}{2}\psi(t) 
    - c_2 s_0^{3-\gamma-\frac{2}{n}\cdot\frac{\alpha+\ell}{\alpha+\ell-m}}
    \left(
            s_0^{\frac{2}{n}\cdot\frac{\alpha+\ell}{\alpha+\ell-m}}
            + \frac{c_3}{c_2}
            + s_0^\frac{m}{\alpha+\ell-m}
            + \frac{c_4}{c_2}s_0^{\frac{2}{n}\cdot\frac{\alpha+\ell}{\alpha+\ell-m}}
    \right)   
  \]
for all $t \in S_\phi$. 
By using $s_0\le\frac{R^n}{4}$, we can verify that there exists $c_5>0$ such that
  \[
    \phi'(t) \ge
    \frac{c_1}{2}\psi(t) 
    - c_5 s_0^{3-\gamma-\frac{2}{n}\cdot\frac{\alpha+\ell}{\alpha+\ell-m}}
  \]
for all $t \in S_\phi$.
Moreover, we have from Lemma \ref{psiphi} that there exists $c_6>0$ such that
  \[
    \phi'(t) \ge
    \frac{c_1c_6}{2}s_0^{-(3-\gamma)(\alpha+\ell-1)}\phi^{\alpha+\ell}(t)
    - c_5 s_0^{3-\gamma-\frac{2}{n}\cdot\frac{\alpha+\ell}{\alpha+\ell-m}}
  \]
for all $t \in S_\phi$, which implies \eqref{odiphi} in the case $m>0$.
As to the case $m=0$, due to \eqref{takeg}, we can pick $\ep>0$ small enough and $\gamma\in(-\infty,1)$ such that
  \[
    2-\frac{\alpha+\ell}{\kappa}<\gamma
    <2-\frac{2}{n}\cdot\frac{\alpha+\ell}{\alpha+\ell-\ep}.
  \]
Therefore, using Lemmas \ref{lemphi}--\ref{lemI4}, we establish that there exist 
$c_7=c_7(\alpha,\ell,L,C_S)>0$ and 
$c_8=c_8(R,\alpha,\mu,\kappa,\ell,L,\delta,\gamma,C_D,C_S)>0$ 
such that
  \[
    \phi'(t) \ge
    c_7\psi(t) - c_8 s_0^{3-\gamma}
    - c_8 s_0^{(3-\gamma)\frac{\alpha+\ell-\ep}{\alpha+\ell}-\frac{2}{n}} 
           \psi^{\frac{\ep}{\alpha+\ell}}(t)
    - c_8 s_0^{3-\gamma-\frac{2}{n}}
    - c_8 s_0^{(3-\gamma)\frac{\alpha+\ell-\kappa}{\alpha+\ell}}
           \psi^\frac{\kappa}{\alpha+\ell}(t)
  \]
for all $t \in S_\phi$.
As in the case $m>0$, from this inequality we can attain \eqref{odiphi}.
Finally, in the case $m<0$ we see from \eqref{malkcondi2} that
  \begin{align*}
    \left(2-\frac{2}{n}\right)
    - \left(2-\frac{\alpha+\ell}{\kappa}\right)
    = \frac{\alpha+\ell}{\kappa}-\frac{2}{n}
    > \frac{1}{\kappa}\cdot\frac{2\kappa}{n}-\frac{2}{n}=0.
  \end{align*}
Thus we can take $\gamma\in(-\infty,1)$ satisfying
  \[
    2-\frac{\alpha+\ell}{\kappa}<\gamma<2-\frac{2}{n}.
  \]
By virtue of Lemmas \ref{lemphi}--\ref{lemI4} we know that there exist $c_9=c_9(\alpha,\ell,L,C_S)>0$ and 
$c_{10}=c_{10}(R,m,\alpha,\mu,\kappa,\ell,L,\delta,\gamma,C_D,C_S)>0$ such that
  \[
    \phi'(t) \ge
    c_9\psi(t) - c_{10} s_0^{3-\gamma}
    - c_{10} s_0^{3-\gamma-\frac{2}{n}}
    - c_{10} s_0^{(3-\gamma)\frac{\alpha+\ell-\kappa}{\alpha+\ell}}
           \psi^\frac{\kappa}{\alpha+\ell}(t)
  \]
for all $t \in S_\phi$.
By an argument similar to that in the case $m>0$, we can verify that \eqref{odiphi} holds in the case $m<0$.
\end{proof}

We are in a position to complete the proof of Theorem \ref{thm2}.

\begin{proof}[Proof of Theorem \ref{thm2}]
We first consider the case $m>0$. Due to \eqref{thm2condi1}, we can obtain from Lemma \ref{lemodi} that there exist $\gamma\in(-\infty,1)$, $c_1>0$ and $c_2>0$ such that for each $u_0$ satisfying \eqref{u0} and $s_0\le\frac{R^n}{4}$, it follows that
  \begin{align}\label{firstphiesti}
    \phi'(t)\ge
    c_1s_0^{-(3-\gamma)(\alpha+\ell-1)}\phi^{\alpha+\ell}(t) 
    -c_2s_0^{3-\gamma-\frac{2}{n}\cdot\frac{\alpha+\ell}{\alpha+\ell-m}}
  \end{align}
for all $t\in S_\phi$.
Next we choose $s_0\le\frac{R^n}{4}$ small enough such that
  \begin{align}\label{s0M0}
    s_0 \le \frac{M_0}{2}
  \end{align}
and
  \begin{align}\label{s01}
    s_0^{(\alpha+\ell)\left(1-\frac{2}{n}\cdot\frac{1}{\alpha+\ell-m}\right)}
    \le \frac{c_1}{2c_2}\left(\frac{M_0}{2(1-\gamma)(2-\gamma)\omega_n}\right)
         ^{\alpha+\ell}.
  \end{align}
Furthermore, we fix $\ep_0\in\left(0,\frac{s_0}{2}\right)$ so small and take $s_\star \in (0,s_0)$ fulfilling
  \begin{align}\label{lower}
    \frac{M_0-\ep_0}{\omega_n}\int^{s_0}_{s_\star}s^{-\gamma}(s_0-s)\,ds
    > \frac{M_0-s_0}{(1-\gamma)(2-\gamma)\omega_n}s_0^{2-\gamma}.
  \end{align}
We define $r_\star:=s_\star^\frac{1}{n}\in(0,R)$ and suppose that $u_0$ satisfies \eqref{u0} and \eqref{u0mass}.
In order to show $\tmax<\infty$, assuming that $\tmax=\infty$, we will derive a contradiction.
We set 
  \begin{align}\label{setS}
    \widetilde{S}:=\left\{T\in(0,\infty) \;\middle|\; \phi(t)>\frac{M_0-s_0}{(1-\gamma)(2-\gamma)\omega_n}s_0^{2-\gamma}\quad\mbox{for all}\ t\in[0,T]\right\}.
  \end{align}
Here, we note that $\widetilde{S}$ is not empty.
Indeed, since we have that for any $s\in(s_\star,R^n)$
  \[
    w(s,0)\ge w(s_\star,0)=\frac{1}{\omega_n}\int_{B_{r_\star}(0)}u_0\,dx
    \ge \frac{M_0-\ep_0}{\omega_n},
  \]
we see from \eqref{lower} that
  \begin{align*}
    \phi(0)&\ge \int^{s_0}_{s_\star} s^{-\gamma}(s_0-s)w(s,0)\,ds
  \\
    &\ge \frac{M_0-\ep_0}{\omega_n} \int^{s_0}_{s_\star} s^{-\gamma}(s_0-s)\,ds
  \\
    &> \frac{M_0-s_0}{(1-\gamma)(2-\gamma)\omega_n}s_0^{2-\gamma}.
  \end{align*}
Thus we can put $\widetilde{T}:=\sup \widetilde{S}\in(0,\infty)$.
Moreover, we know that $(0,\widetilde{T})\subset S_\phi$.
Owing to \eqref{setS} and \eqref{s0M0}, we establish that
  \begin{align*}
    \phi(t)\ge\frac{M_0}{2(1-\gamma)(2-\gamma)\omega_n}s_0^{2-\gamma}
  \end{align*}
for all $t\in(0,\widetilde{T})$.
From \eqref{s01} it follows that
  \begin{align*}
    \frac{\frac{c_1}{2}s_0^{-(3-\gamma)(\alpha+\ell-1)}\phi^{\alpha+\ell}(t)}
             {c_2s_0^{3-\gamma-\frac{2}{n}\cdot\frac{\alpha+\ell}{\alpha+\ell-m}}}
    \ge
    \frac{c_1}{2c_2}\left(\frac{M_0}{2(1-\gamma)(2-\gamma)\omega_n}\right)^{\alpha+\ell}
    s_0^{-(\alpha+\ell)+\frac{2}{n}\cdot\frac{\alpha+\ell}{\alpha+\ell-m}}
    \ge1
  \end{align*}
for all $t \in (0,\widetilde{T})$, which implies from \eqref{firstphiesti} that
  \begin{align}\label{key}
    \phi'(t)\ge\frac{c_1}{2}s_0^{-(3-\gamma)(\alpha+\ell-1)}\phi^{\alpha+\ell}(t)\ge0
  \end{align}
for all $t \in (0,\widetilde{T})$.
This inequality yields that $\widetilde{T}=\infty$.
However, from \eqref{key} and $\alpha+\ell-1>0$ we can show that
  \[
    \widetilde{T}\le\frac{2}{(\alpha+\ell-1)c_1}s_0^{(3-\gamma)(\alpha+\ell-1)}.
  \]
As a consequence, we attain that $\tmax$ must be finite.
In the cases $m=0$ and $m<0$, we can prove that $\tmax<\infty$ by an argument similar to that in the case $m>0$.
\end{proof}
%
\subsection{Open problems}\label{open}
In \cite{F_2021_optimal,Wang-Li,W-2018_nonlinear} the critical values such that solutions remain bounded or blow up in finite time were derived.
With regard to the conditions \eqref{thm1condi1}, \eqref{thm2condi1} and \eqref{thm2condi2}, we see that if $n\ge3$ and $m\ge0$ as well as $\frac{n}{n-2}m\le\kappa$, then 
\[
    \max\left\{m+\frac{2}{n},\kappa\right\}
    =\max\left\{m+\frac{2}{n}\kappa,\kappa\right\}=\kappa.
\]
Thus we know that the critical value is $\alpha+\ell=\kappa$ in this case.
However, in the cases that $n\in\{1,2\}$ and that $n\ge3$ and $m\ge0$ as well as $\frac{n}{n-2}m>\kappa$, the conditions \eqref{thm1condi1}, \eqref{thm2condi1} and \eqref{thm2condi2} are not optimal.
Moreover, the special cases are as follows:
\begin{itemize}
\item In the case that $m=\alpha=1$, behavior of solutions is an open problem when $\max\left\{\frac{2}{n},\kappa-1\right\}\le \ell \le\frac{2}{n}\kappa$ (see Figures \ref{kln12fig} and \ref{klfig}).

\begin{figure}[H]\label{kln12}
\begin{minipage}{0.52\columnwidth}
\begin{center}
\hspace*{5mm}
\scalebox{1.03}{\includegraphics{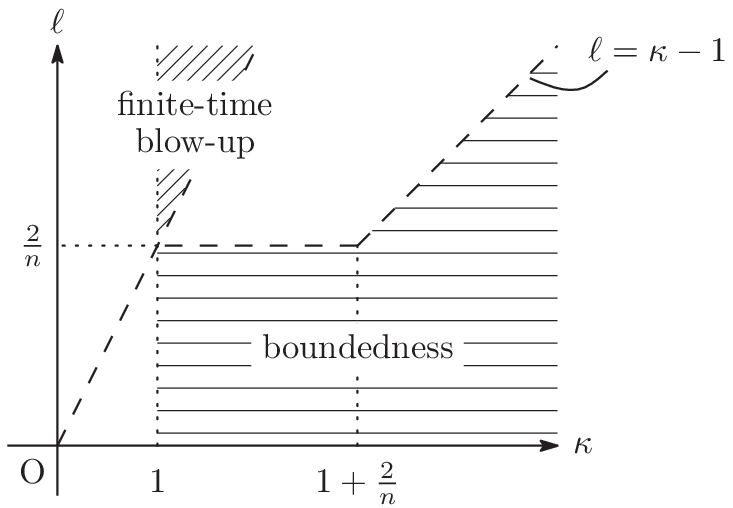}}
\caption{$n\in\{1,2\}$ and $m=\alpha=1$}
\label{kln12fig}
\end{center}
\end{minipage}
\begin{minipage}{0.47\columnwidth}
\begin{center}
\scalebox{1.03}{\includegraphics{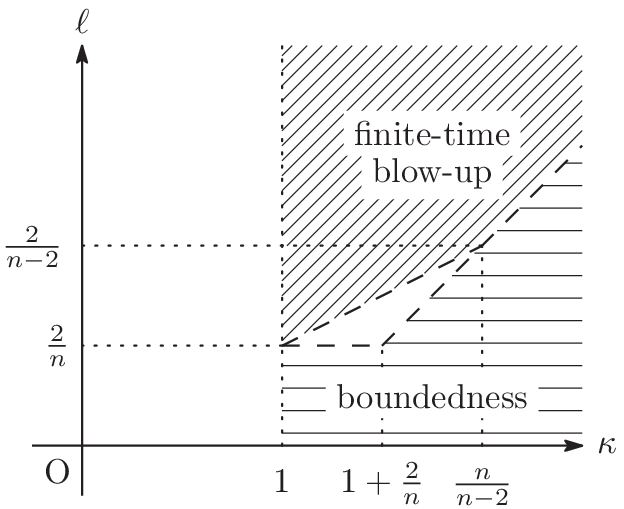}}
\caption{$n\ge3$ and $m=\alpha=1$}
\label{klfig}
\end{center}
\end{minipage}
\end{figure}

\item In the case that $m=1$ and $\kappa<\frac{n}{(n-2)_{+}}$, from Figure \ref{alfig} we have an open question of whether solutions remain bounded or blow up when 
$\max\left\{1+\frac{2}{n},\kappa\right\}\le \alpha+\ell \le 1+\frac{2}{n}\kappa$.

\begin{figure}[H]
\begin{center}
\hspace*{-2.2cm}
\scalebox{1.03}{\includegraphics{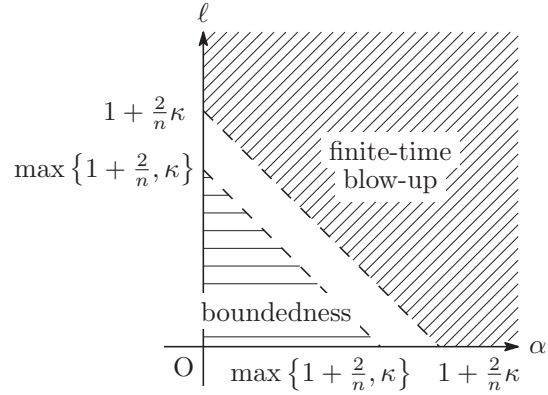}}
\caption{$m=1$ and $\kappa<\frac{n}{(n-2)_{+}}$}
\label{alfig}
\end{center}
\end{figure}

\item In the case that $\alpha=1$ and $\ell>0$, there is an open problem for behavior of solutions
when $n=1$ and 
$\max\{\kappa-1,m+1\} \le \ell \le \max\{2\kappa-1,m+2\kappa-1\}$.
Also, the same question exists when $n\ge2$ and 
$\max\left\{\kappa-1,m-\left(1-\frac{2}{n}\right)\right\} \le \ell \le m-\left(1-\frac{2}{n}\kappa\right)$.
Moreover, in the case that $\alpha>0$ and $\ell=1$, we obtain regions that $\ell$ is replaced by $\alpha$ in Figures \ref{mln1fig} and \ref{mlfig}.
 
\begin{figure}[H]
\hspace*{4mm}
\begin{minipage}{0.45\columnwidth}
\begin{center}
\scalebox{1.03}{\includegraphics{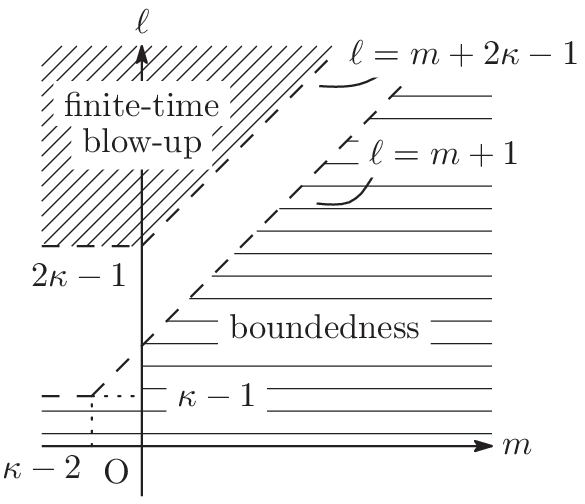}}
\caption{$n=1$ and $\alpha=1$}
\label{mln1fig}
\end{center}
\end{minipage}
\hspace*{-9mm}
\begin{minipage}{0.55\columnwidth}
\begin{center}
\scalebox{1.03}{\includegraphics{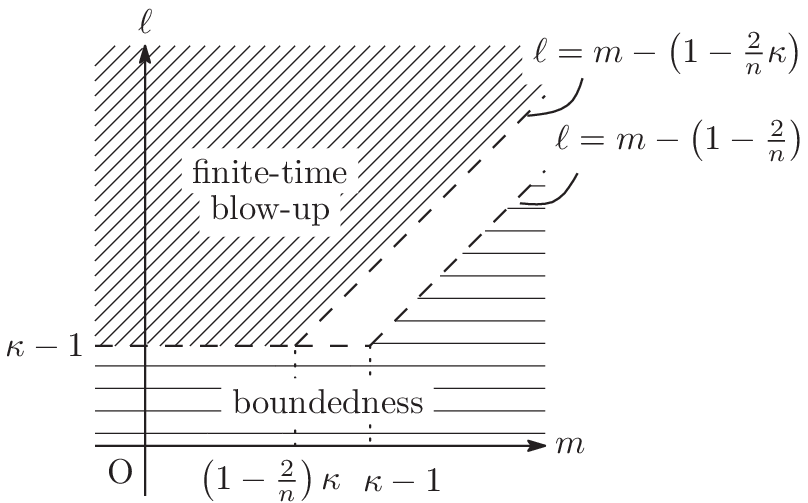}}
\hspace*{-7mm}
\caption{$n\ge2$ and $\alpha=1$}
\label{mlfig}
\end{center}
\end{minipage}
\end{figure}
\end{itemize}

%
\smallskip
\section*{Acknowledgments}
The author would like to thank Professor Tomomi Yokota for his encouragement and helpful comments on the manuscript.
%

\bibliographystyle{plain}

\end{document}